\newtheorem{theorem}{Theorem}
\newtheorem{lemma}[theorem]{Lemma}
\newtheorem{corollary}[theorem]{Corollary}
\newtheorem{proposition}[theorem]{Proposition}
\newtheorem{conjecture}[theorem]{Conjecture}
\theoremstyle{definition}
\newtheorem{example}[theorem]{Example}
\definecolor{antiquebronze}{rgb}{0.4,0.36,0.12}
\definecolor{revisionBlue}{rgb}{0,0,0}
\definecolor{red}{rgb}{1,0,0}
\providecommand{\mathbold}[1]{\bm{#1}}
\newcommand{\vct}[1]{\bm{#1}}
\newcommand{\mtx}[1]{\mathbold{#1}}
\newcommand{\R}{\mathbb{R}}
\newcommand{\N}{\mathbb{N}}
\newcommand{\Q}{\mathbb{Q}}
\newcommand{\evec}{\vct{a}}
\newcommand{\emat}{\mtx{A}}
\newcommand{\eentry}{a}
\newcommand{\newpoly}[1]{\mathcal{P}(#1)}
\newcommand{\cnns}[1]{\mathsf{C_{NNS}}(#1)}
\newcommand{\cnnp}[1]{\mathsf{C_{NNP}}(#1)}
\newcommand{\csage}[1]{\mathsf{C_{SAGE}}(#1)}
\newcommand{\cage}[1]{\mathsf{C_{AGE}}(#1)}
\newcommand{\cpolyage}[1]{\mathsf{C_{AGE}^{POLY}}(#1)}
\newcommand{\cpolysage}[1]{\mathsf{C_{SAGE}^{POLY}}(#1)}
\newcommand{\relent}[2]{D\left(#1, #2\right)}
\newcommand{\sagerelax}[2]{{#1}_{\mathsf{SAGE}}^{(#2)}}
\newcommand{\constrsagedual}[2]{{(#1, #2)}_{\mathsf{SAGE}}^{}}
\DeclareMathOperator{\supp}{supp}
\DeclareMathOperator{\Sig}{\mathsf{Sig}}
\DeclareMathOperator{\Poly}{\mathsf{Pol}}
\DeclareMathOperator{\conv}{conv}
\DeclareMathOperator{\cone}{co}
\DeclareMathOperator{\ext}{ext}
\DeclareMathOperator{\sint}{int}
\DeclareMathOperator{\cl}{cl}
\title{Newton Polytopes and Relative Entropy Optimization}
\author{Riley Murray$^\dag$, Venkat Chandrasekaran$^{\dag,\ddag}$, and Adam Wierman$^\dag$ \thanks{Email: rmurray@caltech.edu, venkatc@caltech.edu, adamw@caltech.edu} \vspace{0.25in} \\ $^\dag$ Department of Computing and Mathematical Sciences\\ $^\ddag$ Department of Electrical Engineering \\ California Institute of Technology \\ Pasadena, CA 91125}
\date{October 2, 2018; revised May 12, 2020}
\begin{document}

\maketitle



\begin{abstract}
	
	\vspace{1em}
    
    {\color{revisionBlue}Certifying function nonnegativity is a ubiquitous problem in computational mathematics, with especially notable applications in optimization.  We study the question of certifying nonnegativity of signomials based on the recently proposed approach of Sums-of-AM/GM-Exponentials (SAGE) decomposition due to the second author and Shah.  The existence of a SAGE decomposition is a sufficient condition for nonnegativity of a signomial, and it can be verified by solving a tractable convex relative entropy program.  We present new structural properties of SAGE certificates such as a characterization of the extreme rays of the cones associated to these decompositions as well as an appealing form of sparsity preservation.  These lead to a number of important consequences such as conditions under which signomial nonnegativity is equivalent to the existence of a SAGE decomposition; our results represent the broadest-known class of nonconvex signomial optimization problems that can be solved efficiently via convex relaxation.  The analysis in this paper proceeds by leveraging the interaction between the convex duality underlying SAGE certificates and the face structure of Newton polytopes.  While our primary focus is on signomials, we also discuss how our results provide efficient methods for certifying polynomial nonnegativity, with complexity independent of the degree of a polynomial.}
	
	\vspace{1em}
	\noindent \textbf{Keywords:} arithmetic-geometric-mean inequality, certifying nonnegativity, fewnomials, SAGE, signomials, sparse polynomials
\end{abstract}

\clearpage

\section{Introduction}

{\color{revisionBlue}The problem of certifying function nonnegativity is broadly applicable
in the mathematical sciences.  Optimization provides an especially notable example: since $f^\star = \inf_{\vct{x} \in \R^n} f(\vct{x}) $ can be expressed as $f^\star = \sup\{ \gamma \,:\, f(\vct{x}) - \gamma \geq 0 ~ \forall \vct{x} \in \R^n\}$, any certificate that $f - \gamma$ is nonnegative on $\R^n$ is a proof that $\gamma$ is a lower bound on $f^\star$.  Other tasks that can framed as nonnegativity certification problems include deciding stability of a dynamical system, bounding Lipschitz constants, or determining the consistency of nonlinear equations.
It is computationally intractable to decide function nonnegativity in general, but we may yet seek \textit{sufficient conditions} for nonnegativity which are tractable to certify.
This article is concerned with the Sums-of-AM/GM-Exponentials or ``SAGE'' certificates of signomial nonnegativity \cite{CS16}.

Signomials are functions of the form $\vct{x} \mapsto \sum_{i=1}^m c_i \exp(\evec_i^\intercal \vct{x})$, where $\vct{c} \in \R^m$ is the coefficient vector and the $\evec_i$ are called \textit{exponent vectors}.
Signomial nonnegativity is NP-Hard to decide by reduction to matrix copositivity \cite{Murty1987}.
Global signomial optimization is tractable in the special case of convex \textit{geometric programming} \cite{Boyd2007}, however high-fidelity signomial models often do not fit into this framework \cite{Dembo1978-sig-ChemE, KajMikael2002-sig-ChemE,jabr2007-sig-EE,Kwan2007-sig-EE,Kirschen2018b-sig-aero,Hall2018-sig-aero}.
Our study of SAGE certificates is motivated by desires to better understand signomials as a fundamental class of functions, and to provide greater ability to cope with the nonconvex signomial programs which fall outside the scope of geometric programming.

There are two perspectives with which one can look at SAGE certificates.
On the one hand, there is an interpretation where the summands are certified as nonnegative by a weighted AM/GM inequality.
Equivalently, the summands may be certified as nonnegative by the principle of strong duality as applied to a suitable convex program.
The connection to convex duality is essential, as it provides a way to tractably represent the SAGE cone via the \textit{relative entropy function}
\[
\relent{\vct{\nu}}{\vct{\lambda}} = \textstyle\sum_{i=1}^m \nu_i \ln(\nu_i / \lambda_i), ~~~ \vct{\nu},\vct{\lambda} \in \mathbb{R}^m_+.
\]
Over the course of this article we employ convex analysis and convex geometry in an effort to better understand fundamental aspects of SAGE certificates.
The convex-analytic arguments are used to establish structural results concerning the SAGE cone itself-- for example, the nature of its extreme rays, and sparsity-preservation properties of SAGE decompositions.
The convex-geometric considerations come into play when we ask precisely how the SAGE cone relates to the nonnegativity cone.
Here we use \textit{Newton polytopes}, which are convex hulls of the exponents $\{\evec_i\}_{i=1}^m$, often jointly with consideration to the sign pattern of a signomial's coefficient vector.

Analysis by Newton polytopes has a longer history in study of polynomials, and in particular in the study of sparse polynomials.
Prominent examples in this area include Khovanskii's fewnomials \cite{fewnomialsFirst,fewnomialsBook}, Reznick's agiforms \cite{extremalPSDFormsWithFewTerms,Reznick1989}, and Bajar and Stein's work on polynomial coercivity \cite{NewtonCoercivePolys}.
Many such works ``signomialize'' polynomials via a substitution $x_j \leftarrow \exp y_j$ in certain intermediate proofs.
We adopt a different perspective, where signomials are the first-class object.
This perspective allows us to properly study signomials as a class of functions distinct from polynomials, while also providing a transparent mechanism to obtain polynomial results down the line.
}  

\subsection{Our contributions}
{\color{revisionBlue}
Section \ref{sec:SAGE} briefly reviews relevant background on SAGE certificates for signomial nonnegativity.

In Section \ref{sec:SAGEcontributions} we prove a number of new structural properties of SAGE certificates.  Theorem \ref{thm:restrictSS} proves an important sparsity-preserving property of SAGE functions.
If a signomial $f$ is SAGE, then there exists a decomposition $f = \sum_k f_k$ that certifies this property (i.e., each $f_k$ is an ``AGE function''; certifiably nonnegative by a relative entropy inequality, as described in Section \ref{sec:SAGE}), such that each $f_k$ consists only of those terms that appear in $f$.
Furthermore, the process of summing the $f_k$ to obtain $f$ results in no cancellation of coefficients on basis functions $\vct{x} \mapsto \exp(\evec_i^\intercal \vct{x})$.
Theorem \ref{thm:extreme_rays} goes on to provide a characterization of the extreme rays of the cone of SAGE functions; in particular, all nontrivial extreme rays are given by AGE functions that are supported on simplicial Newton polytopes.

Section \ref{sec:structureSAGEandNNG} leverages the understanding from Section \ref{sec:SAGEcontributions} to derive a collection of structural results which describe
when nonnegative signomials are SAGE, with the Newton polytope being the primary subject of these theorems' hypotheses.
Theorem \ref{thm:linear_indep_and_all_nonext_neg_is_exact} is concerned with cases where the Newton polytope is simplicial, while Theorems \ref{thm:needNonnegOfFaces} and \ref{thm:SageEqNng} concern when it ``decomposes'' in an appropriate sense.
Each of these theorems exhibits invariance under nonsingular affine transformations of the exponent vectors.
Corollaries \ref{cor:simplicial_optimization_zero_anywhere} and \ref{cor:constrained_case_exact} show how Theorem \ref{thm:linear_indep_and_all_nonext_neg_is_exact} applies to signomial optimization problems.
We conclude the section with a result on conditions under which SAGE can recognize signomials that are bounded below (Theorem \ref{thm:fsageisbounded1pluseps}).

In Section \ref{sec:PolyNNG} we specialize our results on signomials to polynomials, by defining a suitable ``signomial repesentative'' of a polynomial, and requiring that the signomial admit a SAGE decomposition.
The resulting class of ``SAGE polynomials'' inherits a tractable representation from the cone of SAGE signomials (Theorem \ref{thm:poly_sage_tractable}) as well as other structural properties on sparsity preservation and extreme rays (Corollaries \ref{cor:poly_sparse_decomp} and \ref{cor:extreme_rays_polynomial}).
Moving from a polynomial to a signomial representative is simple but somewhat delicate, yielding both stronger (Corollary \ref{corr:lin_indep_sage_poly}) and weaker (Corollary \ref{cor:nearlySagePolyEqNNP}) results than in the signomial case.
We then situate our results on polynomial SAGE certificates in the broader literature, with specific emphasis on ``Sums-of-Nonnegative-Circuits'' (SONC, see Sections \ref{subsec:amgm} and \ref{subsec:sonc}) and ``Sums-of-Squares'' (SOS, see Section \ref{subsec:sos}).
The section is concluded with discussion on how our results provide the basis for a sparsity-preserving hierarchy of convex relaxations for polynomial optimization problems.

Section \ref{sec:constructCounterExAndDualChar} demonstrates that there are meaningful senses in which our results from Section \ref{sec:structureSAGEandNNG} cannot be improved upon.
Through Theorem \ref{thm:S_alpha_subset_of_THING_is_exact}, we provide a novel dual characterization of conditions under which the SAGE cone and the cone of nonnegative signomials coincide.
}  

\subsection{Related work}

\subsubsection{Signomials and signomial optimization}
{\color{revisionBlue}
The literature on signomials is quite fragmented, owing to a wide range of conventions used for this class of functions across fields and over time.
A recurring theme is to consider signomials as functions $\vct{y} \mapsto \sum_{i=1}^m c_i \vct{y}^{\evec_i}$, where monomials $\vct{y}^{\evec_i} \doteq \prod_{j=1}^n y_j^{\eentry_{ij}}$ remain well-defined for real $\eentry_{ij}$ so long as $\vct{y}$ is restricted to the positive orthant.
In analysis of biochemical reaction networks, signomials are often called \textit{generalized polynomials} \cite{Mller2015,Mller2018}, or simply ``polynomials over the positive orthant'' \cite{Pantea2012}.
In amoeba theory one usually calls signomials \textit{exponential sums} \cite{Silipo2012,FdW2017}.
Much of the earlier optimization literature referred to signomial programming as ``generalized geometric programming,'' but this term now means something quite different \cite{Boyd2007}.

In the taxonomy of optimization problems, geometric programming is to signomial optimization what convex quadratic programming is to polynomial optimization.
Current approaches to global signomial optimization use successive linear or geometric programming approximations together with branch-and-bound \cite{sigopt1,sigopt2,sigopt3}.
Equality constrained signomial programs are often treated by penalty or augmented Lagrangian methods \cite{Rountree1982}, and are notoriously difficult to solve \cite{Yan1976,OCW2017}.
The SAGE approach to signomial optimization does not involve branch-and-bound, and does not entail added complexity when considering signomial equations instead of inequalities.}

\subsubsection{Sums-of-Squares certificates and polynomial optimization} \label{subsec:related-sos}

\textcolor{revisionBlue}{There is a large body of work on SOS certificates for polynomial nonnegativity, and the resulting convex relaxations for polynomial optimization problems \cite{parilloPhD,Lasserre2001,shor}.
Over the course of this article we make two contributions which have direct parallels in the SOS literature.}

\textcolor{revisionBlue}{Our results in Section \ref{sec:SAGEcontributions} are along the lines of David Hilbert's 1888 classification of the number of variables ``$n$'' and the degrees ``$2d$'' for which SOS-representability coincides with polynomial nonnegativity \cite{Hilbert1888}.
The granularity with which we seek such a classification is distinct from that in the SOS literature, as there is no canonical method to take finite-dimensional subspaces of the infinite-dimensional space of signomials.}

\textcolor{revisionBlue}{A principal drawback of the SOS method is that its canonical formulation requires a semidefinite matrix variable of order ${n + d \choose d}$ -- and the size of this matrix is exponential in the degree $d$.
In Section \ref{sec:PolyNNG} we use SAGE signomials to certify polynomial nonnegativity in a way which is unaffected by the polynomial's degree.
Section \ref{subsec:sos} compares our proposed method to SOS, as well as refinements and variations of SOS which have appeared in the literature: \cite{sparsesosDecideSupports2004,sparsesosChordal2006,sparsesosDemmel2008,SDSOS1}.
}  

\subsubsection{Certifying function nonnegativity via the AM/GM inequality} \label{subsec:related-amgm}

\textcolor{revisionBlue}{The ``AGE functions'' in a SAGE decomposition may be proven nonnegative in either of two ways.
The first approach is to certify a particular relative entropy inequality over a signomial's coefficients, which is described in Section \ref{sec:SAGE}.
This approach is known to be computationally tractable, 
and it provides a convenient tool for proving structural results for the set of SAGE certificates.
The second method is to find weights for an appropriate AM/GM inequality over a signomial's coefficients; this latter method directly connects SAGE to a larger literature on certifying function nonnegativity via the AM/GM inequality, which we summarize in the sequel.}

\textcolor{revisionBlue}{The earliest systematic theoretical studies in this area were undertaken by Reznick \cite{extremalPSDFormsWithFewTerms,Reznick1989} in the late 1970s and 1980s.
The first developments of any computational flavor came from P{\'{e}}bay, Rojas and Thompson in 2009 \cite{Pbay2009}, via their study of polynomial maximization.
P{\'{e}}bay et al. used tropical geometry and $\mathcal{A}$-discriminants to develop an AM/GM type certificate for boundedness of functions supported on matroid-theoretic circuits.
In this context, a function is supported on a circuit if the monomial exponents $\{\evec_i\,:\, c_i \neq 0 \}$ form a minimal affinely-dependent set.
In 2011 Ghasemi et al. pioneered the use of geometric programming to recognize functions which were certifiably nonnegative by the AM/GM inequality and a sums-of-binomial-squares representation \cite{GandM,GandMandL}.
In 2012, Paneta, Koeppl, and Craciun derived an AM/GM condition to certify $\R^n_+$-nonnegativity of polynomials supported on circuits \cite[Theorem 3.6]{Pantea2012}.
Follow-up work by August, Craciun, and Koeppl used \cite[Theorem 3.6]{Pantea2012} to determine invariant sets for biological dynamical systems \cite{August2012}.
A short while later, Iliman and de Wolff suggested taking sums of globally nonnegative circuit polynomials \cite{SONC1};
the resulting \textit{SONC polynomials} have since become an established topic in the literature \cite{SONC2,SONC3,SONCExp,wangAGEpolynomial,wangSoncSupports}.}

\textcolor{revisionBlue}{We continue to make connections to the AM/GM-certificate literature throughout this article;  \cite{Pantea2012,SONC1,SONC3,SONCExp} are revisited in Section \ref{subsec:amgm}, and \cite{SONC1,wangAGEpolynomial,wangSoncSupports} are addressed in Section \ref{subsec:sonc}.}

\subsection{Notation and conventions}

{\color{revisionBlue}Special sets include $[\ell] \doteq \{1,\ldots,\ell\} \subset \N$, the probability simplex $\Delta_\ell \subset \mathbb{R}^\ell$, the nonnegative reals $\R_+$, and the positive reals $\R_{++}$.}
Vectors and matrices always appear in boldface.
{\color{revisionBlue}For indexing reasons we find it useful to define certain vectors by writing $\vct{v} = (v_i)_{i\in[\ell]\setminus k}$; such a vector is said to belong to ``$\R^{[\ell]\setminus k}$.''
If we wish to drop a the $k^{\text{th}}$ component from a vector $\vct{v} \in \R^\ell$, we write $\vct{v}_{\setminus k} \doteq (v_i)_{i \in [\ell]\setminus k}$.
The \textit{support} of a vector $\vct{v} \in \R^\ell$ is $\supp \vct{v} \doteq \{ i \in [\ell] \,:\, v_i \neq 0 \}$.}
We use $\vct{e}_i$ to denote the $i^{\text{th}}$ standard basis vector in $\mathbb{R}^\ell$ {\color{revisionBlue}with $\ell$ inferred from context, and set $\vct{1} = \sum_{i=1}^\ell \vct{e}_i$}.
{\color{revisionBlue}The operator $\oplus$ is used for vector concatenation.
We often call finite a point-set $\{ \vct{u}_i \}_{i=1}^\ell$ \textit{simplicial} if it is affinely independent.}

The operators ``$\cl$'' and ``$\conv$'' 
{\color{revisionBlue} return}
a set's closure and convex hull respectively; ``$\ext$'' returns the extreme points of a compact convex set.  
We use $U + V$ to denote the Minkowski sum of sets $U$ and $V$ within $\mathbb{R}^\ell$. 
For any convex cone $K$ contained in $\mathbb{R}^\ell$ there is an associated \textit{dual cone} $K^\dagger \doteq \{ \vct{y} : \vct{y}^\intercal \vct{x} \geq 0 \text{ for all } \vct{x} \text{ in } K \}.$

We reserve $\emat$ for a real $n \times m$ matrix with \textcolor{revisionBlue}{distinct} columns $\{\evec_i\}_{i=1}^m$, and $\vct{c}$ for a vector in $\mathbb{R}^m$.
Writing $f = \Sig(\emat, \vct{c})$ means that $f$ takes values $f(\vct{x}) = \sum_{i=1}^m c_i \exp( \evec_i^\intercal \vct{x})$. 
For a fixed signomial $\Sig(\emat,\vct{c})$, we often refer to $\evec_i \in \mathbb{R}^n$ as an \textit{exponent vector}, and use $\newpoly{\emat}$ to denote its Newton polytope.
The cone of {\color{revisionBlue} coefficients for} nonnegative signomials over exponents $\emat$ is
\[
\cnns{\emat} = \{\, \vct{c} \,:\, \Sig(\emat,\vct{c})(\vct{x}) \geq 0 \text{ for all } \vct{x} \text{ in } \mathbb{R}^n \}.
\]
{\color{revisionBlue} We sometimes overload terminology and refer to $\cnns{\emat}$ as a cone of signomials, rather than a cone of coefficients.
We routinely work with matrices ``$\emat_{\setminus k}$'' formed by deleting some $k^{\text{th}}$ column from $\emat$.
The matrix $\emat_{\setminus k}$ is applied to a vector $\vct{\nu} \in \R^{[m] \setminus k}$ by $\emat_{\setminus k} \vct{\nu} = \sum_{i \in [m]\setminus \{k\}} \evec_i \nu_i$.}

\section{Background Theory on SAGE Functions}\label{sec:SAGE}

In their debut, SAGE functions were used as a building block for a hierarchy of convex relaxations to challenging nonconvex signomial optimization problems \cite{CS16}. 
Underlying this entire hierarchy were the simple facts that SAGE functions are globally nonnegative, and efficiently recognizable. 
The purpose of this section is to review the theory of SAGE functions to the extent that it is needed for subsequent development. 

Section \ref{subsec:age} introduces the idea of an \textit{AGE function}, which serve as the building blocks of SAGE functions. The way in which AGE functions extend to SAGE functions is given in Section \ref{subsec:sum_of_age}.
Section \ref{subsec:nng_to_opt} describes the connection between nonnegativity and optimization in the context of SAGE relaxations. These three sections {\color{revisionBlue}are essential for} understanding \textcolor{revisionBlue}{the present} article.

\subsection{
AM/GM Exponentials
}\label{subsec:age}
We need some additional structure to make it easier to verify membership in the nonnegativity cone $\cnns{\emat}$. 
The structure used by Chandrasekaran and Shah \cite{CS16} was that the coefficient vector $\vct{c}$ contained at most one negative entry $c_k$; if such a function was globally nonnegative, they called it an \textit{AM/GM Exponential}, or an \textit{AGE function}.
To facilitate \textcolor{revisionBlue}{the} study of such functions, \cite{CS16} defines the \textit{$k^{\text{th}}$ AGE cone}
\begin{equation}
\cage{\emat,k} = \{  \vct{c} : \vct{c}_{\setminus k} \geq \vct{0} \text{ and } \vct{c} \text{ belongs to } \cnns{\emat}    \}. \label{eq:defAGE}
\end{equation}
\textcolor{revisionBlue}{Elements of $\cage{\emat,k}$ are sometimes called \textit{AGE vectors}.} 

{\color{revisionBlue}
It is evident that $\cage{\emat,i}$ is a proper convex cone which contains the nonnegative orthant.
By using a convex duality argument, one may show that} a vector $\vct{c}$ with $\vct{c}_{\setminus k} \geq \vct{0}$ belongs to $\cage{\emat,k}$ if and only if
{\color{revisionBlue}
\begin{equation}
\text{some} \quad \vct{\nu} \in \mathbb{R}^{[m]\setminus k}_{+} \quad \text{satisfies} \quad [\emat_{\setminus k} - \evec_k \vct{1}^\intercal]\vct{\nu} = \vct{0} \quad \text {and}  \quad  \relent{\vct{\nu}}{e \vct{c}_{\setminus k}}  \leq c_k.
\label{eq:ageRelEnt}
\end{equation}}
It is crucial that the representation in \eqref{eq:ageRelEnt} is jointly convex in $\vct{c}$ and the auxiliary variable $\vct{\nu}$, and moreover that no assumption is made on the sign of $c_k$.
{\color{revisionBlue}Using the representation \eqref{eq:ageRelEnt}, one may derive the following expression for} the dual of the {\color{revisionBlue}$k^{\text{th}}$} AGE cone
\begin{align}
\cage{\emat,k}^\dagger = \cl\{ \vct{v} :
&~{\color{revisionBlue}\vct{v} > 0}, \text{ and for some } \vct{\mu}_k \text{ in } \mathbb{R}^n \text{ we have} \nonumber \\
&v_k \ln(v_k/v_i) \leq (\evec_k- \evec_i)^\intercal \vct{\mu}_k \text{ for } i \text{ in } [m]  \}. \label{eq:cAGEstar}
\end{align}
\textcolor{revisionBlue}{The ``size'' of a primal or dual AGE cone refers to the number of variables plus the number of constraints in the above representations, which is $O(m)$ assuming $n \leq m$.}

\subsection{From ``AGE'' to ``SAGE''}\label{subsec:sum_of_age}

\textcolor{revisionBlue}{We call a signomial \textit{SAGE} if it can be written as a sum of AGE functions.
SAGE functions are globally nonnegative by construction;
cones of coefficients for SAGE signomials are denoted by
\begin{equation}
\csage{\emat} \doteq \textstyle\sum_{k=1}^m \cage{\emat,k}.\label{eq:CSAGE}
\end{equation}
Standard calculations in conic duality yield the following expression for a dual SAGE cone
\begin{equation}
\csage{\emat}^\dagger = \cap_{k=1}^m \cage{\emat,k}^\dagger. \label{eq:CSAGEstar}
\end{equation}}
\textcolor{revisionBlue}{Equations \ref{eq:CSAGE} and \ref{eq:CSAGEstar} provide natural definitions, but they also contain redundancies.
\begin{proposition}{\cite[Section 2.4]{CS16}}\label{prop:sage_def_wrt_exts}
    If $\evec_k \in \ext\newpoly{\emat}$ and $f = \Sig(\emat,\vct{c})$ is nonnegative, then $c_k \geq 0$.
    Consequently, if $\evec_k \in \ext\newpoly{\emat}$ then $\cage{\emat,k} = \R^m_+$.
\end{proposition}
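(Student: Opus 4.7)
The plan is to prove the first assertion by an asymptotic argument along a carefully chosen ray, then deduce the second assertion directly from the definition of $\cage{\emat,k}$.

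For the first assertion, the key observation is that extreme points of a polytope are exposed by linear functionals. Since the columns of $\emat$ are distinct and $\evec_k \in \ext \newpoly{\emat}$, standard convex geometry gives a vector $\vct{\mu} \in \R^n$ with $\vct{\mu}^\intercal \evec_k > \vct{\mu}^\intercal \evec_i$ for every $i \in [m] \setminus \{k\}$. I would then evaluate $f$ along the ray $\vct{x} = t\vct{\mu}$ and factor out the fastest-growing exponential:
\[
f(t\vct{\mu}) = \exp(t\,\vct{\mu}^\intercal \evec_k)\left[c_k + \sum_{i \neq k} c_i \exp\bigl(t(\evec_i - \evec_k)^\intercal \vct{\mu}\bigr)\right].
\]
Since $(\evec_i - \evec_k)^\intercal \vct{\mu} < 0$ for $i \neq k$, every term inside the bracket other than $c_k$ tends to $0$ as $t \to \infty$. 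Hence $\exp(-t\,\vct{\mu}^\intercal \evec_k)\, f(t\vct{\mu}) \to c_k$. Because $f \geq 0$ everywhere, the limit must satisfy $c_k \geq 0$.

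For the second assertion, I would unpack the definition in \eqref{eq:defAGE}: any $\vct{c} \in \cage{\emat,k}$ satisfies $\vct{c}_{\setminus k} \geq \vct{0}$ and $\Sig(\emat,\vct{c})$ is globally nonnegative. The first assertion, applied with the hypothesis $\evec_k \in \ext\newpoly{\emat}$, forces $c_k \geq 0$, so $\vct{c} \in \R^m_+$. The reverse inclusion $\R^m_+ \subseteq \cage{\emat,k}$ is immediate, since a signomial with nonnegative coefficients is nonnegative on all of $\R^n$. This yields $\cage{\emat,k} = \R^m_+$.

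The only non-routine step is the existence of the separating direction $\vct{\mu}$ that strictly separates $\evec_k$ from every other $\evec_i$; this is where distinctness of the columns of $\emat$ is used, ensuring that the supporting hyperplane exposing the extreme point $\evec_k$ contains no other $\evec_i$. Everything else reduces to an elementary limit computation and a one-line set-inclusion argument.
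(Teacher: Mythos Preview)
Your proof is correct. Note that the paper does not supply its own proof of this proposition; it is quoted as background from \cite[Section 2.4]{CS16}. Your argument---choosing a linear functional exposing the extreme point $\evec_k$ and taking a limit along the associated ray---is the standard one, and it is exactly in the spirit of the technique the paper later uses in the appendix proof of Lemma~\ref{lem:ifgisface} (constructing a direction from supporting hyperplanes of a face and analyzing asymptotics).
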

Proposition \ref{prop:sage_def_wrt_exts} is the most basic way Newton polytopes appear in the analysis of nonnegative signomials.
In our context it means that so long as $\ext \newpoly{\emat} \subsetneq \{\evec_i\}_{i=1}^m$, we can take $\csage{\emat}$ as the Minkowski sum of AGE cones $\cage{\emat,k}$ where $\evec_k$ are nonextremal in $\newpoly{\emat}$.}

\subsection{From nonnegativity to optimization}\label{subsec:nng_to_opt}

A means for certifying nonnegativity \textcolor{revisionBlue}{provides} a natural method for computing lower bounds for minimization problems: 
given a function $f$, find the largest constant $\gamma$ where $f - \gamma$ can be certified as globally nonnegative. 
We \textcolor{revisionBlue}{now} formalize this procedure for signomials and SAGE certificates. 

Given a signomial $f = \Sig(\emat,\vct{c})$ and a constant $\gamma$ in $\mathbb{R}$, we want to check if $ f - \gamma$ is SAGE. 
To do this, we need an unambiguous representation of $f - \gamma$ in terms of $\Sig(\cdot,\cdot)$ notation. 
Towards this end we \textcolor{revisionBlue}{suppose} $\emat$ has $\evec_1 = \vct{0}$, and we make a point of allowing any entry of $\vct{c}$ to be zero.
Under these  assumptions, the function $f - \gamma$ can be written as $\Sig(\emat, \vct{c} - \gamma \vct{e}_1)$. 
Thus the optimization problem
\begin{equation}
f_{\mathsf{SAGE}} \doteq \sup\{~ \gamma ~:~ \vct{c} - \gamma \vct{e}_1 \text{ in } \csage{\emat}  \} \label{eq:defSage0Primal}
\end{equation}
is well defined, and its optimal value satisfies $f_{\mathsf{SAGE}} \leq f^\star$. We also analyze the dual problem to \eqref{eq:defSage0Primal}, and for reference, we obtain via conic duality that
\begin{equation}
f_{\mathsf{SAGE}} = \inf\{~ \vct{c}^\intercal \vct{v} : ~\vct{e}_1^\intercal \vct{v} = 1,~ \vct{v} \text{ in } \csage{\emat}^\dagger   \}. \label{eq:defSage0Dual_conic}
\end{equation}

Although not done in \cite{CS16}, it can be shown that strong duality holds in the primal-dual pair \eqref{eq:defSage0Primal}-\eqref{eq:defSage0Dual_conic}.
This fact is important for our later theorems, and so we make a point to state it clearly in the following proposition:
\begin{proposition}
	Strong duality always holds in the computation of $f_{\mathsf{SAGE}}$.\label{prop:strong_duality}
\end{proposition}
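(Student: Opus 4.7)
My approach is to invoke the standard strong duality theorem for conic linear programs, which requires verifying Slater's condition on one side of the primal-dual pair \eqref{eq:defSage0Primal}--\eqref{eq:defSage0Dual_conic}. Weak duality is automatic from the conic setup, so the task reduces to exhibiting a strictly feasible point of the dual \eqref{eq:defSage0Dual_conic}: a vector $\vct{v}$ with $\vct{e}_1^\intercal \vct{v} = 1$ lying in the interior $\sint \csage{\emat}^\dagger$.

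The first ingredient is that $\sint \csage{\emat}^\dagger$ is nonempty, which is equivalent to $\csage{\emat}$ being a pointed cone. Pointedness follows from the containment $\csage{\emat} \subseteq \cnns{\emat}$ together with pointedness of $\cnns{\emat}$ itself: if both $\Sig(\emat,\vct{c})$ and $\Sig(\emat,-\vct{c})$ are globally nonnegative, then $\Sig(\emat,\vct{c}) \equiv 0$, and the assumed distinctness of the columns of $\emat$ forces $\vct{c} = \vct{0}$. The second ingredient is the opposite containment $\R^m_+ \subseteq \csage{\emat}$: each standard basis vector $\vct{e}_i$ lies in any AGE cone $\cage{\emat,k}$, because its nonnegativity as a coefficient vector is obvious and the resulting signomial $\exp(\evec_i^\intercal \vct{x})$ is globally positive. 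Dualizing this containment gives $\csage{\emat}^\dagger \subseteq \R^m_+$, and hence $\sint \csage{\emat}^\dagger \subseteq \R^m_{++}$. Picking any $\vct{w} \in \sint \csage{\emat}^\dagger$, the rescaling $\vct{v} \doteq \vct{w}/w_1$ still lies in $\sint \csage{\emat}^\dagger$ and satisfies $\vct{e}_1^\intercal \vct{v} = 1$, yielding the Slater point.

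With Slater's condition in hand on the dual side, the standard conic strong duality theorem delivers zero duality gap and attainment of the primal optimum whenever the primal value is finite. Finiteness is automatic since $f_{\mathsf{SAGE}} \leq f^\star \leq \Sig(\emat,\vct{c})(\vct{0}) < \infty$. The main obstacle is not any deep technical step but rather the careful assembly of the two containments $\R^m_+ \subseteq \csage{\emat} \subseteq \cnns{\emat}$ and tracking their consequences under duality. A secondary concern is the degenerate case of an infeasible primal, where $f_{\mathsf{SAGE}} = -\infty$ by convention; here strict dual feasibility combined with a separating-hyperplane argument against $\csage{\emat}$ at each excluded $\vct{c} - \gamma \vct{e}_1$ produces a sequence of feasible dual points driving the dual objective to $-\infty$, preserving the no-gap conclusion.
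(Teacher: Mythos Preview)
Your proof is correct. The approach, however, differs from the paper's. You establish a Slater point for the dual \eqref{eq:defSage0Dual_conic} by combining pointedness of $\csage{\emat}$ (inherited from $\cnns{\emat}$) with the containment $\csage{\emat}^\dagger \subseteq \R^m_+$, then invoke the textbook conic strong-duality theorem. The paper instead proves a self-contained lemma: for \emph{any} closed convex cone $C$ and any nonzero $\vct{a} \in C^\dagger$, the pair $\sup\{\gamma : \vct{c} - \gamma\vct{a} \in C\}$ and $\inf\{\vct{c}^\intercal\vct{v} : \vct{a}^\intercal\vct{v} = 1,\ \vct{v} \in C^\dagger\}$ has no gap. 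The argument is direct---one shows that the closed conic hull of the dual feasible set equals all of $C^\dagger$, whence $\vct{c} - f_{\mathrm{d}}\vct{a}$ lies in $C^{\dagger\dagger} = C$---and never appeals to Slater or to pointedness. What the paper's route buys is generality: it applies verbatim with $\vct{a}$ replaced by any element of $C^\dagger$ and with $C$ replaced by the restricted cone $\sum_{i \in N\cup\{1\}}\cage{\emat,i,N}$ from Corollary~\ref{cor:sagerelax_restrictss_primal}, and (as the paper notes) extends to every level $\sagerelax{f}{p}$ of the hierarchy. Your Slater argument is cleaner for the specific case at hand but leans on the particular structure $\vct{a} = \vct{e}_1$ and $\csage{\emat}^\dagger \subseteq \R^m_+$ to guarantee $w_1 > 0$; adapting it to other choices of $\vct{a}$ or $C$ would require re-verifying strict feasibility each time. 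One small note: your separating-hyperplane sketch for the infeasible-primal case would need care to ensure the separating functional has $(\vct{v}_\gamma)_1 > 0$, but this is moot since the standard Slater theorem already covers that case.
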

The proposition is proven in Appendix 7.2 using convex analysis.

\section{Structural Results for SAGE Certificates}\label{sec:SAGEcontributions}

This section presents {\color{revisionBlue}two} new geometric results and analytical characterizations on the SAGE cone.
These results have applications to polynomial nonnegativity, as discussed later in Section \ref{sec:PolyNNG}.
Statements of the theorems are provided below along with {\color{revisionBlue}remarks on} the theorems' significance. Proofs are deferred to later subsections.

\subsection{Summary of structural results}

Our first theorem shows that when checking if $\vct{c}$ belongs to $\csage{\emat}$ we can restrict the search space of SAGE decompositions to those exhibiting a very particular structure. It highlights the sparsity-preserving property of SAGE, and in so doing has significant implications for both the practicality of solving SAGE relaxations, and Section \ref{sec:PolyNNG}'s development of SAGE polynomials.
\begin{theorem}\label{thm:restrictSS}
    If $\vct{c}$ is a vector in $\csage{\emat}$ with nonempty \textcolor{revisionBlue}{index} set $N \doteq \{ i : c_i < 0\}$, then {\color{revisionBlue}there} exist vectors {\color{revisionBlue}$\{ \vct{c}^{(i)} \in \cage{\emat,i}\}_{i \in N}$} satisfying $\vct{c} = \sum_{i \in N} \vct{c}^{(i)}$ and $c^{(i)}_j = 0$ for all distinct $i,j \in N$.
\end{theorem}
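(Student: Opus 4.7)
The plan is to start with any SAGE decomposition $\vct{c} = \sum_{k=1}^m \vct{c}^{(k)}$ with $\vct{c}^{(k)} \in \cage{\emat,k}$ and transform it into the desired form through two rounds of mass-transfer operations that preserve AGE-membership of each summand. The key ingredient throughout will be a careful adjustment of the relative entropy witnesses underlying each AGE piece.

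In the first round I eliminate summands indexed by $k \notin N$. If $c^{(k)}_k \geq 0$, then $\vct{c}^{(k)} \in \R^m_+ \subseteq \cage{\emat,i_0}$ for any fixed $i_0 \in N$, so convexity of $\cage{\emat,i_0}$ permits the absorption $\vct{c}^{(i_0)} \leftarrow \vct{c}^{(i_0)} + \vct{c}^{(k)}$, $\vct{c}^{(k)} \leftarrow \vct{0}$. If instead $k \notin N$ but $c^{(k)}_k < 0$, the original nonnegativity $c_k \geq 0$ combined with $c^{(j)}_k \geq 0$ for every $j \neq k$ forces $\sum_{j \neq k} c^{(j)}_k \geq |c^{(k)}_k|$, so I shift nonnegative mass at coordinate $k$ from the donors $\{\vct{c}^{(j)}\}_{j \neq k}$ into $\vct{c}^{(k)}$ until $c^{(k)}_k$ reaches zero, reducing to the previous sub-case.

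In the second round, given a decomposition $\vct{c} = \sum_{i \in N} \vct{c}^{(i)}$ with $\vct{c}^{(i)} \in \cage{\emat,i}$, I zero out the off-diagonal entries $c^{(i)}_j$ for $i,j \in N$ distinct by transferring $c^{(i)}_j \vct{e}_j$ from $\vct{c}^{(i)}$ into $\vct{c}^{(j)}$. Receiving this nonnegative vector preserves $\cage{\emat,j}$-membership, because AGE cones contain the nonnegative orthant; on the donor side I must check that decreasing $c^{(i)}_j$ to zero still leaves $\vct{c}^{(i)}$ in $\cage{\emat,i}$.

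The main obstacle, shared by both rounds, is that decreasing a coordinate $c^{(j)}_k$ -- the ``$\lambda_k$'' slot inside the relative entropy in \eqref{eq:ageRelEnt} -- can inflate $D(\vct{\nu}^{(j)}, e\vct{c}^{(j)}_{\setminus j})$ above the bound $c^{(j)}_j$. The remedy is to exhibit an adjusted witness $\tilde{\vct{\nu}}^{(j)}$ with $\tilde{\nu}^{(j)}_k = 0$, so that the offending term $\nu_k\ln(\nu_k/(e\lambda_k))$ drops out of the relative entropy sum entirely, while $\tilde{\vct{\nu}}^{(j)}$ still satisfies the affine constraint $[\emat_{\setminus j} - \evec_j \vct{1}^\intercal]\tilde{\vct{\nu}}^{(j)} = \vct{0}$ and the tightened relative entropy bound. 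I expect this to follow from a linear-algebraic rebalancing in the null space of $[\emat_{\setminus j} - \evec_j \vct{1}^\intercal]$ combined with joint convexity of $D$ in its two arguments, and to lean on Proposition \ref{prop:sage_def_wrt_exts} to discharge the trivial sub-case in which some involved exponent is extremal in $\newpoly{\emat}$. Making this witness-rebalancing precise -- and verifying that iterating it across all the pairs handled in the two rounds terminates without violating any AGE condition -- constitutes the core technical content of the theorem.
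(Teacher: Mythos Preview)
Your proposal contains a genuine gap. The coordinate-wise ``mass transfer'' operations you describe do \emph{not} preserve AGE-membership of the donor, and the proposed remedy of rebalancing the witness to force $\tilde{\nu}^{(j)}_k = 0$ cannot be carried out in general.

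Here is a concrete obstruction in the second round. Take $\emat = [0,1,2,3,4]$, and suppose after the first round you hold
\[
\vct{c}^{(3)} = (1,0,-3,2,0) \in \cage{\emat,3}, \qquad
\vct{c}^{(4)} = (0,0,2,-4,2) \in \cage{\emat,4},
\]
so that $\vct{c} = (1,0,-1,-2,2)$ and $N = \{3,4\}$. Your second round transfers $c^{(3)}_4\,\vct{e}_4$ and $c^{(4)}_3\,\vct{e}_3$, producing on the donor side the vector $(1,0,-1,0,0)$ as the new $\vct{c}^{(3)}$. But $\Sig(\emat,(1,0,-1,0,0))(x)=1-e^{2x}$, which is negative for $x>0$; this vector is not in $\cage{\emat,3}$. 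Your witness-rebalancing cannot save this: any $\tilde{\vct{\nu}}\in\R^{[5]\setminus 3}_+$ with $\tilde\nu_4=0$ and $[\emat_{\setminus 3}-\evec_3\vct{1}^\intercal]\tilde{\vct{\nu}}=\vct{0}$ must also have $\tilde\nu_2=\tilde\nu_5=0$ (since $c^{(3)}_2=c^{(3)}_5=0$) and hence $\tilde\nu_1=0$, giving $D=0>-1$. Both $\evec_3$ and $\evec_4$ are nonextremal, so Proposition~\ref{prop:sage_def_wrt_exts} offers no escape. The same phenomenon afflicts your first round whenever the donor's optimal witness genuinely needs $\nu_k>0$: joint convexity of $D$ gives no mechanism for producing a feasible witness with a prescribed zero pattern.

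The paper's proof avoids this obstacle by never subtracting a single-coordinate vector from a summand. Instead it works entirely with \emph{conic combinations} of the full AGE vectors. Lemma~\ref{lem:k_neg_coeff_means_k_age_functions} handles your first round by replacing $\vct{c}^{(i)} \leftarrow \vct{c}^{(i)} + \lambda_i \vct{c}^{(k)}$ with carefully chosen $\lambda_i\geq 0$; Lemma~\ref{lem:nng_linear_span_transform} handles your second round by replacing a pair $(\vct{c}^{(i)},\vct{c}^{(j)})$ with two vectors in their conic hull having the desired zero pattern. Because $\cnns{\emat}$ is a convex cone, any conic combination of AGE vectors is automatically nonnegative, and the only thing left to check is that the resulting sign pattern has at most one negative entry in the right slot---a purely arithmetic verification. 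No manipulation of relative entropy witnesses is required at all.
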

We can use Theorem \ref{thm:restrictSS} to define some parameterized AGE cones that will be of use to us in Section \ref{sec:structureSAGEandNNG}. 
Specifically,  for an index set \textcolor{revisionBlue}{$J$} contained within $[m]$, and an index $i$ in $[m]$, define
\begin{equation}
    \cage{\emat,i,\textcolor{revisionBlue}{J}} = \{  \vct{c} : \vct{c} \text{ in } \cage{\emat,i}, ~ c_j = 0 \text{ for all } j \text{ in } \textcolor{revisionBlue}{J} \setminus \{i\} \}. \nonumber
\end{equation}
In terms of such sets we have the following corollary of Theorem \ref{thm:restrictSS}.
\begin{corollary}
	A signomial $f = \Sig(\emat,\vct{c})$ with $\evec_1 = \vct{0}$  has 
	\[f_{\mathsf{SAGE}} = \sup\{ \gamma : \vct{c} - \gamma \vct{e}_1 \mathrm{~in~} \textstyle\sum_{i \in N\cup \{1\} }\cage{\emat,i,N} \}\]
	for both $N = \{ i: c_i < 0 \}$ and $N = \{ i : c_i \leq 0 \}$.
	\label{cor:sagerelax_restrictss_primal}
\end{corollary}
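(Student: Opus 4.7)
The plan is to read the corollary as a direct application of Theorem \ref{thm:restrictSS} to the shifted coefficient vector $\vct{c} - \gamma \vct{e}_1$ for each candidate $\gamma$. The key interpretational point I would fix at the outset is that the set $N$ in the sparsity condition $c_j = 0$ refers to the negative-index set of the vector actually being tested for membership, namely $\tilde{\vct{c}} := \vct{c} - \gamma \vct{e}_1$, rather than of $\vct{c}$ itself.

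The direction $\sup\{\ldots\} \leq f_{\mathsf{SAGE}}$ is immediate: the chain $\cage{\emat,i,N} \subseteq \cage{\emat,i} \subseteq \csage{\emat}$ together with the closure of $\csage{\emat}$ under addition shows that the restricted Minkowski sum is contained in $\csage{\emat}$, so every $\gamma$ feasible on the right-hand side is feasible in the definition of $f_{\mathsf{SAGE}}$.

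For the reverse direction I fix $\gamma$ with $\tilde{\vct{c}} \in \csage{\emat}$ and let $N$ be the strict negative-index set of $\tilde{\vct{c}}$. When $N$ is nonempty, Theorem \ref{thm:restrictSS} supplies $\{\vct{c}^{(i)} \in \cage{\emat,i}\}_{i \in N}$ with $\sum_{i \in N} \vct{c}^{(i)} = \tilde{\vct{c}}$ and $c^{(i)}_j = 0$ for all distinct $i,j \in N$; this latter condition is precisely the defining sparsity constraint of $\cage{\emat,i,N}$, so each summand lies in $\cage{\emat,i,N}$. A zero summand adjoined at index $1$ when $1 \notin N$ accommodates the ``$\cup\{1\}$'' in the indexing, since $\vct{0}$ trivially belongs to $\cage{\emat,1,N}$. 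The degenerate possibility that $N$ is empty is handled by observing that $\tilde{\vct{c}}$ is then coordinatewise nonnegative and lands in $\cage{\emat,1} = \cage{\emat,1,\emptyset}$, which is the sole summand of the restricted sum in that case.

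The parallel statement with $N = \{i : c_i \leq 0\}$ requires only an observation about zero coordinates: for any $j \in N_{\text{weak}} \setminus N_{\text{strict}}$ we have $\tilde{c}_j = 0$, and combining the identity $\sum_i c^{(i)}_j = 0$ with the nonnegativity $c^{(i)}_j \geq 0$ for $i \neq j$ (which follows from $\vct{c}^{(i)} \in \cage{\emat,i}$) forces every $c^{(i)}_j = 0$. Thus the strict decomposition already respects the weaker sparsity condition, and appending zero summands at the new indices produces a decomposition indexed by $N_{\text{weak}}$. The main point of care I expect is the notational reading of $N$ described in the first paragraph; once that is fixed, both halves of the corollary follow essentially by inspection from Theorem \ref{thm:restrictSS}, the only variation between strict and weak being the free zero summands added at vanishing coordinates.
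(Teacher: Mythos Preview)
Your reinterpretation of $N$ as the negative-index set of $\tilde{\vct{c}} = \vct{c} - \gamma \vct{e}_1$ is a misreading of the statement. The corollary explicitly defines $N = \{i : c_i < 0\}$ (resp.\ $\leq 0$) in terms of the original coefficient vector $\vct{c}$, so $N$ is a \emph{fixed} index set independent of $\gamma$; this is also how the result is used in the proof of Theorem~\ref{thm:linear_indep_and_all_nonext_neg_is_exact}, where $N$ is set from $\vct{c}$ before the corollary is invoked and the resulting dual problem has a fixed constraint structure. Under your reading, the cone $\sum_{i \in N \cup \{1\}} \cage{\emat,i,N}$ would itself vary with $\gamma$, and the right-hand side would not describe a standard conic feasibility condition at all.

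The correct bridge between the fixed $N$ and the $\gamma$-dependent negative set $\tilde{N}$ of $\tilde{\vct{c}}$ is the elementary observation that $\tilde{c}_j = c_j$ for every $j \neq 1$. Hence $\tilde{N}$ and $N$ can differ only at index $1$, so $\tilde{N} \subseteq N \cup \{1\}$ and the decomposition from Theorem~\ref{thm:restrictSS} already uses only indices in the stated range. For the sparsity requirement $c^{(i)}_j = 0$ at $j \in N \setminus \{i\}$: when $j \neq 1$ and $c_j < 0$ one has $j \in \tilde{N}$, so Theorem~\ref{thm:restrictSS} gives the vanishing directly; when $c_j = 0$ (relevant only for the weak $N$) your zero-coordinate trick applies verbatim since then $\tilde{c}_j = 0$ as well. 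The explicit ``$\cup\{1\}$'' in the index set is there precisely to accommodate whatever sign appears at coordinate $1$, the sole place where $\vct{c}$ and $\tilde{\vct{c}}$ disagree. Once you organize the argument around $\tilde{c}_j = c_j$ for $j \neq 1$ rather than around redefining $N$, your proof goes through essentially as written.
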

This corollary has two implications {\color{revisionBlue}concerning} practical algorithms for signomial optimization.
First, it shows that for $k = | \{ i : c_i < 0 \}|$, computing $f_{\mathsf{SAGE}}$ can easily be accomplished with a relative entropy program of size $O(km)$; this is a dramatic improvement over the na\"ive implementation for computing $f_{\mathsf{SAGE}}$, which involves a relative entropy program of size $O(m^2)$.
Second -- the improved conditioning resulting from restricting the search space in this way often makes the difference in whether existing solvers can handle SAGE relaxations of moderate size.
This point is {\color{revisionBlue}highlighted} in recent experimental demonstrations of relative entropy relaxations; the authors of \cite{REPOP} discuss various preprocessing strategies to {\color{revisionBlue}more quickly solve} such optimization problems.

Our next theorem characterizes the extreme rays of the SAGE cone. To describe these extreme rays, we use a notion from matroid theory \cite{matroids}:
a set of points $X = \{ \vct{x}_i \}_{i=1}^\ell$ is called a \textit{circuit} if it is affinely dependent, but any proper subset $\{ \vct{x}_i \}_{i \neq k}$ is affinely independent. If {\color{revisionBlue}the convex hull of} a circuit with $\ell$ elements contains $\ell-1$ extreme points, then we say the circuit is \textit{simplicial}.
\begin{theorem}\label{thm:extreme_rays}
	{\color{revisionBlue}If $\vct{c} \in \R^m$ generates an extreme ray of $\csage{\emat}$, then $\{ \evec_i \,:\, i \in [m], c_i \neq 0 \}$ is either a singleton or a simplicial circuit.}
\end{theorem}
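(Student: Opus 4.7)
The plan is to reduce to the case $\vct{c}\in\cage{\emat,k}$ with a single negative coordinate $c_k<0$, and then show that the support $S=\supp\vct{c}$ must be a simplicial circuit by contradiction, exploiting the KKT conditions of the entropy-minimization problem that defines the AGE witness. For the reduction: if $\vct{c}\geq\vct{0}$ then $\vct{c}\in\R^m_+\subseteq\csage{\emat}$, so extremality in $\csage{\emat}$ forces extremality in $\R^m_+$ and hence a singleton support; if instead $|\{i:c_i<0\}|\geq 2$, Theorem \ref{thm:restrictSS} decomposes $\vct{c}=\sum_{i\in N}\vct{c}^{(i)}$ with $\vct{c}^{(i)}\in\cage{\emat,i}$ and disjoint negative-coordinate patterns, so no $\vct{c}^{(i)}$ is proportional to $\vct{c}$. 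This leaves the case $\vct{c}\in\cage{\emat,k}$ with $c_k<0$ the only negative coordinate; since $\cage{\emat,k}\subseteq\csage{\emat}$, the vector $\vct{c}$ is then also extreme in $\cage{\emat,k}$.

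Let $\vct{\nu}^*\geq\vct{0}$ be the (unique, by strict convexity) minimizer of $\relent{\vct{\nu}}{e\vct{c}_{\setminus k}}$ subject to $[\emat_{\setminus k}-\evec_k\vct{1}^\intercal]\vct{\nu}=\vct{0}$. Two perturbation arguments establish $\supp\vct{\nu}^*=S\setminus\{k\}$ and $\relent{\vct{\nu}^*}{e\vct{c}_{\setminus k}}=c_k$: if some $j\in S\setminus\{k\}$ had $\nu^*_j=0$ (so $c_j>0$), then $\vct{c}\pm t\vct{e}_j$ would remain in $\cage{\emat,k}$ with witness $\vct{\nu}^*$ for small $t>0$ (the $j$-th contribution to the entropy is zero and thus unaffected by perturbations of $c_j$), giving a nontrivial decomposition of $\vct{c}$; tightness is argued analogously by perturbing $c_k$ itself. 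Since $\vct{\nu}^*\neq\vct{0}$ (else the entropy equals $0>c_k$), the linear constraint reads as $\evec_k=\sum_{i\in S\setminus\{k\}}\lambda_i\evec_i$ with $\lambda_i=\nu^*_i/\vct{1}^\intercal\vct{\nu}^*>0$, placing $\evec_k$ in the relative interior of $\conv\{\evec_i:i\in S\setminus\{k\}\}$. Hence $\{\evec_i:i\in S\}$ is a simplicial circuit exactly when $\{\evec_i:i\in S\setminus\{k\}\}$ is affinely independent.

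The main challenge is the last step: assuming $\{\evec_i:i\in S\setminus\{k\}\}$ is affinely dependent, produce $\vct{c}^{(1)},\vct{c}^{(2)}\in\cage{\emat,k}$ with $\vct{c}^{(1)}+\vct{c}^{(2)}=\vct{c}$ and $\supp\vct{c}^{(j)}\subsetneq S$ for at least one $j$, contradicting extremality. Fix a nonzero $\vct{\tau}$ supported on $S\setminus\{k\}$ with $\vct{1}^\intercal\vct{\tau}=0$ and $\sum_i\tau_i\evec_i=\vct{0}$, and set $\vct{\nu}^{(1)}=\tfrac{1}{2}\vct{\nu}^*+s^*\vct{\tau}$ and $\vct{\nu}^{(2)}=\tfrac{1}{2}\vct{\nu}^*-s^*\vct{\tau}$, where $s^*>0$ is the largest scalar keeping both vectors nonnegative; at $s^*$ some coordinate of one $\vct{\nu}^{(j)}$ vanishes, and both $\vct{\nu}^{(j)}$ still satisfy the linear constraint because $\vct{\tau}$ does. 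The KKT conditions for $\vct{\nu}^*$ supply a dual multiplier $\vct{\mu}_k\in\R^n$ with $\ln(\nu^*_i/c_i)=\vct{\mu}_k^\intercal(\evec_i-\evec_k)$ on $S\setminus\{k\}$, which suggests defining $c^{(j)}_i=(c_i/\nu^*_i)\nu^{(j)}_i$ on $S\setminus\{k\}$, $c^{(j)}_k=-\vct{1}^\intercal\vct{\nu}^{(j)}$, and $c^{(j)}_i=0$ elsewhere. A short calculation using $\sum_i\nu^{(j)}_i(\evec_i-\evec_k)=\vct{0}$ cancels the $\vct{\mu}_k$ contribution and yields $\relent{\vct{\nu}^{(j)}}{e\vct{c}^{(j)}_{\setminus k}}=-\vct{1}^\intercal\vct{\nu}^{(j)}=c^{(j)}_k$, certifying $\vct{c}^{(j)}\in\cage{\emat,k}$; the same identity applied to $\vct{\nu}^*$ combined with the tightness gives $c_k=-\vct{1}^\intercal\vct{\nu}^*$, from which $\vct{c}^{(1)}+\vct{c}^{(2)}=\vct{c}$ follows coordinate-wise. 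This completes the contradiction, so $\{\evec_i:i\in S\setminus\{k\}\}$ is affinely independent and $\{\evec_i:i\in S\}$ is a simplicial circuit.
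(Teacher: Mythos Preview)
Your argument is correct and complete. It differs from the paper's proof in a meaningful way, though the two share a common core.

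Both proofs reduce to the AGE cone $\cage{\emat,k}$ and both exploit the ratio-preserving construction $c^{(j)}_i=(c_i/\nu^*_i)\,\nu^{(j)}_i$, which makes $\ln(\nu^{(j)}_i/c^{(j)}_i)=\ln(\nu^*_i/c_i)$ and thereby turns the relative-entropy computation for each piece into a linear functional of $\vct{\nu}^{(j)}$. The difference is in how the witness $\vct{\nu}$ is decomposed. The paper does not assume extremality of $\vct{c}$ at all: it applies a Carath\'eodory-type lemma (Lemma~\ref{lem:decompose_mixture}) to $\vct{\lambda}=\vct{\nu}/\vct{1}^\intercal\vct{\nu}$ to write it as a convex combination of vectors with affinely independent supports, yielding a full decomposition of \emph{any} AGE vector into simplicial AGE vectors plus a nonnegative remainder. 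Your route instead establishes support and tightness of the optimal witness via two perturbation arguments, then splits $\tfrac12\vct{\nu}^*$ along a single affine-dependence direction $\vct{\tau}$ to produce a nontrivial two-term decomposition. Your approach is more elementary in that it avoids the polyhedral lemma, while the paper's approach is more constructive and proves slightly more (a decomposition for every AGE vector, not just extreme ones).

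One remark: your appeal to the KKT multiplier $\vct{\mu}_k$ is not actually needed. Since the ratios are preserved, one has directly
\[
\relent{\vct{\nu}^{(1)}}{e\vct{c}^{(1)}_{\setminus k}}+\relent{\vct{\nu}^{(2)}}{e\vct{c}^{(2)}_{\setminus k}}
=\sum_{i}(\nu^{(1)}_i+\nu^{(2)}_i)\ln\!\bigl(\nu^*_i/(ec_i)\bigr)
=\relent{\vct{\nu}^*}{e\vct{c}_{\setminus k}}=c_k,
\]
so defining $c^{(j)}_k=\relent{\vct{\nu}^{(j)}}{e\vct{c}^{(j)}_{\setminus k}}$ (as the paper does) already gives $c^{(1)}_k+c^{(2)}_k=c_k$ from tightness alone. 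The KKT identity $\ln(\nu^*_i/c_i)=\vct{\mu}_k^\intercal(\evec_i-\evec_k)$ only serves to give the explicit formula $c^{(j)}_k=-\vct{1}^\intercal\vct{\nu}^{(j)}$, which is pleasant but not essential to the argument.
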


{\color{revisionBlue}Theorem \ref{thm:extreme_rays} can be viewed as a signomial generalization of a result by Reznick concerning \textit{agiforms} \cite[Theorem 7.1]{Reznick1989}. The theorem} admits a partial converse:
{\color{revisionBlue}if $X = \{\evec_j\}_{j \in J\cup\{i\}}$ is a simplicial circuit with nonextremal term $\evec_i$, then there is an extreme ray of $\cage{\emat,i}$ supported on $J \cup \{i\}$.}
When specialized to the context of polynomials, this result gives us an equivalence between SAGE polynomials (suitably defined in Section \ref{sec:PolyNNG}) and the previously defined SONC polynomials \cite{SONC1}, thus providing an efficient description of the latter set which was not known to be tractable.

\subsection{Proof of the restriction theorem for SAGE decompositions (Theorem \ref{thm:restrictSS})}

\textcolor{revisionBlue}{Our proof requires two lemmas. The first such lemma indicates the claim of the theorem applies far more broadly than for SAGE functions alone.}

\begin{lemma}\label{lem:k_neg_coeff_means_k_age_functions}
	{\color{revisionBlue}Let $K \subset \R^m$ be a convex cone containing the nonnegative orthant. For an index $i \in [m]$, define $C_i = \{ \vct{c} \in K\,:\, \vct{c}_{\setminus i} \geq \vct{0}\}$, and sum these to $C = \sum_{i=1}^m C_i$. We claim that a vector $\vct{c}$ with at least one negative entry belongs to $C$ if and only if}
	\[
	{\color{revisionBlue}\vct{c} \in \textstyle\sum_{i: c_i < 0} C_i.}
	\]
\end{lemma}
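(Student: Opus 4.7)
The plan is to observe that the reverse inclusion is immediate ($\sum_{i:c_i<0} C_i \subseteq \sum_{i=1}^m C_i = C$), so all the work lies in the forward direction. For that, the strategy is to take any decomposition $\vct{c} = \sum_{j \in T} \vct{c}^{(j)}$ with $\vct{c}^{(j)} \in C_j$ starting from $T = [m]$, and iteratively shrink $T$ by removing indices $i \in T$ with $c_i \geq 0$ until $T = N$. The core technical step is a one-index elimination lemma: if $\vct{c} = \sum_{j \in T} \vct{c}^{(j)}$ with $\vct{c}^{(j)} \in C_j$, and some $i \in T$ satisfies $c_i \geq 0$ while $T \setminus \{i\}$ is nonempty, then there is a decomposition $\vct{c} = \sum_{j \in T \setminus \{i\}} \tilde{\vct{c}}^{(j)}$ with $\tilde{\vct{c}}^{(j)} \in C_j$.

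To construct the new decomposition, I would redistribute $\vct{c}^{(i)}$ across the remaining summands using nonnegative weights $\{\alpha_j\}_{j \in T \setminus \{i\}}$ with $\sum_j \alpha_j = 1$, setting $\tilde{\vct{c}}^{(j)} = \vct{c}^{(j)} + \alpha_j \vct{c}^{(i)}$. Then $\sum_{j \neq i} \tilde{\vct{c}}^{(j)} = \vct{c}$ automatically, and $\tilde{\vct{c}}^{(j)} \in K$ holds because $K$ is a convex cone containing both summands. The substantive requirement is $\tilde{\vct{c}}^{(j)}_k \geq 0$ for every $k \neq j$. For $k \notin \{i,j\}$ this is automatic: $\vct{c}^{(j)}_k \geq 0$ since $k \neq j$, and $\vct{c}^{(i)}_k \geq 0$ since $k \neq i$. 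The one binding constraint is at $k = i$, where setting $r = \max(0,-c^{(i)}_i)$ reduces the requirement to $\alpha_j r \leq c^{(j)}_i$.

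Feasibility of the $\alpha_j$ is the heart of the argument. Summing coordinate $i$ over all original summands gives $\sum_{j \in T} c^{(j)}_i = c_i \geq 0$, and since $c^{(j)}_i \geq 0$ whenever $j \neq i$ (as $i$ is not the exceptional coordinate for $C_j$), this yields the budget inequality $\sum_{j \in T \setminus \{i\}} c^{(j)}_i \geq -c^{(i)}_i = r$. When $r > 0$, one can choose $\alpha_j \in [0, c^{(j)}_i / r]$ summing to $1$ (e.g., $\alpha_j = c^{(j)}_i / \sum_{k \neq i} c^{(k)}_i$); when $r = 0$ we have $\vct{c}^{(i)} \geq \vct{0}$ and any probability distribution over $T \setminus \{i\}$ works. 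This completes the elimination step.

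Finally, iterating the elimination step removes the indices of $P \doteq \{i : c_i \geq 0\}$ from $T$ one at a time. The nonemptiness of $T \setminus \{i\}$ is preserved throughout because we stop at $T = N$, which is nonempty by the lemma's hypothesis that $\vct{c}$ has a negative entry; indeed, if we were about to eliminate $i \in P$ from some $T$ and $T \setminus \{i\}$ were empty, then $\vct{c} = \vct{c}^{(i)} \in C_i$ would have its only possible negative coordinate at $i$, yet $c_i \geq 0$ forces $\vct{c} \geq \vct{0}$, contradicting $N \neq \emptyset$. The mildest obstacle is just the bookkeeping of the redistribution weights and verifying the sign conditions coordinate by coordinate; conceptually, everything hinges on recognizing that $c_i \geq 0$ provides exactly enough slack in the $i$-th coordinates of the other summands to absorb the negative $i$-th coordinate of $\vct{c}^{(i)}$.
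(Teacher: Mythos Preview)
Your proposal is correct and follows essentially the same approach as the paper's proof: both argue by iteratively eliminating an index $i$ with $c_i \geq 0$ from the decomposition, redistributing $\vct{c}^{(i)}$ among the remaining summands via nonnegative weights (uniform when $c^{(i)}_i \geq 0$, proportional to $c^{(j)}_i$ when $c^{(i)}_i < 0$), and verifying that the new summands stay in $C_j$ using the budget inequality $\sum_{j \neq i} c^{(j)}_i \geq -c^{(i)}_i$. Your treatment is slightly more unified (via $r = \max(0,-c^{(i)}_i)$) and you make the nonemptiness check at each step explicit, but the argument is the same.
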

\begin{proof}
	{\color{revisionBlue}Suppose $\vct{c} \in C$ has a decomposition $\vct{c} = \sum_{i \in N} \vct{c}^{(i)}$ where each $\vct{c}^{(i)}$ belongs to $C_i$.}
	If $N = \{ i : c_i < 0 \}$, then there is nothing to prove, so suppose there is some $k$ in $N$ with $c_k \geq 0$.
	We construct an alternative decomposition of $\vct{c}$ using only {\color{revisionBlue} cones $C_i$} with $i$ in $N \setminus \{k\}$.
	
	The construction depends on the sign of $c^{(k)}_k$. 
	If $c^{(k)}_k$ is nonnegative then the problem of removing dependence on $C_k$ {\color{revisionBlue}simple}: for $i$ in $N \setminus \{k\} $, the vectors
	\[
	\tilde{\vct{c}}^{(i)} = \vct{c}^{(i)} + \vct{c}^{(k)} / (|N| - 1)
	\]
	belong to $C_i$ {\color{revisionBlue}(since $C_i \supset \R^m_+$)}, and sum to $\vct{c}$.
	If instead  $c^{(k)}_k < 0$, then there exists some index $i \neq k$ in $N$ with $c^{(i)}_k$ positive.  
	This allows us to define the distribution $\vct{\lambda}$ with $\lambda_i = c^{(i)}_k / \sum_{j \in N \setminus \{k\} } c^{(j)}_k$ for $i \neq k$ in $N$. 
	With $\vct{\lambda}$ we construct the $|N| - 1$ vectors
	\[
	\tilde{\vct{c}}^{(i)} = \vct{c}^{(i)} + \lambda_i \vct{c}^{(k)}.
	\]
	{\color{revisionBlue}The vectors $\tilde{\vct{c}}^{(i)}$ belong to $K$ because they are a conic combiation of vectors in $K$ ($\vct{c}^{(i)}$ and $\vct{c}^{(k)}$).}
	We claim that for every $i \neq k$ in $N$, the coordinate $\tilde{c}^{(i)}_k$ is nonnegative. 
	This is certainly true when $\lambda_i = 0$, but more importantly, $\lambda_i > 0$ implies
	\begin{equation*}
	\frac{1}{\lambda_i} \tilde{c}^{(i)}_k = \frac{1}{\lambda_i}  \left( c^{(i)}_k + \lambda_i c^{(k)}_k  \right) = [\textstyle\sum_{j \in N \setminus \{k\}} c^{(j)}_k] + c^{(k)}_k = c_k \geq 0.
	\end{equation*} 
	Hence $\vct{c}$ can be expressed as the sum of vectors $\{ \tilde{\vct{c}}^{(i)}\}_{i \in N \setminus \{ k \}}$ where each vector $\tilde{\vct{c}}^{(i)}$ belongs to {\color{revisionBlue}$C_i$}.
	
	From here, update $N \leftarrow N \setminus \{k\}$.
	If $N$ contains another index $k'$ with $c_{k'} \geq 0$, 
	then repeat the above procedure to remove the unnecessary {\color{revisionBlue}cone $C_{k'}$}. 
	Naturally, this process continues until $N = \{ i : c_i < 0 \}$.
\end{proof}

\begin{lemma}\label{lem:nng_linear_span_transform}  
	Let $\vct{w}$, $\vct{v}$ be vectors in $\mathbb{R}^m$ with distinguished indices $i \neq j$ so that 
	\[
	\vct{w}_{\setminus i},~ \vct{v}_{\setminus j} \geq \vct{0} \quad \text{ and } \quad w_k + v_k < 0 \text{ for } k \text{ in } \{i,j\}.
	\]
	{\color{revisionBlue}Then there exist vectors $\vct{\hat{w}}, \vct{\hat{v}}$ in the conic hull of $\{\vct{w},\vct{v}\}$ which satisfy}
	\[
	\vct{\hat{w}} + \vct{\hat{v}} = \vct{w} + \vct{v} \quad \text{ and } \quad \hat{w}_j = \hat{v}_i = 0.
	\]
\end{lemma}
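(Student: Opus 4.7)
The strategy is to search for the pair $(\hat{\vct{w}}, \hat{\vct{v}})$ inside the two-dimensional conic hull of $\{\vct{w}, \vct{v}\}$. As a preparatory step, I would unpack the hypotheses: combined with $\vct{w}_{\setminus i}, \vct{v}_{\setminus j} \geq \vct{0}$ and $i \neq j$, the sum conditions force $w_i < 0 \leq v_i$ and $v_j < 0 \leq w_j$, together with the strict inequalities $v_i < -w_i$ and $w_j < -v_j$. In particular, $\vct{w}$ and $\vct{v}$ must be linearly independent, since a linear dependence $\vct{v} = c\vct{w}$ with the required sign pattern at coordinates $i$ and $j$ would force $c < 0$, and the two sum conditions would then require simultaneously $c > -1$ (from $w_i + v_i < 0$ and $w_i < 0$) and $c < -1$ (from $w_j + v_j < 0$ and $w_j > 0$).

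Using linear independence, I would parameterize $\hat{\vct{w}} = \alpha \vct{w} + \beta \vct{v}$ and $\hat{\vct{v}} = (1 - \alpha) \vct{w} + (1 - \beta) \vct{v}$ with $\alpha, \beta \in [0,1]$. Any such pair automatically lies in the conic hull and satisfies $\hat{\vct{w}} + \hat{\vct{v}} = \vct{w} + \vct{v}$. The vanishing requirements $\hat{w}_j = 0$ and $\hat{v}_i = 0$ then translate to the linear system
\[
\alpha w_j + \beta v_j = 0, \qquad \alpha w_i + \beta v_i = w_i + v_i.
\]
The determinant $w_i v_j - w_j v_i$ of this $2 \times 2$ system is strictly positive: from $v_i < -w_i$ and $w_j < -v_j$ one obtains $w_j v_i < (-v_j)(-w_i) = w_i v_j$. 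So the system has a unique solution.

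The remaining work is to verify that this solution lies in $[0,1]^2$. Solving explicitly yields
\[
\alpha = \frac{v_j(w_i + v_i)}{w_i v_j - w_j v_i}, \qquad \beta = \frac{-\alpha w_j}{v_j},
\]
with both numerator and denominator of $\alpha$ strictly positive, so $\alpha > 0$. The inequality $\alpha \leq 1$ reduces after clearing the positive denominator to $v_i(w_j + v_j) \leq 0$, which holds because $v_i \geq 0$ and $w_j + v_j < 0$. Then $\beta \geq 0$ is immediate from the signs, and $\beta \leq \alpha w_j / (-v_j) \leq w_j / (-v_j) < 1$ using $w_j < -v_j$.

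The main obstacle is the careful sign bookkeeping in this range check, since both strict inequalities $w_i + v_i < 0$ and $w_j + v_j < 0$ are needed — one to secure strict positivity of the determinant and the upper bound on $\beta$, the other for the upper bound on $\alpha$. The preliminary observation that linear dependence is incompatible with the hypotheses is what makes the convex-combination parameterization legitimate in the first place.
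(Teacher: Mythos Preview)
Your proof is correct and follows essentially the same approach as the paper: set up a linear system encoding the vanishing and sum constraints, show its determinant is positive via the sign information extracted from the hypotheses, then solve explicitly and verify the solution lies in the required range. The only cosmetic difference is that you first establish linear independence of $\vct{w},\vct{v}$ and thereby reduce to a $2\times 2$ system in $(\alpha,\beta)$, whereas the paper works directly with a $4\times 4$ system in $\vct{\lambda}\in\R^4$; your determinant $w_i v_j - w_j v_i$ coincides (after the paper's reindexing $i=1,j=2$) with the paper's $d = w_1 v_2 - v_1 w_2$, and the sign analyses are the same.
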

\begin{proof}
	By reindexing, take $i = 1$ and $j = 2$. Such $\vct{\hat{w}}, \vct{\hat{v}}$ exist if and only if some $\vct{\lambda}$ in $\mathbb{R}^4_+$ solves
	\begin{equation}
	\begin{bmatrix} w_1 & 0 & v_2 & 0 \\ 0 & w_2 & 0 & v_1 \\ 1 & 1 & 0 & 0 \\ 0 & 0 & 1 & 1 \end{bmatrix} \begin{bmatrix} \lambda_1 \\ \lambda_2 \\ \lambda_3 \\ \lambda_4\end{bmatrix} = \begin{bmatrix} 0 \\ 0 \\ 1 \\ 1 \end{bmatrix}. \label{eq:pairwiseNonnegSystem}
	\end{equation}
	The determinant of the matrix above is $d = w_1 v_2 - v_1 w_2$. If $w_2$ or $v_1 = 0$, then $d > 0$. If $w_2,v_1 \neq 0$, then $d > 0 \Leftrightarrow |v_2/w_2| \cdot  |w_1/v_1| > 1$. In this case we use the assumptions on $\vct{w},\vct{v}$ to establish the slightly stronger condition that $|v_2/w_2| > 1$ and $|w_1/v_1|  > 1$. In both cases we have a nonzero determinant, so there exists a unique $ \vct{\lambda} $ in $ \mathbb{R}^4$ satisfying system \eqref{eq:pairwiseNonnegSystem}. Now we need only prove that this $\vct{\lambda}$ is nonnegative.
	
	One may verify that the symbolic solution to \eqref{eq:pairwiseNonnegSystem} is 
	\begin{align*}
	& \lambda_1 = -(w_2 + v_2) v_1 / d, \quad
	\lambda_2 = (w_1 + v_1) v_2 /  d, \\
	&\lambda_3 = w_1 (w_2 + v_2) / d, \quad
	\lambda_4 = -(w_1 + v_1)w_2 / d,
	\end{align*}
	and furthermore that all numerators and denominators are nonnegative.
\end{proof}

{\color{revisionBlue}
\begin{proof}[Proof of Theorem \ref{thm:restrictSS}]
	Let $\vct{c}^\star$ be a vector in $\csage{\emat}$ with $k$ negative entries $c_1^\star,\ldots,c_k^\star$.
	It is clear that the AGE cones $\cage{\emat,i}$ satisfy the hypothesis of Lemma \ref{lem:k_neg_coeff_means_k_age_functions}, with $K = \cnns{\emat}$.
	Therefore there exists a $k$-by-$m$ matrix $\mtx{C}$ with $i^{\text{th}}$ row $\vct{c}_i \in \cage{\emat,i}$, and $\vct{c}^\star = \sum_{i=1}^k \vct{c}_i$.
	We prove the result by transforming $\mtx{C}$ into a matrix with rows $\vct{c}_i$ satisfying the required properties, using only row-sum preserving conic combinations from Lemma \ref{lem:nng_linear_span_transform}.
	
	It is clear that for any pair of distinct $i,j$, the vectors $\vct{c}_i, \vct{c}_j$ satisfy the hypothesis of Lemma \ref{lem:nng_linear_span_transform}, thus there exist $\vct{\hat{c}}_i,\vct{\hat{c}}_j$ in the conic hull of $\vct{c}_i, \vct{c}_j$ where $\hat{c}_{ij} = \hat{c}_{ji} = 0$ and $\vct{\hat{c}}_i + \vct{\hat{c}}_j = \vct{c}_i + \vct{c}_j$.
	Furthermore, this remains true if we modify $\mtx{C}$ by \textit{replacing} $(\vct{c}_i, \vct{c}_j) \leftarrow (\vct{\hat{c}}_i,\vct{\hat{c}}_j)$.
	
	We proceed algorithmically: apply Lemma \ref{lem:nng_linear_span_transform} to rows $(1,2)$, then $(1,3)$, and continuing to rows $(1,k)$.
	At each step of this process we eliminate $c_{j1} = 0$ for $j > 1$ and maintain $c_{ji} \geq 0$ for off-diagonal $c_{ji}$.
	We then apply the procedure to the second column of $\mtx{C}$, beginning with rows 2 and 3.
	Since $c_{j1} = 0$ for $j > 1$, none of the row operations introduce an additional nonzero in the first column of $\mtx{C}$, and so the first column remains zero below $c_{11}$, and the second column becomes zero below $c_{22}$.
	Following this pattern we reduce $\mtx{C}$ to have zeros on the strictly lower-triangular block in the first $k$ columns, in particular terminating with $c_{kk} = c_k^\star < 0$.
	
	The next phase is akin to back-substitution. Apply Lemma \ref{lem:nng_linear_span_transform} to rows $(k,k-1)$, then $(k,k-2)$, and continue until rows $(k, 1)$. This process zeros out the $k^{\text{th}}$ column of $\mtx{C}$ above $c_{kk}$.
	The same procedure applies with rows $(k-1,k-2)$, then $(k-1,k-3)$, through $(k-1,1)$, to zero the $(k-1)^{\text{st}}$ column of $\mtx{C}$ except for the single entry $c_{[k-1][k-1]} = c^\star_{k-1} < 0$.
	The end result of this process is that the first $k$ columns of $\mtx{C}$ comprise a diagonal matrix with entries $(c_1^\star,\ldots,c_k^\star) < \vct{0}$.
	
	The resulting matrix $\mtx{C}$ satisfies the claimed sparsity conditions.
	Since all row-operations involved conic combinations, each row of the resulting matrix $\mtx{C}$ defines a nonnegative signomial.
	The theorem follows since row $i$ of the resulting matrix has a single negative component $c_{ii} = c^
	\star_i < 0$.
\end{proof}
}  

\subsection{Proof of extreme ray characterization of the SAGE cone (Theorem \ref{thm:extreme_rays})\label{subsec:ExtremeRays}}

Because every ray in the SAGE cone (extreme or otherwise) can be written as a sum of rays in AGE cones, it suffices to characterize the extreme rays of AGE cones.
For the duration of this section we discuss the AGE cone $\cage{\emat,k}$, where $\evec_k$ is nonextremal in $\newpoly{\emat}$.

It can easily be shown that for any index $i$ in $[m]$, the ray $\{ r \vct{e}_i : r \geq 0 \}$ is extremal in $\cage{\emat,k}$. 
We call these these rays (those supported on a single coordinate) the \textit{trivial} extreme rays of the AGE cone.
The {\color{revisionBlue}work in showing} Theorem \ref{thm:extreme_rays} is to {\color{revisionBlue}prove} that all nontrivial extreme rays of the AGE cone are supported on simplicial circuits.
{\color{revisionBlue}Our proof will appeal to the following basic fact concerning polyhedral geometry, which we establish in the appendix.}

\begin{restatable}{lemma}{decomposeMixture}\label{lem:decompose_mixture}
    {\color{revisionBlue}Fix $\mtx{B} \in \R^{n \times d}$, $\vct{h} \in \R^n$, and $\Lambda = \{ \vct{\lambda} \in \Delta_d \,:\, \mtx{B}\vct{\lambda} = \vct{h}\}$.
    For any $\vct{\lambda} \in \Lambda$, there exist $\{\vct{\lambda}^{(i)}\}_{i=1}^\ell \subset \Lambda$ and $\vct{\theta} \in \Delta_\ell$ for which $\{\vct{b}_j \,:\, \lambda^{(i)}_j > 0\}$ are affinely independent, and $\vct{\lambda} = \sum_{i=1}^\ell \theta_i \vct{\lambda}^{(i)}$.}
\end{restatable}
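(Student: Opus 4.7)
The plan is to view $\Lambda$ as a bounded polyhedron and apply the standard fact that any point in a polytope is a convex combination of its vertices; the main content is then to identify the vertices of $\Lambda$ with exactly those $\vct{\lambda}$ whose supports correspond to affinely independent columns of $\mtx{B}$.

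First I would rewrite
\[
\Lambda \;=\; \left\{\, \vct{\lambda} \in \R^d_+ \;:\; \mtx{B}\vct{\lambda} = \vct{h}, \; \vct{1}^\intercal \vct{\lambda} = 1 \,\right\},
\]
which exhibits $\Lambda$ as a polyhedron contained in the simplex $\Delta_d$, hence closed and bounded, hence a polytope. By Minkowski's theorem every point of $\Lambda$ is a convex combination of finitely many extreme points; so there exist $\{\vct{\lambda}^{(i)}\}_{i=1}^\ell \subset \ext \Lambda$ and $\vct{\theta} \in \Delta_\ell$ with $\vct{\lambda} = \sum_{i=1}^\ell \theta_i \vct{\lambda}^{(i)}$.

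The key step is to verify that if $\vct{\lambda}^{(i)}$ is a vertex of $\Lambda$ with support $S_i = \{ j : \lambda^{(i)}_j > 0\}$, then $\{\vct{b}_j\}_{j \in S_i}$ is affinely independent. This is a textbook consequence of the basic-feasible-solution characterization: a point of a polyhedron defined by $\vct{\lambda} \geq \vct{0}$ together with linear equalities is a vertex if and only if the columns of the equality-constraint matrix indexed by $\supp \vct{\lambda}$ are linearly independent. Here the equality-constraint matrix is $\bigl[\begin{smallmatrix}\mtx{B} \\ \vct{1}^\intercal\end{smallmatrix}\bigr]$, and linear independence of its columns $\bigl(\begin{smallmatrix}\vct{b}_j \\ 1\end{smallmatrix}\bigr)$ indexed by $S_i$ is precisely affine independence of $\{\vct{b}_j\}_{j \in S_i}$. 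To make the argument self-contained one checks the contrapositive: if $\{\vct{b}_j\}_{j\in S_i}$ were affinely dependent, there would exist $\vct{\mu} \in \R^{S_i} \setminus \{\vct{0}\}$ with $\sum_{j\in S_i} \mu_j \vct{b}_j = \vct{0}$ and $\sum_{j\in S_i} \mu_j = 0$; extending $\vct{\mu}$ by zeros outside $S_i$ and using that $\lambda^{(i)}_j > 0$ on $S_i$ gives $\vct{\lambda}^{(i)} \pm \varepsilon \vct{\mu} \in \Lambda$ for sufficiently small $\varepsilon > 0$, contradicting extremality.

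Combining the two ingredients yields the claimed decomposition. The only nontrivial step is the vertex characterization above, and it is a standard linear-algebra argument rather than a real obstacle; I would expect the subtlety to be notational (keeping the lifted vector $\bigl(\begin{smallmatrix}\vct{b}_j \\ 1\end{smallmatrix}\bigr)$ vs.\ the affine picture straight) rather than conceptual.
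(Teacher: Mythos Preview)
Your argument is correct. You take a different route from the paper: you invoke Minkowski's theorem on the polytope $\Lambda$ and then identify its vertices via the basic-feasible-solution characterization, observing that linear independence of the lifted columns $\bigl(\begin{smallmatrix}\vct{b}_j \\ 1\end{smallmatrix}\bigr)$ is exactly affine independence of the $\vct{b}_j$. The paper instead argues constructively: given $\vct{\lambda}$ with nonsimplicial support, it uses Carath\'eodory to find one $\vct{\lambda}^{(1)} \in \Lambda$ with simplicial support inside $\supp\vct{\lambda}$, then performs a line search along the ray $\vct{\lambda}^{(1)} + t(\vct{\lambda}-\vct{\lambda}^{(1)})$ to hit the boundary of $\Delta_d$ at some $\vct{\lambda}^{(2)}$ with strictly smaller support, and recurses. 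Your approach is shorter and more conceptual; the paper's approach is more explicit and yields an actual procedure for producing the decomposition, which fits the algorithmic flavor of the surrounding section. Either is perfectly adequate for the lemma as stated.
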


\begin{proof}[Proof of Theorem \ref{thm:extreme_rays}]
    {\color{revisionBlue} We want to find an $\ell \in \N$ where we can decompose $\vct{c} \in \cage{\emat,k}$ as a sum of $\ell+1$ AGE vectors $\{\vct{c}^{(i)}\}_{i=1}^{\ell+1} \subset \cage{\emat,k}$, where $\{ \evec_j \,:\, c^{(i)}_j \neq 0 \}$ are simplicial circuits for $i \in [\ell]$, and $\vct{c}^{(\ell+1)}$ is elementwise nonnegative.
    Since $\vct{c}$ is an AGE vector, there is an associated $\vct{\nu} \in \R^{[m] \setminus k}_+$ for which $[\emat_{\setminus k} - \evec_{k}\vct{1}^\intercal]\vct{\nu}= \vct{0}$ and $\relent{\vct{\nu}}{e \vct{c}_{\setminus k}} \leq c_k$.
    If $\vct{\nu}$ is zero, then $\relent{\vct{\nu}}{e \vct{c}_{\setminus k}} = 0 \leq c_k$, so $\ell = 0$ and $\vct{c}^{(\ell+1)} = \vct{c}$ provides the required decomposition.
    The interesting case, of course, is when $\vct{\nu}$ is nonzero.
    Our proof proceeds by providing a mechanism to decompose $\vct{\nu}$ into a convex combination of certain vectors $\{\vct{\nu}^{(i)}\}_{i=1}^\ell$, and from there obtain suitable AGE vectors $\vct{c}^{(i)}$ from each $\vct{\nu}^{(i)}$.
    
    Given $\vct{\nu} \neq \vct{0}$, the vector $\vct{\lambda} \doteq \vct{\nu}/\vct{\nu}^\intercal \vct{1}$ belongs to the probability simplex $\Delta_{[m] \setminus k} \subset \R^{[m] \setminus k}$.
    We introduce this $\vct{\lambda}$ because $[\emat_{\setminus k} - \evec_{k}\vct{1}^\intercal]\vct{\nu}= \vct{0}$ is equivalent to $\emat_{\setminus k}\vct{\lambda}=\evec_{k}$, and the latter form is amenable to Lemma \ref{lem:decompose_mixture}.
    Apply Lemma \ref{lem:decompose_mixture} to decompose $\vct{\lambda}$ into a convex combination of vectors $\{\vct{\lambda}^{(i)}\}_{i=1}^\ell \subset \Delta_{[m]\setminus k}$ for which $\{\evec_j \,:\, j \in \supp \vct{\lambda}^{(i)}\}$ are simplicial and $\vct{\lambda}^{(i)}$ satisfy $\emat_{\setminus k}\vct{\lambda}^{(i)} = \evec_{k}$; let $\vct{\theta} \in \Delta_{\ell}$ denote the vector of convex combination coefficients for this decomposition of $\vct{\lambda}$.
    For each $\vct{\lambda}^{(i)}$, define $\vct{\nu}^{(i)} = \vct{\lambda}^{(i)}(\vct{\nu}^\intercal \vct{1})$.
    These values for $\vct{\nu}^{(i)}$ evidently satisfy $[\emat_{\setminus k} - \evec_{k}\vct{1}^\intercal]\vct{\nu}^{(i)}= \vct{0}$ and $\sum_{i=1}^\ell \theta_i \vct{\nu}^{(i)} = \vct{\nu}$.
    From these $\vct{\nu}^{(i)}$ we construct $\vct{c}^{(i)} \in \R^m$ by
    \begin{equation*}
        c^{(i)}_j = \begin{cases} (c_j / \nu_j)\nu^{(i)}_j &\text{ if } \nu_j > 0 \\
        0 & \text{ otherwise } \end{cases}
        \qquad \text{ for all } j \neq k,\label{eq:simplicialAgeVectors}
    \end{equation*}
    and for $j = k$ we take $c^{(i)}_k = \relent{\vct{\nu}^{(i)}}{ e\vct{c}^{(i)}_{\setminus k}}$.
    
    By construction these $\vct{c}^{(i)}$ belong to $\cage{\emat,k}$, and $\{ \evec_j \,:\, c^{(i)}_j \neq 0 \}$ comprise simplicial circuits.
    We now take a componentwise approach to showing $\sum_{i=1}^\ell \theta_i \vct{c}^{(i)} \leq \vct{c}$.
    For indices $j \neq k$ with $\nu_j > 0$, the inequality actually holds with \textit{equality} $\sum_{i=1}^\ell \theta_i c^{(i)}_j = (c_j / \nu_j) (\sum_{i=1}^\ell \theta_i \nu^{(i)}_j) = c_j$.
    Now we turn to showing $\sum_{i=1}^\ell \theta_i c^{(i)}_k \leq c_k$; we specifically claim that
    \begin{equation}
    \sum_{i=1}^\ell \theta_i c^{(i)}_k = \sum_{i=1}^\ell \theta_i \relent{\vct{\nu}^{(i)}}{e\vct{c}^{(i)}_{\setminus k}} = \relent{\vct{\nu}}{e\vct{c}_{\setminus k}} \leq c_k. \label{eq:ageDecompOnNegCoordinate}
    \end{equation}
    For the three relations in display \eqref{eq:ageDecompOnNegCoordinate}, the first holds from the definitions of $c^{(i)}_k$, and the last holds from our assumptions on $(\vct{c},\vct{\nu})$, so only the second equality needs explaining.
    For this we use the fact that definitions of $c^{(i)}_j$ relative to $\nu^{(i)}_j$ preserve ratios with $c_j$ relative to $\nu_j$, i.e.
    \begin{equation}
    \relent{\vct{\nu}^{(i)}}{e\vct{c}^{(i)}_{\setminus k}}
    = \sum_{j \neq k} \nu^{(i)}_j \ln\left(\frac{\nu^{(i)}_j}{e c^{(i)}_j}\right) = \sum_{j \neq k} \nu^{(i)}_j \ln\left(\frac{\nu_j}{e c_j}\right). \label{eq:ageDecompNegCoordinate_intermediate}
    \end{equation}
    One may then prove the middle equality in display \eqref{eq:ageDecompOnNegCoordinate} by summing $\theta_i \relent{\vct{\nu}^{(i)}}{\vct{c}_{\setminus k}}$ over $i$, applying the identity in equation \eqref{eq:ageDecompNegCoordinate_intermediate}, and then interchanging the sums over $i$ and $j$.
    Formally,
    \[
    \sum_{i=1}^\ell \theta_i \relent{\vct{\nu}^{(i)}}{e\vct{c}^{(i)}_{\setminus k}} =
    \sum_{j \neq k}\log\left(\frac{\nu_j}{ec_j}\right)\underbrace{\left(\sum_{i=1}^\ell\theta_i \nu^{(i)}_j \right)}_{=\nu_j} = \sum_{j \neq k} \nu_j \log\left(\frac{\nu_j}{ec_j}\right) = \relent{\vct{\nu}}{e\vct{c}_{\setminus k}}.
    \]
    
    We have effectively established the claim of the theorem.
    To find a decomposition of the form desired at the beginning of this proof, one rescales $\vct{c}^{(i)} \leftarrow \theta_i \vct{c}^{(i)}$ and sets $\vct{c}^{(\ell+1)} = \vct{c} - \sum_{i=1}^\ell \vct{c}^{(i)}$.}
\end{proof}

\section{The Role of Newton Polytopes in SAGE Signomials}\label{sec:structureSAGEandNNG}

This section begins by introducing two theorems (Theorems \ref{thm:linear_indep_and_all_nonext_neg_is_exact} and \ref{thm:needNonnegOfFaces}) concerning SAGE representability versus signomial nonnegativity. 
These theorems are then combined to obtain a third theorem (Theorem \ref{thm:SageEqNng}), which provides the most general yet-known conditions for when the SAGE and nonnegativity cones coincide.
The proofs of Theorems \ref{thm:linear_indep_and_all_nonext_neg_is_exact} and \ref{thm:needNonnegOfFaces}  are contained in Sections  \ref{subsec:proofOfThmLIANNE} and \ref{subsec:proofOfThmNeedNngOfFaces}.
Applications of Theorem \ref{thm:linear_indep_and_all_nonext_neg_is_exact}
are given in Section \ref{sec:SigOpt}.
{\color{revisionBlue}Section \ref{subsec:finiteError} uses a distinct proof strategy (nevertheless Newton-polytope based) to determine a condition on when SAGE can recognize signomials which are bounded below.}

\subsection{When SAGE recovers the nonnegativity cone}\label{subsec:newpolyMainResults}

The following theorem is the first instance beyond AGE functions when SAGE-representability is known to be equivalent to nonnegativity.

\begin{restatable}[]{theorem}{LIANNE}\label{thm:linear_indep_and_all_nonext_neg_is_exact}
	Suppose $\ext\newpoly{\emat}$ is simplicial, and that $\vct{c}$ has $c_i \leq 0$ whenever $\evec_i$ is nonextremal.
	Then $\vct{c} $ belongs to $\csage{\emat}$ iff $\vct{c}$ belongs to  $\cnns{\emat}$.
\end{restatable}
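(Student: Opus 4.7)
The forward implication is immediate since SAGE signomials are globally nonnegative. For the reverse, Proposition \ref{prop:sage_def_wrt_exts} forces $c_{i_p} \geq 0$ at each extreme exponent $\evec_{i_p}$, and the simplicial hypothesis ensures every nonextreme $\evec_{j_k}$ has a unique convex expansion $\evec_{j_k} = \sum_p \mu^{(k)}_p \evec_{i_p}$ with $\vct{\mu}^{(k)} \in \Delta_r$. Since $\cnns{\emat}$ and $\csage{\emat}$ are invariant under translation of the exponents and nonsingular linear reparameterization of $\vct{x}$ (both readily verifiable from the relative-entropy representation \eqref{eq:ageRelEnt}), I will assume $\evec_{i_1} = \vct{0}$ and that $\{\evec_{i_p}\}_{p=2}^r$ is the standard basis of $\R^{r-1}$, taking $r = n+1$ after projecting onto the affine hull of $\newpoly{\emat}$. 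The main case further assumes $c_{i_p} > 0$ and $c_{j_k} < 0$ strictly; degenerate cases will be dispatched by perturbation at the end.

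The plan is to extract the SAGE decomposition directly from a minimizer of $f$. Under the strict positivity assumption, $f$ is coercive: the weighted AM/GM inequality $\exp(\evec_{j_k}^\intercal\vct{x}) \leq \sum_p \mu^{(k)}_p \exp(\evec_{i_p}^\intercal\vct{x})$ bounds each nonextreme term by a convex combination of extreme terms, and the strict positivity of $c_{i_p}$ then drives $f \to +\infty$ as $\|\vct{x}\| \to \infty$. Thus $f$ attains its infimum at some $\vct{x}^\star \in \R^n$ with $f(\vct{x}^\star) \geq 0$. The first-order condition $\nabla f(\vct{x}^\star) = \vct{0}$, combined with the substitution $\evec_{j_k} = \sum_p \mu^{(k)}_p \evec_{i_p}$ and the affine independence of $\{\evec_{i_p}\}$, decouples coordinate-wise into the crucial identity
\[
    c_{i_p}\exp(\evec_{i_p}^\intercal\vct{x}^\star) = \sum_k \mu^{(k)}_p |c_{j_k}|\exp(\evec_{j_k}^\intercal\vct{x}^\star), \quad p = 2, \ldots, r.
\]
Define $a^{(k)}_p \doteq \mu^{(k)}_p |c_{j_k}| \exp((\evec_{j_k}-\evec_{i_p})^\intercal\vct{x}^\star)$ and the candidate AGE signomials $A_k(\vct{x}) \doteq \sum_p a^{(k)}_p \exp(\evec_{i_p}^\intercal\vct{x}) + c_{j_k}\exp(\evec_{j_k}^\intercal\vct{x})$. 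Each $A_k$ is nonnegative (hence AGE at $\evec_{j_k}$) by weighted AM/GM with weights $\vct{\mu}^{(k)}$, with equality attained precisely at $\vct{x}^\star$. The stationarity identity immediately yields $\sum_k a^{(k)}_p = c_{i_p}$ for $p \geq 2$, while a short algebraic manipulation using $\sum_p \mu^{(k)}_p = 1$ gives $\sum_k a^{(k)}_1 = c_{i_1} - f(\vct{x}^\star)$. Consequently $f - \sum_k A_k = f(\vct{x}^\star) \cdot \exp(\vct{0}^\intercal\vct{x})$ is the nonnegative constant signomial, and so $f = \sum_k A_k + f(\vct{x}^\star)$ is an explicit SAGE decomposition of $\vct{c}$.

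The hardest part will be dispatching the degenerate cases in which some extreme coefficient vanishes, or the infimum is only approached along a sequence heading to the boundary of the positive orthant in the substituted variables $\vct{y} = \exp\vct{x}$. My plan is to perturb $\vct{c}$ to $\vct{c}^{\epsilon} \doteq \vct{c} + \epsilon \sum_p \vct{e}_{i_p}$ for small $\epsilon > 0$: the perturbation preserves $\vct{c}^\epsilon \in \cnns{\emat}$ (adding a strictly positive signomial to a nonnegative one) and the sign hypotheses of the theorem, strictly positivizes every extreme coefficient, and so restores coercivity. Applying the main construction to $\vct{c}^{\epsilon}$ yields $\vct{c}^{\epsilon} \in \csage{\emat}$ for every $\epsilon > 0$, and passing $\epsilon \downarrow 0$ concludes by closedness of $\csage{\emat}$. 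The subtle technical point I foresee is verifying coercivity in the perturbed problem uniformly enough to localize the minimizer, and (relatedly) verifying that the closure operation is safe here—both should follow by invoking Theorem \ref{thm:restrictSS} together with standard arguments about closedness of finite sums of pointed closed convex cones.
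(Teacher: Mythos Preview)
Your constructive approach---reading off an explicit SAGE decomposition from the first-order conditions at a minimizer $\vct{x}^\star$---is genuinely different from the paper's, which works entirely on the dual side by manipulating the relative-entropy constraints in Problem~\eqref{prob:reduced_dual_Lemma2.3} (using simpliciality to force the $i=1$ constraints to equality, then a subgradient argument to collapse $\vct{\mu}_i\equiv\vct{\mu}_1$). Your computation with the $a^{(k)}_p$ is correct and gives exactly $f - f(\vct{x}^\star) = \sum_k A_k$ when $\vct{x}^\star$ exists.

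The gap is that $\vct{x}^\star$ need not exist, and your perturbation does not repair this. The coercivity claim is false as stated: sending all coordinates of $\vct{x}$ to $-\infty$ gives $f \to c_{i_1}$, not $+\infty$. More concretely, whenever a nonextremal $\evec_{j_k}$ lies on a proper face of the simplex, $f$ may have no critical point at all. Take $\emat$ with columns $(0,0),(1,0),(0,1),(\tfrac12,0)$ and $\vct{c}=(1,1,1,-1)$; then $\partial_{x_2}f=e^{x_2}>0$ everywhere, the infimum $3/4$ is approached only as $x_2 \to -\infty$, and your perturbation $c_{i_p}\mapsto c_{i_p}+\epsilon$ still leaves $\partial_{x_2}f^\epsilon=(1+\epsilon)e^{x_2}>0$. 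A clean repair is to work in $\vct{y}=\exp\vct{x}$, where $f$ extends continuously to $\R^n_+$ and is genuinely coercive (the positive linear terms dominate the sublinear $\vct{y}^{\evec_{j_k}}$); if the minimum over $\R^n_+$ lands on a boundary face $\{y_q=0\}$, drop the corresponding extremal vertex $\vct{e}_q$ and induct on $r$ (the restriction to that face is nonnegative by Lemma~\ref{lem:ifgisface}, and the leftover term $c_{i_{q+1}}e^{x_q}$ is trivially AGE).
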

\textcolor{revisionBlue}{Our proof of the theorem (Section \ref{subsec:proofOfThmLIANNE}) uses convex duality in a central way, and provides intuition for why the theorem's assumptions are needed.
Section \ref{sec:constructCounterExAndDualChar} provides counter-examples to relaxations of Theorem \ref{thm:linear_indep_and_all_nonext_neg_is_exact} obtained through weaker hypothesis.}

This section's next theorem (proven in Section \ref{subsec:proofOfThmNeedNngOfFaces}) concerns conditions on $\emat$ for when the SAGE and nonnegativity cones can be expressed as a Cartesian product of simpler sets. 
{\color{revisionBlue}To aid in exposition we introduce a definition: a matrix $\emat$ can be \textit{partitioned into $k$ faces} if by a permutation of its columns it can be written as a concatenation $\emat =  [\emat^{(1)},\ldots,\emat^{(k)}]$, where $\emat^{(i)}$ are submatrices of $\emat$ and $\{\newpoly{\emat^{(i)}}\}_{i=1}^k$ are mutually disjoint faces of $\newpoly{\emat}$.
}

\begin{theorem}	
	If $\{\emat^{(i)} \}_{i=1}^k$ are matrices partitioning {\color{revisionBlue}the block matrix} $\emat = [\emat^{(1)},\ldots,\emat^{(k)}]$, then
	$\cnns{\emat}= \oplus_{i=1}^k \cnns{\emat^{(i)}}$ and $\csage{\emat} = \oplus_{i=1}^k \csage{\emat^{(i)}}$. \label{thm:needNonnegOfFaces}
\end{theorem}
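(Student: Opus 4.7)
The plan is to prove both equalities by handling the two inclusions separately. The easy direction $\supseteq$ in both equations is essentially by construction: if $\vct{c}^{(i)} \in \cnns{\emat^{(i)}}$ for each $i$, then $\Sig(\emat,\vct{c}) = \sum_i \Sig(\emat^{(i)},\vct{c}^{(i)})$ is a sum of nonnegative functions; analogously for SAGE, AGE decompositions on each face can be zero-padded to indices outside $J_i \doteq \{j : \evec_j \text{ is a column of } \emat^{(i)}\}$, producing an AGE decomposition over $\emat$.

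The main obstacle is the NNS inclusion $\cnns{\emat} \subseteq \oplus_{i=1}^k \cnns{\emat^{(i)}}$, and I would establish it via a limit argument using linear functionals that expose each face. Since $F_i \doteq \newpoly{\emat^{(i)}}$ is a face of $\newpoly{\emat}$, there exist $\vct{d}_i \in \R^n$ and $\mu_i \in \R$ with $\vct{d}_i^\intercal \evec_j \leq \mu_i$ for all $j \in [m]$ and equality iff $\evec_j \in F_i$. I would first justify that, because the $\{F_i\}$ are \emph{mutually disjoint} faces, the equality case here occurs precisely when $j \in J_i$: if $\evec_j$ with $j \notin J_i$ attained equality it would lie in $F_i$, but it also lies in $F_{i'}$ for the unique $i'$ with $j \in J_{i'}$, contradicting disjointness. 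Then, for any $\vct{c} \in \cnns{\emat}$ and $\vct{x} \in \R^n$,
\[
0 \;\leq\; \Sig(\emat,\vct{c})(\vct{x}+t\vct{d}_i)\exp(-t\mu_i) \;=\; \sum_{j=1}^m c_j \exp(\evec_j^\intercal \vct{x})\exp\!\bigl(t(\vct{d}_i^\intercal \evec_j - \mu_i)\bigr)
\]
for all $t \geq 0$; as $t \to \infty$, every term with $j \notin J_i$ vanishes and the limit is exactly $\Sig(\emat^{(i)},\vct{c}^{(i)})(\vct{x})$, which must therefore be nonnegative. This yields $\vct{c}^{(i)} \in \cnns{\emat^{(i)}}$ for every $i$.

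Finally, for the SAGE inclusion $\csage{\emat} \subseteq \oplus_{i=1}^k \csage{\emat^{(i)}}$, I would combine the NNS result with the defining structure of AGE cones. Decompose $\vct{c} \in \csage{\emat}$ as $\vct{c} = \sum_{k=1}^m \vct{a}^{(k)}$ with $\vct{a}^{(k)} \in \cage{\emat,k}$, and let $i(k)$ denote the unique face index with $k \in J_{i(k)}$. Since each $\vct{a}^{(k)}$ lies in $\cnns{\emat}$, the NNS part of the theorem provides restrictions $\vct{a}^{(k,i)} \in \cnns{\emat^{(i)}}$. For $i \neq i(k)$, the only potentially negative entry of $\vct{a}^{(k)}$ sits at coordinate $k \notin J_i$, so $\vct{a}^{(k,i)} \geq \vct{0}$ and trivially lies in $\csage{\emat^{(i)}}$; for $i = i(k)$, the restriction retains at most one negative coordinate and hence lies in the corresponding AGE cone of $\emat^{(i(k))}$, hence in $\csage{\emat^{(i(k))}}$. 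Summing over $k$ gives $\vct{c}^{(i)} = \sum_k \vct{a}^{(k,i)} \in \csage{\emat^{(i)}}$, completing the argument.
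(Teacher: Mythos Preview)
Your proof is correct. For the $\cnns{\emat}$ equality, your limiting argument with a face-exposing linear functional is precisely the content of the paper's Lemma~\ref{lem:ifgisface}, which the paper invokes as a black box; you have simply inlined it (and in fact streamlined it by building in the normalization factor $\exp(-t\mu_i)$, avoiding the paper's case split on whether the face passes through the origin).

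The one place you diverge is in the inclusion $\csage{\emat} \subseteq \oplus_i \csage{\emat^{(i)}}$. The paper declares the entire SAGE equality ``clear from the definition of the SAGE cone'' and spends no further words on it, whereas you route this direction through the $\cnns{\cdot}$ decomposition you just established. Your argument is valid, but it is worth seeing the more direct reason the paper presumably has in mind: if $\vct{a}^{(k)}\in\cage{\emat,k}$ with $k\in J_{i_0}$, then any certificate $\vct{\nu}$ in the relative-entropy representation \eqref{eq:ageRelEnt} satisfies $[\emat_{\setminus k}-\evec_k\vct{1}^\intercal]\vct{\nu}=\vct{0}$, which (after normalizing) writes $\evec_k\in F_{i_0}$ as a convex combination of columns of $\emat$. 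Because $F_{i_0}$ is a face of $\newpoly{\emat}$, every $\evec_j$ appearing with positive weight must already lie in $F_{i_0}$, so $\vct{\nu}$ is automatically supported on $J_{i_0}$ and restricts verbatim to an AGE certificate over $\emat^{(i_0)}$; the coordinates of $\vct{a}^{(k)}$ outside $J_{i_0}$ are nonnegative and can be absorbed into the other blocks. This avoids the detour through $\cnns{\cdot}$ entirely. Your route has the minor advantage of not re-opening the AGE representation, at the cost of making the SAGE statement logically downstream of the NNS statement rather than independent of it.
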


{\color{revisionBlue}The following figure illustrates partitioning a matrix $\emat$ where $\ext \newpoly{\emat}$ are the vertices of the truncated icosahedron, and nonextremal terms (marked in red) lay in the relative interiors of certain pentagonal faces.}
\begin{center}
	\includegraphics[width=0.4\textwidth]{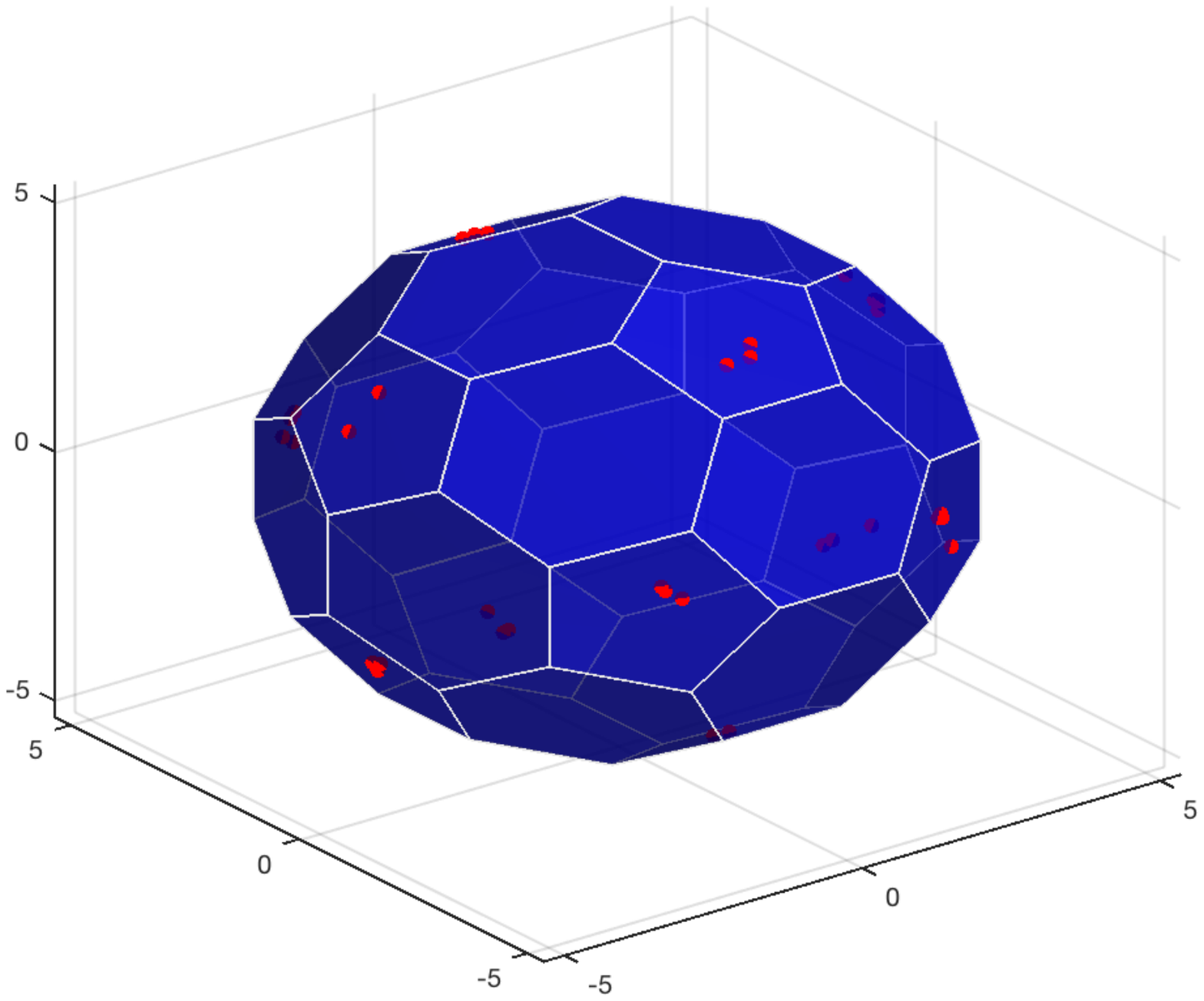}
	\hspace{0.5in}\includegraphics[width=0.4\textwidth]{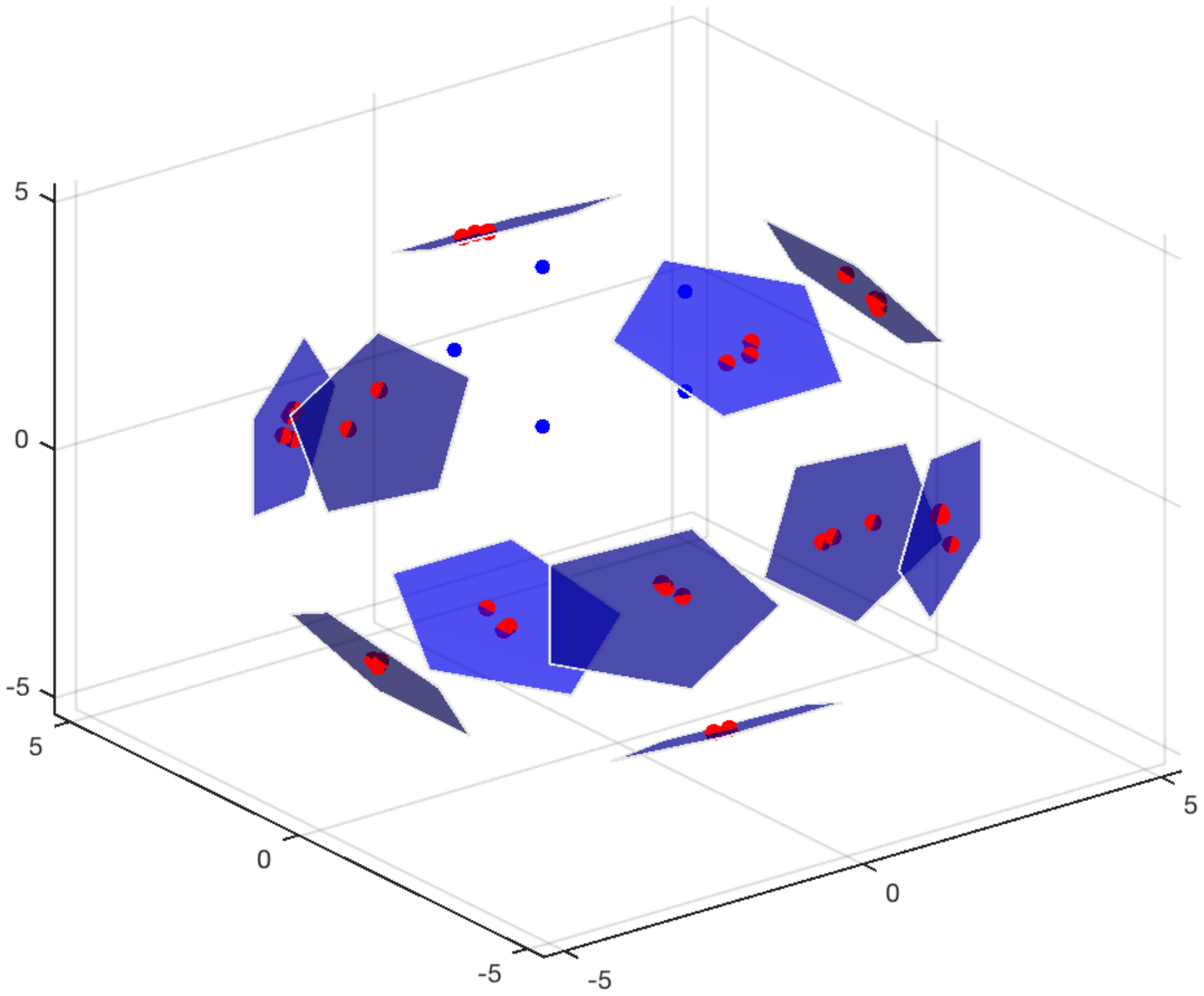}
\end{center}
Note that every matrix $\emat$ admits the trivial partition with $k = 1$.
In fact, a natural regularity condition (one that we consider in Section \ref{sec:constructCounterExAndDualChar}) would be that $\emat$ \textit{only} admits the trivial partition.
Regularity conditions aside, Theorems \ref{thm:linear_indep_and_all_nonext_neg_is_exact} and  \ref{thm:needNonnegOfFaces} can be combined with known properties of AGE functions to establish new conditions for when the {\color{revisionBlue}SAGE and nonnegativity cones coincide}. 

\begin{restatable}[]{theorem}{SageEqNng}\label{thm:SageEqNng}
	Suppose $\emat$ can be partitioned into faces where (1)
	simplicial {\color{revisionBlue}faces contain} at most two nonextremal exponents,
	and (2) all other faces contain at most one nonextremal exponent. Then $\csage{\emat} = \cnns{\emat}$.
\end{restatable}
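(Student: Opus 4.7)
The plan is to first apply Theorem~\ref{thm:needNonnegOfFaces} to decompose the problem face-by-face. Since $\csage{\emat} = \oplus_i \csage{\emat^{(i)}}$ and $\cnns{\emat} = \oplus_i \cnns{\emat^{(i)}}$, it suffices to show $\csage{\emat^{(i)}} = \cnns{\emat^{(i)}}$ for each piece $\emat^{(i)}$ in the hypothesized partition. Because SAGE functions are globally nonnegative, the inclusion $\csage{\emat^{(i)}} \subseteq \cnns{\emat^{(i)}}$ is immediate, so I would focus the remaining effort on the reverse inclusion on each face.

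On each face I would split into cases by the number of nonextremal exponents. If $\emat^{(i)}$ has no nonextremal exponents, then Proposition~\ref{prop:sage_def_wrt_exts} gives $\cnns{\emat^{(i)}}$ equal to the nonnegative orthant, which is contained in $\csage{\emat^{(i)}}$ trivially. If $\emat^{(i)}$ has exactly one nonextremal exponent $\evec_k$, then any $\vct{c} \in \cnns{\emat^{(i)}}$ has $c_j \geq 0$ for all $j \neq k$ by Proposition~\ref{prop:sage_def_wrt_exts}, so $\vct{c}$ is by definition an AGE vector (with $k$ as its distinguished index), and hence SAGE. This already handles every non-simplicial piece of the partition, by the theorem's hypothesis.

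The interesting case is a simplicial face $\emat^{(i)}$ carrying exactly two nonextremal exponents, say $\evec_1$ and $\evec_2$. For $\vct{c} \in \cnns{\emat^{(i)}}$, Proposition~\ref{prop:sage_def_wrt_exts} ensures $c_j \geq 0$ for $j \notin \{1,2\}$, and I would branch on the signs of $c_1$ and $c_2$. If both are nonnegative, then $\vct{c}$ lies in the nonnegative orthant. If exactly one is negative, then $\vct{c}$ has a single negative entry and is by definition an AGE vector. If both are negative, then every nonextremal coordinate of $\vct{c}$ is nonpositive; since the face is simplicial by hypothesis, the premises of Theorem~\ref{thm:linear_indep_and_all_nonext_neg_is_exact} are satisfied and yield $\vct{c} \in \csage{\emat^{(i)}}$.

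The case analysis is exhaustive under the partition hypothesis, so the real content lies in this last subcase, where all the heavy lifting has already been delegated to Theorem~\ref{thm:linear_indep_and_all_nonext_neg_is_exact}. I do not anticipate any hidden obstacle: apart from invoking that theorem, every step reduces to a direct application of Proposition~\ref{prop:sage_def_wrt_exts} together with the observation that a nonnegative signomial with at most one negative coefficient is an AGE vector by definition~\eqref{eq:defAGE}.
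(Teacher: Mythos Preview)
Your proposal is correct and follows essentially the same approach as the paper: reduce to individual faces via Theorem~\ref{thm:needNonnegOfFaces}, use Proposition~\ref{prop:sage_def_wrt_exts} to bound the number of negative entries, handle the at-most-one-negative case via the definition of AGE, and invoke Theorem~\ref{thm:linear_indep_and_all_nonext_neg_is_exact} for the simplicial two-negative case. The only cosmetic difference is that you organize the case split by the number of nonextremal exponents rather than by the number of negative coefficients, but the logic is identical.
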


\begin{proof}
	Let $\emat$ satisfy the assumptions of Theorem \ref{thm:SageEqNng} with associated faces $\{F_i\}_{i=1}^k$ and column blocks $\emat^{(i)}$, and fix $\vct{c} $ in $ \cnns{\emat}$. For $i$ in $[k]$, define the vector $\vct{c}^{(i)}$ so that $\vct{c} = \oplus_{i=1}^k \vct{c}^{(i)}$. By Theorem \ref{thm:needNonnegOfFaces},  the condition $\csage{\emat} =\cnns{\emat}$ holds if and only if $\csage{\emat^{(i)}}=\cnns{\emat^{(i)}}$ for all $i$ in $[k]$. Because we assumed that $\vct{c} $ belongs to $\cnns{\emat}$ it suffices to show that each $\vct{c}^{(i)} $ belongs to $\csage{\emat^{(i)}}$.
	
	{\color{revisionBlue}Per Proposition \ref{prop:sage_def_wrt_exts}, any vector $\vct{c}^{(i)} \in \cnns{\emat^{(i)}}$ cannot have a negative entry $c^{(i)}_j$ when $\evec^{(i)}_j$ is extremal in $\newpoly{\emat^{(i)}}$.
	By assumption, $\emat^{(i)}$ has at most two nonextremal terms, and so $\vct{c}^{(i)} \in \cnns{\emat^{(i)}}$ can have at most two negative entries.}
	If $\vct{c}^{(i)}$ has at most one negative entry, then $\vct{c}^{(i)}$ is \textcolor{revisionBlue}{an AGE vector}.
	If on the other hand $\vct{c}^{(i)}$ has two negative entries {\color{revisionBlue}$c_j^{(i)}$}, then both of these entries must correspond to nonextremal $\evec_j$, and $F_i$ must be simplicial. 
	This allows us to invoke Theorem \ref{thm:linear_indep_and_all_nonext_neg_is_exact} on $\vct{c}^{(i)}$ to conclude $\vct{c}^{(i)} \in \csage{\emat^{(i)}}$. 
	The result follows.
\end{proof}

\subsection{Simplicial sign patterns for SAGE versus nonnegativity (Theorem \ref{thm:linear_indep_and_all_nonext_neg_is_exact})}\label{subsec:proofOfThmLIANNE}

The proof of Theorem \ref{thm:linear_indep_and_all_nonext_neg_is_exact} begins by exploiting two key facts about signomials and SAGE relaxations: (1) that $\csage{\emat}$ and $\cnns{\emat}$ are invariant under translation of the exponent set $\emat$, and (2) that strong duality \textit{always} holds when computing $f_{\mathsf{SAGE}}$. 
These properties allow us to reduce the problem of checking SAGE decomposability to the problem of exactness of a convex relaxation for a signomial optimization problem.

\begin{proof}[Proof of Theorem \ref{thm:linear_indep_and_all_nonext_neg_is_exact}]
	Begin by translating $\emat $ in $ \mathbb{R}^{n \times m}$ to $\emat \leftarrow \emat - \evec_j \mathbf{1}^\intercal$ where $\evec_j$ is an arbitrary extremal element of $\newpoly{\emat}$.
	{\color{revisionBlue}Next}, permute the columns of $\emat$ so that $\evec_1 = \vct{0}$.
	Fix $\vct{c} $ in $\cnns{\emat}$ and define $f = \Sig(\emat, \vct{c})$ so that $f^\star \geq 0$.
	We show that $f_{\mathsf{SAGE}}= f^\star$, thereby establishing {\color{revisionBlue}$\vct{c} \in \csage{\emat}$}.
	
	Let $N = \{ i : c_i \leq 0 \}$ {\color{revisionBlue}and $E = [m] \setminus N$}; apply Corollary \ref{cor:sagerelax_restrictss_primal} {\color{revisionBlue}with Proposition \ref{prop:strong_duality}} to obtain
	\begin{align}
	f_{\mathsf{SAGE}} =~ \inf~&~\vct{c}^\intercal \vct{v} \label{prob:reduced_dual_Lemma2.3}\\
	\text{s.t.} ~& ~\vct{v} \text{ in } \mathbb{R}^{m}_{++} \text{ has } v_1 = 1, {\color{revisionBlue}\text{ and there exist } \{\vct{\mu}_i\}_{i \in N \cup \{1\}} \subset \R^n} \text{ with }\nonumber  \\
	&~v_i \ln(v_i/v_j) \leq (\evec_i- \evec_j)^\intercal \vct{\mu}_i \text{ for }  j \text{ in } {\color{revisionBlue}E} \text{ and } i \text{ in } N \cup \{1\} \nonumber .
	\end{align}
	In order to show $f_{\mathsf{SAGE}}= f^\star$, we reformulate \eqref{prob:reduced_dual_Lemma2.3} as the problem of computing $f^\star$ by appropriate changes of variables and constraints.
	
	We begin with a change of constraints.
	By the assumption that $c_i \leq 0$ for all nonextremal $\evec_i$, the set $E$ satisfies $\{ \evec_i \}_{i \in E} \subset \ext \newpoly{\emat}$.
	Combine {\color{revisionBlue}this with extremality of $\vct{0}  = \evec_1$} and the assumption that $\ext \newpoly{\emat}$ is simplicial to conclude that $\{ \evec_i : i \text{ in } E \setminus \{1 \}\}$ are  linearly independent.
	The linear independence of these vectors ensures that for fixed $\vct{v}$ we can always choose $\vct{\mu}_1$ to satisfy the following constraints \textit{with equality}
	\begin{equation*}
	v_1 \ln(v_1/v_j) \leq (\evec_1- \evec_j)^\intercal \vct{\mu}_1 \text{ for all }  j \text{ in } {\color{revisionBlue}E}.
	\end{equation*}
	Therefore we can equivalently reformulate $f_{\mathsf{SAGE}}$ as 
	\begin{align*}
	f_{\mathsf{SAGE}} = \inf ~&~\vct{c}^\intercal \vct{v} \\
	\text{s.t.} ~&~\vct{v} \text{ in } \mathbb{R}^{m}_{++} \text{ has } v_1 = 1,{\color{revisionBlue}\text{ and there exist } \{\vct{\mu}_i\}_{i \in N \cup \{1\}} \subset \R^n}  \\
	&\text{ with } \ln (v_j) = \evec_j^\intercal \vct{\mu}_1 \text{ for all } j \text{ in } E, \text{ and } \\
	&~v_i \ln(v_i/v_j) \leq (\evec_i- \evec_j)^\intercal \vct{\mu}_i  \text{ for } j \text{ in } E,~ i \text{ in } N.
	\end{align*}
	
	{\color{revisionBlue}Next we rewrite the constraint} $v_i \ln(v_i/v_j) \leq (\evec_i- \evec_j)^\intercal \vct{\mu}_i$ as $\ln(v_i)-\ln(v_j) \leq (\evec_i- \evec_j)^\intercal \vct{\mu}_i $ by absorbing $v_i$ into $\vct{\mu}_i$.
	If we also substitute the expression for $\ln(v_j)$ given by the equality constraints, then the inequality constraints become
	\begin{equation}
	\ln(v_i) \leq  \evec_i^\intercal \vct{\mu}_i +  \evec_j^\intercal(\vct{\mu}_1 - \vct{\mu}_i)  \text{ for all  }  j \text{ in } E,~i \text{ in } N. \label{eq:inequalityConstrsAfterDivideOutAndSubstituteVj}
	\end{equation}
	{\color{revisionBlue}We now show that for} every $i$ in $N$, the choice $\vct{\mu}_i = \vct{\mu}_1$ makes these inequality constraints as loose as possible.
	
	Towards this end, define $\psi_i(\vct{x}) =  \evec_i^\intercal  \vct{x} + \min_{j \in E} \{ \evec_j^\intercal ( \vct{\mu}_1 - \vct{x})  \}$; note that for fixed $i$ and $\vct{\mu}_i$, the number $\psi_i(\vct{\mu}_i)$ is the minimum over all $|E|$ right hand sides in \eqref{eq:inequalityConstrsAfterDivideOutAndSubstituteVj}.
	It is easy to verify that $\psi_i$ is concave, and because of this 
	we know that $\psi_i$ is maximized at $\vct{x}^\star$ if and only if $\vct{0} \in (\partial \psi_i)(\vct{x}^\star)$.
	Standard subgradient calculus tells us that $(\partial \psi_i)(\vct{x})$ is precisely the convex hull of vectors $\evec_i - \evec_k$ where $k$ is an index at which the minimum (over $j \in E$) is obtained.
	Therefore $(\partial \psi_i)(\vct{\mu}_1) = \conv\{ \evec_i - \evec_j : j \text{ in } E \}$, and this set must contain the zero vector (unless perhaps $c_i = 0$, in which case the constraints on $v_i$ are inconsequential).
	Hence $\max_{\vct{x} \in \mathbb{R}^n} \{\psi_i(\vct{x}) \} =  \evec_i^\intercal \vct{\mu}_1$, 
	and so inequality constraints \eqref{eq:inequalityConstrsAfterDivideOutAndSubstituteVj} reduce to
	\begin{equation}
	\ln(v_i) \leq  \evec_i^\intercal \vct{\mu}_1  \text{ for all }  i  \text{ in } N. \label{eq:constraintsSimplifiedExactnessLemma}
	\end{equation}
	
	Since the objective $\vct{c}^\intercal \vct{v}$ is decreasing in $v_i$ for $i $ in $N$, 
	we can actually take the constraints in \eqref{eq:constraintsSimplifiedExactnessLemma} to be binding. 
	We established much earlier that $v_i = \exp \evec_i^\intercal \vct{\mu}_1$ for $i $ in $E$. 
	Taking these together we see $v_i = \exp \evec_i^\intercal \vct{\mu}_1$ for all $i $, and so 
	\begin{equation}
    	f_{\mathsf{SAGE}} = \inf\{ ~\textstyle\sum_{i=1}^m c_i \exp  \evec_i^\intercal \vct{\mu}_1 ~:~ \vct{\mu}_1 \text{ in } \mathbb{R}^n  \} = f^\star
	\end{equation}
	as required. 
\end{proof}

Let us now recap how the assumptions of Theorem \ref{thm:linear_indep_and_all_nonext_neg_is_exact} were used at various stages in the proof.
For one thing, all discussion up to and including the statement of Problem \eqref{prob:reduced_dual_Lemma2.3} was fully general; the expression for $f_{\mathsf{SAGE}}$ used none of the assumptions of the theorem.
The next step was to use linear independence of nonzero extreme points to allow us to satisfy  $v_1 \ln(v_1/v_j) \leq (\evec_1 - \evec_j)^\intercal \vct{\mu}_1$ with equality.
The reader can verify that if we did not have linear independence but we \textit{were} told that those constraints were binding at the optimal $\vct{v}^\star$, then we would still have $f_{\mathsf{SAGE}} = f^\star$ under the stated sign pattern assumption on $\vct{c}$.
{\color{revisionBlue}Note how the sign pattern assumption on $\vct{c}$ was only really used to replace $\ln(v_i) \leq \evec_i^\intercal \vct{\mu}_1$ from \eqref{eq:constraintsSimplifiedExactnessLemma} by $\ln(v_i) = \evec_i^\intercal \vct{\mu}_i$.}

\subsection{Proof of the partitioning theorem (Theorem \ref{thm:needNonnegOfFaces})}\label{subsec:proofOfThmNeedNngOfFaces}

\textcolor{revisionBlue}{The following} lemma adapts claim (iv) from Theorem 3.6 of Reznick \cite{Reznick1989} to signomials.
Because the lemma is important for our subsequent theorems, {\color{revisionBlue}the appendix contains a more} complete proof than can be found in Reznick's \cite{Reznick1989}. 
As a matter of notation: for any $F \subset \{\evec_i\}_{i=1}^m$, write $\Sig_F(\emat, \vct{c})$ to mean the signomial with exponents $\evec_i$ in $F$ and corresponding coefficients $c_i$.

\begin{restatable}{lemma}{partitionLemma}\label{lem:ifgisface}
	If $F$ is a face of $\newpoly{\emat}$ then $\Sig_F(\emat,\vct{c})^\star < 0$ implies $\Sig(\emat,\vct{c})^\star < 0$.
\end{restatable}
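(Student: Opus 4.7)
The plan is to use a supporting hyperplane to the face $F$ to construct a ``perturbation direction'' along which the non-$F$ terms decay exponentially faster than the $F$-terms, so that the signomial's sign is dictated asymptotically by $\Sig_F(\emat,\vct{c})$.

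First, I would invoke the standard characterization of faces of polytopes to obtain $\vct{\mu} \in \R^n$ and $\beta \in \R$ such that $\evec_i^\intercal \vct{\mu} = \beta$ whenever $\evec_i \in F$, while $\evec_i^\intercal \vct{\mu} < \beta$ whenever $\evec_i \notin F$ (the case $F = \newpoly{\emat}$ is trivial and can be handled with $\vct{\mu} = \vct{0}$). Since $\Sig_F(\emat,\vct{c})^\star < 0$, choose a witness $\vct{x}^\star \in \R^n$ with $\Sig_F(\emat,\vct{c})(\vct{x}^\star) < 0$.

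Next, I would consider the one-parameter family $\vct{x}(t) \doteq \vct{x}^\star + t\vct{\mu}$ for $t \geq 0$ and compute
\[
\exp(-t\beta)\,\Sig(\emat,\vct{c})(\vct{x}(t)) = \sum_{\evec_i \in F} c_i \exp(\evec_i^\intercal \vct{x}^\star) + \sum_{\evec_i \notin F} c_i \exp(\evec_i^\intercal \vct{x}^\star)\exp\bigl(t(\evec_i^\intercal \vct{\mu} - \beta)\bigr).
\]
By the choice of $\vct{\mu}$, every term in the second sum has $\evec_i^\intercal \vct{\mu} - \beta < 0$, so the second sum vanishes as $t \to \infty$, while the first sum is exactly $\Sig_F(\emat,\vct{c})(\vct{x}^\star) < 0$. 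Therefore for all sufficiently large $t$, the left-hand side is negative, which (since $\exp(-t\beta) > 0$) forces $\Sig(\emat,\vct{c})(\vct{x}(t)) < 0$, establishing $\Sig(\emat,\vct{c})^\star < 0$.

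There is no real obstacle here; the only point requiring minor care is ensuring the strict inequality $\evec_i^\intercal \vct{\mu} < \beta$ for all exponents $\evec_i$ outside $F$ (not merely for extreme points of $\newpoly{\emat}$ outside $F$). This follows because any $\evec_i \notin F$ lies in $\newpoly{\emat} \setminus F$ and thus is separated from $F$ by the supporting hyperplane; the finite list of exponents then guarantees a uniform gap, so the decay in the second sum is uniform in $i$ and the limit argument goes through without issue.
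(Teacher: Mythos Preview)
Your proof is correct and rests on the same idea as the paper's: push along the normal of a supporting hyperplane for $F$ so that the non-$F$ terms decay relative to the $F$ terms. The paper, however, builds its normal as a sum $\vct{s}=\sum_{i=1}^\ell \vct{s}_i$ over $\ell$ supporting hyperplanes (with $\ell$ the codimension of the affine hull of $F$) and then performs a case split on whether the resulting offset $r=\sum_i r_i$ is zero or positive; in the $r>0$ case it additionally passes to an orthogonal decomposition $\vct{x}=t\vct{\hat s}+\vct{y}$ and ends up proving the stronger (but unneeded) conclusion $f^\star=-\infty$. Your argument is more elementary on two counts: you use a single exposing hyperplane directly (legitimate since every face of a polytope is exposed), and your normalization by $\exp(-t\beta)$ handles all values of $\beta$ uniformly, eliminating the case analysis and the orthogonal-decomposition bookkeeping.
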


\begin{proof}[Proof of Theorem \ref{thm:needNonnegOfFaces}]
	Let $\emat$ have partition $\emat =  [\emat^{(1)}, \ldots, \emat^{(k)}]$, {\color{revisionBlue}where the submatrices $\emat^{(i)}$ have sizes $n \times m_i$ and $\sum_{i=1}^k m_i = m$}.
	It is clear from the definition of the SAGE cone that $\csage{\emat} = \oplus_{i=1}^k \csage{\emat^{(i)}}$. 
	The bulk of this proof is to show that $\cnns{\emat}$ admits the same decomposition.
	
	Let $f = \Sig(\emat,\vct{c})$ for some $\vct{c}$ in $\mathbb{R}^m$.
	The vector $\vct{c}$ is naturally decomposed into $\vct{c} = \oplus_{i=1}^k \vct{c}^{(i)}$ {\color{revisionBlue}where $\vct{c}^{(i)} \in \R^{m_i}$ align with $\emat^{(i)}$.}
	For each $i$ in $[k]$ define $f_i = \Sig(\emat^{(i)}, \vct{c}^{(i)})$ so that $f = \sum_{i=1}^k f_i$. 
	If any $f_i^\star$ is negative, then Lemma \ref{lem:ifgisface} tells us that $f^\star$ must also be negative.
	Meanwhile if all $f_i^\star$ are nonnegative, then the same must be true of $f^\star \geq \sum_{i=1}^k f_i^\star$. The result follows. 
\end{proof}

\subsection{Corollaries for signomial programming}\label{sec:SigOpt}

Signomial minimization is naturally related via duality to checking signomial nonnegativity.
Thus we build on groundwork laid in Sections \ref{sec:SAGEcontributions} and \ref{sec:structureSAGEandNNG} to obtain consequences for signomial minimization.

\begin{corollary}\label{cor:simplicial_optimization_zero_anywhere}
	Assume $\newpoly{\emat}$ is simplicial \textcolor{revisionBlue}{with $\evec_1 = \vct{0}$}, and that nonzero nonextremal $\evec_i$ have $c_i \leq 0$.
	Then either $f_{\mathsf{SAGE}} = f^\star$, or $f^\star \in (f_{\mathsf{SAGE}}, ~c_1)$.
\end{corollary}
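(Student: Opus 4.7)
The plan is to analyze $f_{\mathsf{SAGE}}$ by applying Theorem \ref{thm:linear_indep_and_all_nonext_neg_is_exact} to the shifted coefficient vector $\vct{c} - \gamma\vct{e}_1$, splitting on whether or not $\evec_1 = \vct{0}$ is an extreme point of $\newpoly{\emat}$. Throughout, I use that $f_{\mathsf{SAGE}} \leq f^\star$ automatically, that $\newpoly{\emat}$ simplicial means its extreme set is simplicial, and that distinctness of columns in $\emat$ forces every nonextremal $\evec_i$ with $i \neq 1$ to be nonzero (so the hypothesis on $\vct{c}$ really gives $c_i \leq 0$ for \emph{all} nonextremal $\evec_i$ with $i \neq 1$).

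Case (i): $\vct{0}$ is extremal. For $\gamma \leq c_1$, the vector $\vct{c} - \gamma\vct{e}_1$ has $c_1 - \gamma \geq 0$ on the extreme point $\vct{0}$ and unchanged (nonpositive) entries on the nonextremal exponents, so Theorem \ref{thm:linear_indep_and_all_nonext_neg_is_exact} applies and gives $\vct{c} - \gamma\vct{e}_1 \in \csage{\emat} \Leftrightarrow \vct{c} - \gamma\vct{e}_1 \in \cnns{\emat} \Leftrightarrow f^\star \geq \gamma$. For $\gamma > c_1$, Proposition \ref{prop:sage_def_wrt_exts} rules out membership in either cone. Hence $f_{\mathsf{SAGE}} = \min(c_1, f^\star)$. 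To conclude $f_{\mathsf{SAGE}} = f^\star$, I show $f^\star \leq c_1$: extremality of $\vct{0}$ yields a direction $\vct{d}$ with $\evec_i^\intercal \vct{d} < 0$ for every $i \geq 2$, so $f(t\vct{d}) \to c_1$ as $t \to \infty$, forcing $f^\star \leq c_1$.

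Case (ii): $\vct{0}$ is nonextremal. Now the sign pattern required by Theorem \ref{thm:linear_indep_and_all_nonext_neg_is_exact} demands $c_1 - \gamma \leq 0$, i.e.\ $\gamma \geq c_1$. For such $\gamma$, the theorem gives $\vct{c} - \gamma\vct{e}_1 \in \csage{\emat} \Leftrightarrow \gamma \leq f^\star$. If $f^\star \geq c_1$, every $\gamma \in [c_1, f^\star]$ is feasible, so $f_{\mathsf{SAGE}} \geq f^\star$, and combined with $f_{\mathsf{SAGE}} \leq f^\star$ we obtain $f_{\mathsf{SAGE}} = f^\star$. If instead $f^\star < c_1$, then every $\gamma \geq c_1$ is infeasible, hence $f_{\mathsf{SAGE}} < c_1$; combined with $f_{\mathsf{SAGE}} \leq f^\star < c_1$, this places $f^\star$ in $[f_{\mathsf{SAGE}}, c_1)$, so either $f_{\mathsf{SAGE}} = f^\star$ or $f^\star \in (f_{\mathsf{SAGE}}, c_1)$.

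The main obstacle is the range $\gamma < c_1$ in Case (ii), where the shift $-\gamma\vct{e}_1$ makes the coefficient of a nonextremal exponent positive, so Theorem \ref{thm:linear_indep_and_all_nonext_neg_is_exact} no longer applies and a residual gap between $f_{\mathsf{SAGE}}$ and $f^\star$ becomes possible. The point is that this gap is \emph{confined} to the interval below $c_1$: the theorem still pins down feasibility exactly for $\gamma \geq c_1$, and this alone is enough, via the trichotomy $f^\star \geq c_1$ versus $f^\star < c_1$, to force the conclusion. No deep new tool is needed beyond Theorem \ref{thm:linear_indep_and_all_nonext_neg_is_exact} and Proposition \ref{prop:sage_def_wrt_exts}; the delicacy is purely in bookkeeping the sign of $c_1 - \gamma$ against the extremality status of $\vct{0}$.
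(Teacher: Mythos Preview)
Your proof is correct and rests on the same key idea as the paper's: apply Theorem~\ref{thm:linear_indep_and_all_nonext_neg_is_exact} to the shifted vector $\vct{c}-\gamma\vct{e}_1$ and track when the sign-pattern hypothesis is met. The paper compresses this into a single contrapositive (``$f^\star \geq c_1 \Rightarrow f - f^\star$ satisfies the hypothesis of Theorem~\ref{thm:linear_indep_and_all_nonext_neg_is_exact}, hence is SAGE, hence $f_{\mathsf{SAGE}}=f^\star$''), whereas you expand into a full case analysis on the extremality of $\vct{0}$.

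One remark on Case~(i): you do more work than needed. When $\evec_1$ is extremal, Theorem~\ref{thm:linear_indep_and_all_nonext_neg_is_exact} imposes \emph{no} sign constraint on the first coordinate, so it applies for every $\gamma$, not only $\gamma \leq c_1$. This gives $f_{\mathsf{SAGE}}=f^\star$ immediately, and the limiting argument that $f^\star \leq c_1$ (while true) is not required. Your split at $\gamma=c_1$ via Proposition~\ref{prop:sage_def_wrt_exts} is consistent with this but redundant. Case~(ii) is where the real content lies, and there your argument matches the paper's exactly in substance.
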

\begin{proof}
	It suffices to show that $f_{\mathsf{SAGE}} < f^\star$ implies $f^\star < c_1$. This follows as the contrapositive of the following statement: ``If $f^\star \geq c_1$, then by Theorem \ref{thm:linear_indep_and_all_nonext_neg_is_exact} the nonnegative signomial $f - f^\star$ is SAGE, which in turn ensures $f_{\mathsf{SAGE}} = f^\star$.''
\end{proof}

Now we consider constrained signomial programs. 
Starting with problem data $(f,g)$ where  $f = \Sig(\emat,\vct{c})$, $g_j = \Sig(\emat,\vct{g}_j)$ for $j$ in $[k]$, \textcolor{revisionBlue}{and $\evec_1 = \vct{0}$}, consider the problem of computing 
\begin{equation}
    (f,g)^\star \doteq \inf\{ f(\vct{x}) ~:~ \vct{x} \text{ in } \mathbb{R}^n \text{ satisfies } g(\vct{x}) \geq \vct{0} \}.\label{eq:constr_opt_standard_form}
\end{equation}
It is evident\footnote{See Section 3.4 of \cite{CS16}.} that we can relax the problem to that of
\begin{align*}
    (f,g)_{\mathsf{SAGE}} \doteq& \inf\{~ \vct{c}^\intercal \vct{v} ~: ~ \vct{v} \text{ in } \csage{\emat}^\dagger \text{ satisfies } v_1 = 1 \text{ and } \mtx{G}^\intercal \vct{v} \geq \vct{0}  \} \leq (f,g)^\star 
\end{align*}
where $\mtx{G}$ is the $m \times k$ matrix whose columns are the $\vct{g}_j$.
\begin{corollary}\label{cor:constrained_case_exact}
	Suppose $\newpoly{\emat}$ is simplicial with vertex $\evec_1 =  \vct{0}$, and that when $\evec_i$ is nonextremal we have
	{\color{revisionBlue}$\mathrm{(i)}$ $\vct{c}^\intercal \vct{v}$ is decreasing in $v_i$, and $\mathrm{(ii)}$ each $\vct{g}_j^\intercal \vct{v}$ is increasing in $v_i$.} Then  $(f,g)_{\mathsf{SAGE}} = (f,g)^\star$.
\end{corollary}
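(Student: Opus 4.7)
The plan is to adapt the dual-side argument from the proof of Theorem \ref{thm:linear_indep_and_all_nonext_neg_is_exact} and verify that the added constraints $\mtx{G}^\intercal \vct{v} \geq \vct{0}$ do not obstruct its chain of reductions. Let $N$ denote the set of nonextremal indices, so by hypothesis $c_i \leq 0$ and $g_{ji} \geq 0$ for $i \in N$, and let $E = [m] \setminus N$. The direction $(f,g)_{\mathsf{SAGE}} \leq (f,g)^\star$ is automatic, since every $\vct{x}$ with $g(\vct{x}) \geq \vct{0}$ yields $\vct{v} = (\exp \evec_i^\intercal \vct{x})_{i=1}^m$ that is feasible for the dual SAGE problem with $\vct{c}^\intercal \vct{v} = f(\vct{x})$. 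It suffices, then, to exhibit a $\vct{\mu}_1^\star \in \R^n$ with $g(\vct{\mu}_1^\star) \geq \vct{0}$ and $f(\vct{\mu}_1^\star) \leq (f,g)_{\mathsf{SAGE}}$.

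Invoking Corollary \ref{cor:sagerelax_restrictss_primal} together with Proposition \ref{prop:strong_duality}, just as in the proof of Theorem \ref{thm:linear_indep_and_all_nonext_neg_is_exact}, I would write $(f,g)_{\mathsf{SAGE}}$ as the infimum of $\vct{c}^\intercal \vct{v}$ over $\vct{v} \in \R^m_{++}$ with $v_1 = 1$, $\mtx{G}^\intercal \vct{v} \geq \vct{0}$, and auxiliary vectors $\{\vct{\mu}_i\}_{i \in N \cup \{1\}}$ subject to $v_i \ln(v_i / v_j) \leq (\evec_i - \evec_j)^\intercal \vct{\mu}_i$ for $j \in E$ and $i \in N \cup \{1\}$. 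Since $\evec_1 = \vct{0}$ is extremal in the simplicial $\newpoly{\emat}$, the remaining extreme points $\{\evec_j\}_{j \in E \setminus \{1\}}$ are linearly independent, so $\vct{\mu}_1$ can be chosen to bind the $i = 1$ constraints, yielding $\ln v_j = \evec_j^\intercal \vct{\mu}_1$ for all $j \in E$. Substituting this equality into the $i \in N$ constraints and absorbing $v_i$ into $\vct{\mu}_i$, the same concavity/subgradient argument from the proof of Theorem \ref{thm:linear_indep_and_all_nonext_neg_is_exact} (using $\vct{0} \in \conv\{\evec_i - \evec_j : j \in E\}$ to conclude optimality of $\vct{\mu}_i = \vct{\mu}_1$) collapses those constraints to $\ln v_i \leq \evec_i^\intercal \vct{\mu}_1$.

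The new ingredient for the constrained case is that the bound $v_i \leq \exp(\evec_i^\intercal \vct{\mu}_1)$ must be tight at any optimum: assumption (i) makes the objective nonincreasing in $v_i$ for $i \in N$, while assumption (ii) makes every $\vct{g}_j^\intercal \vct{v}$ nondecreasing in $v_i$, so raising $v_i$ to its upper bound simultaneously lowers the objective and preserves $\mtx{G}^\intercal \vct{v} \geq \vct{0}$. After this saturation, $v_l = \exp(\evec_l^\intercal \vct{\mu}_1)$ for every $l \in [m]$, so the objective is $f(\vct{\mu}_1)$, the inequality constraints become $g(\vct{\mu}_1) \geq \vct{0}$, and the problem reduces to $\inf\{f(\vct{\mu}_1) : \vct{\mu}_1 \in \R^n,\ g(\vct{\mu}_1) \geq \vct{0}\} = (f,g)^\star$, as required. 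The main obstacle will be verifying that this monotonicity step respects the joint optimization over $\vct{v}$ and $\vct{\mu}_1$ --- that pushing up $v_i$ is simultaneously compatible with the admissible variation of $\vct{\mu}_1$ and the other AGE dual constraints --- and it is precisely this compatibility which the sign conditions (i) and (ii) are engineered to deliver.
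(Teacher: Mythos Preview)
Your overall strategy is exactly what the paper's sketch intends: run the change-of-variables argument from Theorem~\ref{thm:linear_indep_and_all_nonext_neg_is_exact} on the dual and use the sign assumptions (i) and (ii) to saturate the $v_i$'s for nonextremal $i$. The monotonicity paragraph is the right new ingredient and is argued correctly.

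There is, however, a genuine gap at the step where you invoke Corollary~\ref{cor:sagerelax_restrictss_primal} and Proposition~\ref{prop:strong_duality} to ``write $(f,g)_{\mathsf{SAGE}}$ as the infimum'' over the reduced constraint set. Both of those results are stated for the \emph{unconstrained} quantity $f_{\mathsf{SAGE}}$; they establish equality of optimal values for one specific objective $\vct{c}$ under $v_1=1$, not that $\csage{\emat}^\dagger$ and $\bigcap_{i\in N\cup\{1\}}\cage{\emat,i,N}^\dagger$ coincide as sets. Once you intersect with the extra polyhedron $\{\vct{v}:\mtx{G}^\intercal\vct{v}\geq\vct{0}\}$, nothing in those results lets you transfer the equality. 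Moreover, $(f,g)_{\mathsf{SAGE}}$ is \emph{defined} directly as an infimum over $\csage{\emat}^\dagger$, so there is no primal--dual pair to which Proposition~\ref{prop:strong_duality} applies.

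The paper's sketch resolves this precisely by avoiding that invocation: simply drop the extra dual AGE constraints (those indexed by $i\notin N\cup\{1\}$) to obtain a relaxation $(f,g)'\leq(f,g)_{\mathsf{SAGE}}$, and then your remaining argument, which is sound, shows $(f,g)'=(f,g)^\star$. Combined with $(f,g)_{\mathsf{SAGE}}\leq(f,g)^\star$ you get the sandwich $(f,g)^\star=(f,g)'\leq(f,g)_{\mathsf{SAGE}}\leq(f,g)^\star$. So your proof is correct once you replace the appeal to Corollary~\ref{cor:sagerelax_restrictss_primal} with this relax-and-sandwich step.
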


\begin{proof}[Proof sketch]
	The claim that $\constrsagedual{f}{g} = (f,g)^\star$ can be established by a change-of-variables and change-of-constraints argument of the same kind used in the proof of Theorem \ref{thm:linear_indep_and_all_nonext_neg_is_exact}. 
	
	Suffice it to say that rather than using {\color{revisionBlue}Corollary \ref{cor:sagerelax_restrictss_primal}} to justify removing constraints from the dual without loss of generality, one can simply throw out those constraints to obtain some $(f,g)'$ with $(f,g)' \leq (f,g)_{\mathsf{SAGE}}$. One then shows $(f,g)' = (f,g)^\star$ to sandwich $(f,g)^\star \leq (f,g)' \leq \constrsagedual{f}{g} \leq (f,g)^\star$.
\end{proof}

\subsection{Finite error in SAGE relaxations}\label{subsec:finiteError}

This section's final theorem directly considers SAGE as a relaxation scheme for signomial minimization.
It exploits the primal formulation for $f_{\mathsf{SAGE}}$ to establish sufficient conditions under which SAGE relaxations can only exhibit finite error.
\begin{theorem}
	Suppose {\color{revisionBlue}$\vct{0} \in \newpoly{\emat}$ and} there exists an $\epsilon > 0$ so that $(1+\epsilon)\evec_j $ belongs to $ \newpoly{\emat}$ for all nonextremal $\evec_j$. Then $f = \Sig(\emat,\vct{c})$ is bounded below iff $f_{\mathsf{SAGE}}$ is finite.\label{thm:fsageisbounded1pluseps}
\end{theorem}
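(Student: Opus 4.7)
The forward direction is immediate: if $f_{\mathsf{SAGE}}$ is finite, there exists $\gamma\in\R$ with $f-\gamma\in\csage{\emat}\subseteq\cnns{\emat}$, so $f\geq\gamma$ is bounded below.

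For the converse, I plan an explicit construction: given $f$ bounded below, exhibit $M > 0$ with $f + M\in\csage{\emat}$. Combined with the a priori estimate $f_{\mathsf{SAGE}}\leq f(\vct{0})<\infty$, this places $f_{\mathsf{SAGE}}\in\R$. First, bounded below forces $c_i\geq 0$ at every extremal $\evec_i$: otherwise a linear functional $\vct d$ uniquely exposing $\evec_i$ among the $\{\evec_k\}_{k=1}^m$ would yield $f(t\vct d)\to-\infty$. Consequently $N:=\{j:c_j<0\}$ indexes only nonextremal exponents.

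For each $j\in N$, I use the hypothesis $(1+\epsilon)\evec_j\in\newpoly{\emat}$ together with $\evec_1=\vct 0$ (per Section \ref{sec:SAGE}) to build a favorable convex representation of $\evec_j$. Writing $(1+\epsilon)\evec_j=\sum_{i\in E}\alpha^{(j)}_i\evec_i$ as a Carath\'eodory expansion over the extremal indices $E\subset[m]\setminus N$ (possible since $\newpoly{\emat}=\conv\{\evec_i : i\in E\}$) and blending with $\vct 0$ yields
\[
\evec_j \;=\; \tfrac{1}{1+\epsilon}\sum_{i\in E}\alpha^{(j)}_i\evec_i \;+\; \tfrac{\epsilon}{1+\epsilon}\vct 0 \;=\; \sum_{i\in E\cup\{1\}}\lambda^{(j)}_i\evec_i,
\]
a convex combination with $\lambda^{(j)}_1\geq \epsilon/(1+\epsilon)>0$ and support in $[m]\setminus N$. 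Using $\vct{\lambda}^{(j)}$ as AM/GM weights, I construct the AGE summand $h^{(j)}(\vct x)=c_j\exp(\evec_j^\intercal\vct x)+\sum_i b^{(j)}_i\exp(\evec_i^\intercal\vct x)$ by assigning $b^{(j)}_i=c_i/|N|$ for $i\in E\setminus\{1\}$ (even shares of each extremal budget) and solving $\prod_i (b^{(j)}_i/\lambda^{(j)}_i)^{\lambda^{(j)}_i}\geq|c_j|$ for $b^{(j)}_1$. The resulting value is finite because $\lambda^{(j)}_1$ is uniformly bounded below by $\epsilon/(1+\epsilon)$, giving $b^{(j)}_1$ of order $\bigl(|c_j|/\text{const}\bigr)^{1/\lambda^{(j)}_1}$. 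Taking $M\geq\sum_{j\in N}b^{(j)}_1-c_1$ renders the residue $f+M-\sum_{j\in N}h^{(j)}$ coefficientwise nonnegative, so it (and hence $f+M$) belongs to $\csage{\emat}$.

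The main obstacle is an extremal $\evec_i$ with $c_i=0$: the budget share $b^{(j)}_i=0$ would collapse the AM/GM product whenever $\lambda^{(j)}_i>0$. Addressing this requires choosing $\vct\alpha^{(j)}$ to avoid such zero-coefficient extremals by routing through positive-coefficient extremal and positive-coefficient nonextremal terms. Bounded below, via a separating-hyperplane argument applied to a direction exposing a putative such extremal, rules out $\evec_j$ lying on the radial boundary of $\conv(\{\evec_i: c_i>0\}\cup\{\vct 0\})$, so the reroute is feasible, possibly after replacing $\epsilon$ by a strictly smaller $\epsilon'>0$ depending on $\vct c$; the same estimates then produce a finite $M$ and complete the argument.
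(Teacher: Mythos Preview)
Your overall construction coincides with the paper's: split the positive-coefficient budget evenly among the $|N|$ AGE pieces, use the $(1+\epsilon)$ hypothesis to write each $\evec_j$ as $\tfrac{\epsilon}{1+\epsilon}\cdot\vct{0} + \tfrac{1}{1+\epsilon}\sum_{i\in E}\alpha^{(j)}_i\evec_i$ so that the weight on $\vct{0}$ is bounded below by $\epsilon/(1+\epsilon)$, and then push the constant $M$ (the paper's $\gamma$) large enough to force each AM/GM---equivalently relative entropy---inequality.

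Where you diverge is the final paragraph, and here you are actually more careful than the paper. You correctly flag an obstacle that the paper's own proof glosses over: if some extremal $\evec_i$ has $c_i = 0$ but receives positive weight $\alpha^{(j)}_i$, then $b^{(j)}_i = c_i/|N| = 0$ collapses the AM/GM product (equivalently forces $\relent{\vct{\nu}^{(j)}}{e\vct{\hat{c}}^{(j)}_{\setminus j}} = +\infty$), and no finite $M$ helps. Your proposed cure---reroute the convex combination through positive-coefficient, possibly nonextremal, exponents---is the right idea and does work in concrete instances. For example, with $\emat = [\,0,\,4,\,1,\,3\,]$ and $\vct{c} = (0,0,-1,1)^\intercal$ one has $f(x) = e^{3x}-e^x$ bounded below and the theorem's hypothesis holds with $\epsilon = 1/3$; yet the only expansion of $\evec_3 = 1$ over extremals is $1 = \tfrac{3}{4}\cdot 0 + \tfrac{1}{4}\cdot 4$, which is forced through $\evec_2 = 4$ with $c_2 = 0$, whereas $1 = \tfrac{2}{3}\cdot 0 + \tfrac{1}{3}\cdot 3$ via the nonextremal $\evec_4$ succeeds.

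However, your justification for the reroute is not yet a proof. You assert that boundedness below ``rules out $\evec_j$ lying on the radial boundary of $\conv(\{\evec_i : c_i > 0\}\cup\{\vct 0\})$''---equivalently, that $t\evec_j\in\conv\{\evec_i:c_i>0\}$ for some $t>1$---and gesture at a separating hyperplane. But this is precisely the nontrivial step: a hyperplane exposing a zero-coefficient extremal does not by itself produce a direction along which $f\to-\infty$, since positive-coefficient exponents may still dominate along that direction. Closing this requires a genuine face/recession argument (in the spirit of Lemma~\ref{lem:ifgisface}) rather than a one-line appeal to separation.
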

The requirements Theorem \ref{thm:fsageisbounded1pluseps} imposes on the Newton polytope are significantly weaker than those found elsewhere in this work.
{\color{revisionBlue}Theorem \ref{thm:fsageisbounded1pluseps} is especially notable as} we do not know of analogous theorems in the literature on SOS relaxations for polynomial optimization.

\begin{proof}[Proof of Theorem \ref{thm:fsageisbounded1pluseps}]
	Let $f = \Sig(\emat,\vct{c})$ have $\evec_1 = \vct{0}$ and $f^\star > -\infty$.
	We may assume without loss of generality that $c_1 = 0$.
	Use $E = \{ i : \evec_i \text{ nonzero, extremal}\}$ to denote indices of extremal exponents of $f$, excluding the possibly-extremal exponent $\evec_1 = \vct{0}$.
	The desired claim holds if there exists a positive constant $\gamma$ so that the translate $f_{\gamma} = f + \gamma$ is SAGE. 
	
	Define $\vct{\hat{c}} = \vct{c} + \gamma \vct{e}_1$ as the coefficient vector of $f_{\gamma}$.
	Because $f^\star > - \infty$ we have $c_i = \hat{c}_i \geq 0$ for every $ i $ in $ E$ (Proposition \ref{prop:sage_def_wrt_exts}).
	Let $N$ denote the set of indices $i$ for which $\hat{c}_i < 0$. 
	For each such index $i {\color{revisionBlue}\in N}$ we define the vector $\vct{\hat{c}}^{(i)} $ in $\mathbb{R}^m$ by
	\begin{equation*}
	\hat{c}^{(i)}_j = 
	\begin{cases}  
	\hat{c}_i & \text{ if } j = i \\
	\hat{c}_j / |N| &\text{ if } j \in {\color{revisionBlue} [m]\setminus  N} \\
	0 & \text{ if } j \in {\color{revisionBlue} N\setminus \{i\}}
	\end{cases}.
	\end{equation*}
	Certainly, $\sum_{i \in N} \vct{\hat{c}}^{(i)} = \vct{\hat{c}}$ and $\vct{\hat{c}}^{(i)}_{\setminus i} $ is nonnegative (in particular $\hat{c}^{(i)}_1 = \gamma / |N|$ is positive).
	
	Now we build the vectors $\vct{\nu}^{(i)}$ for $i$ in $N$.	
	Because $N$ is contained within $[m] \setminus E$, we have that each $i \text{ in } N$ satisfies $(1+\epsilon)\evec_i $ in $\newpoly{\emat}$ for some positive $\epsilon$.
	Therefore for $i $ in $N$, the vector $\evec_i$ is expressible as a convex combination of extremal exponents and the zero vector.
	Let $(\lambda^{(i)}_j)_{j \in E \cup \{1\}}$ be positive convex combination coefficients so that $\evec_i = \sum_{j \in E \cup \{1\}} \lambda^{(i)}_j \evec_j$.
	
	Now define the vector {\color{revisionBlue}$\vct{\nu}^{(i)} $ in $ \mathbb{R}^{[m]\setminus i}$ by $\nu^{(i)}_j = \lambda^{(i)}_j$ for $j $ in $ E \cup \{1\}$, and $\nu^{(i)}_j = 0$ for all remaining indices.}
	Each $\vct{\nu}^{(i)}$ {\color{revisionBlue}is nonnegative, satisfies $\nu^{(i)}_1 > 0$, and belongs to the kernel of $[\emat_{\setminus i}-\evec_i\vct{1}^\intercal]$.}
	Because $\nu^{(i)}_1$ is positive, the quantity {\color{revisionBlue}$D(\vct{\nu}^{(i)}, \vct{\hat{c}}^{(i)}_{\setminus i}) $} can be made to diverge to $-\infty$ by sending $\gamma$ to $\infty$. It follows that there exists a sufficiently large $M$ so that $\gamma \geq M$ implies
	{\color{revisionBlue}
	\begin{equation}
	\relent{\vct{\nu}^{(i)}}{e \vct{\hat{c}}^{(i)}_{\setminus i}} - \hat{c}_i \leq 0 \quad \text{ for all } i \text{ in } N. \label{eq:1plusepsboundedness2}
	\end{equation}}
	{\color{revisionBlue}Hence for} sufficiently large $\gamma$, we have $\vct{\hat{c}}^{(i)}$ in $\cage{\emat,i}$ for all $i $ in $ N$-- and the result follows.  
\end{proof}

\section{Certifying Global Nonnegativity of Polynomials}\label{sec:PolyNNG}

{\color{revisionBlue}Throughout this section we write $p = \Poly(\emat,\vct{c})$ to mean that $p$ takes values $p(\vct{x}) = \sum_{i=1}^m c_i \vct{x}^{\evec_i}$.
We refer to polynomials in this way to reflect our interest in \textit{sparse polynomials}.
Vectors $\evec_i$ are sometimes called \textit{terms}, where a term is \textit{even} if $\evec_i$ belongs to $(2\N)^n$.}
To an $n$-by-$m$ matrix of nonnegative integers $\emat$, we associate the sparse nonnegativity cone
\[
    \cnnp{\emat} \doteq \{ \vct{c} : \Poly(\emat, \vct{c})(\vct{x}) \geq 0 \text{ for all } \vct{x} \text{ in } \mathbb{R}^n  \}.
\]
{\color{revisionBlue}Beginning with Section \ref{subsec:sagepoly} we introduce \textit{polynomial SAGE certificates}.
We shall see that polynomial SAGE certificates offer a tractable avenue for optimizing over a subset of $\cnnp{\emat}$, where the complexity depends on $\emat$ exclusively  through the dimensions $n$ and $m$.}

{\color{revisionBlue}Section \ref{subsec:polyCorollaries} demonstrates how our study of SAGE signomials yields several corollaries in this new polynomial setting.
Perhaps most prominently, Section \ref{subsec:polyCorollaries} implies that a polynomial admits a SAGE certificate if and only if it admits a SONC certificate.
The qualitative relationship between SAGE and SONC as proof systems is explained in Section \ref{subsec:amgm}, and Section \ref{subsec:sonc} addresses how some of our corollaries compare to earlier results in the SONC literature.}

{\color{revisionBlue}In Section \ref{subsec:sos} we compare polynomial SAGE certificates to the widely-studied Sums-of-Squares certificates.
We conclude with Section \ref{subsec:polyhier}, which outlines how to use SAGE polynomials to obtain a hierarchy for constrained polynomial optimization.}

\subsection{Signomial representatives and polynomial SAGE certificates}\label{subsec:sagepoly}
\textcolor{revisionBlue}{
To a polynomial $p = \Poly(\emat,\vct{c})$ we associate the \textit{signomial representative} $q = \Sig(\emat, \vct{\hat{c}})$
with
\begin{equation}
    \hat{c}_i = \begin{cases}
    ~~~c_i &\text{ if } \evec_i \text{ is even } \\
    -|c_i| &\text{ otherwise }
    \end{cases}. \label{eq:sig_rep_choose_c}
\end{equation}}
\hspace{-0.8ex}{\color{revisionBlue}By a termwise argument, we have that if the signomial $q$ is nonnegative on $\R^n$, then the polynomial $p$ must also be nonnegative on $\R^n$.
Moving from a polynomial to its signomial representative often entails some loss of generality.
For example, the univariate polynomial $p(x) = 1 + x - x^3 + x^4$ never has both ``$+x < 0$'' and ``$-x^3 < 0$,'' and yet the inner terms appearing in the signomial representative $q(y) = 1 - \exp(y) - \exp(3y) + \exp(4y)$ are both negative.

There is a natural condition $\emat$ and the sign pattern of $\vct{c}$ where passing to the signomial representative is at no loss of generality.
Specifically,
if there exists a point $\vct{x}_0 \in (\R\setminus \{0\})^n$ where $c_i \vct{x}_0^{\evec_i} \leq 0$ for all $\evec_i \not\in (2\N)^n$,
then $\Poly(\emat,\vct{c})$ is nonnegative if and only if its signomial representative is nonnegative.
We call such polynomials \textit{orthant-dominated}.
Checking if a polynomial is orthant-dominated is a simple task. 
Given $\emat$ and $\vct{c}$, define $\vct{b}$ by $b_i = 0$ if $c_i \leq 0$ or $\evec_i$ is even, and $b_i = 1$ if otherwise.
Then assuming every $c_i \neq 0$, the polynomial $\Poly(\emat,\vct{c})$ is orthant-dominated if and only if the system $\emat^\intercal \vct{s} = \vct{b} \pmod{2}$ has a solution over $\vct{s} \in \mathbb{F}^n_2$.

In what should feel natural, we call $p = \Poly(\emat,\vct{c})$ a \textit{SAGE polynomial} if its signomial representative $q = \Sig(\emat,\vct{\hat{c}})$ is a SAGE signomial.
Subsequently, we define a \textit{polynomial SAGE certificate} for $p= \Poly(\emat,\vct{c})$ as a set of signomial AGE certificates $\{(\vct{\hat{c}}^{(i)},\vct{\nu}^{(i)})\}_{i=1}^m$ where $\vct{\hat{c}} \doteq \sum_{i=1}^m \vct{\hat{c}}^{(i)}$ defines the signomial representative for $p$.
Because the signomial SAGE cone contains the nonnegative orthant, the cone of coefficients for SAGE polynomials admits the representation
\begin{align}
    \cpolysage{\emat} = \{ \vct{c} ~:& \text{ there exists }\vct{\hat{c}} \text{ in } \csage{\emat} \text{ where } \vct{\hat{c}} \leq \vct{c} \nonumber \\
    &~	 \text{and } \hat{c}_i \leq -c_i \text{ for all } i \text{ with } \evec_i \text{ not in } (2\mathbb{N})^n \}.\label{eq:c_poly_sage_def_3_efficient}
\end{align}
We use this representation to obtain the following theorem.

\begin{theorem}\label{thm:poly_sage_tractable}
    Let $L : \R^\ell \to \R^m$ be an injective affine map, $\emat \in \N^{n \times m}$ be a matrix of exponents ($n \leq m$), and $\vct{h}$ be a vector in $\R^\ell$. An $\epsilon$-approximate solution to
    \begin{equation}\label{eq:poly_runtime_formulation}
        \inf_{\vct{z} \in \R^\ell}\{ \vct{h}^\intercal \vct{z} \,:\, L(\vct{z}) \in \cpolysage{\emat} \}.
    \end{equation}
    can be computed in time $O(p(m)\log(1/\epsilon))$ for a polynomial $p$.
\end{theorem}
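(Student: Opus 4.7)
The plan is to express problem \eqref{eq:poly_runtime_formulation} as a standard convex program of size polynomial in $m$, whose constraints involve only affine relations and the relative entropy function, and then invoke the known complexity of interior-point methods for relative entropy programming.

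First I would use the characterization \eqref{eq:c_poly_sage_def_3_efficient} to lift the problem to one over $(\vct{z}, \vct{\hat{c}})$. Write $S \subset [m]$ for the indices with $\evec_i \not\in (2\N)^n$ (which can be computed from $\emat$ in time $O(nm)$). Then $L(\vct{z}) \in \cpolysage{\emat}$ if and only if there exists $\vct{\hat{c}} \in \csage{\emat}$ satisfying the affine inequalities $\vct{\hat{c}} \leq L(\vct{z})$ together with $\hat{c}_i \leq -L(\vct{z})_i$ for $i \in S$. Since $L$ is affine in $\vct{z}$, these are linear constraints in the joint variable $(\vct{z}, \vct{\hat{c}}) \in \R^{\ell + m}$.

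Next I would replace the constraint $\vct{\hat{c}} \in \csage{\emat}$ by its explicit relative entropy representation. Using the definition \eqref{eq:CSAGE} of the SAGE cone as a Minkowski sum of AGE cones, together with the primal description \eqref{eq:ageRelEnt} of each AGE cone, membership $\vct{\hat{c}} \in \csage{\emat}$ is equivalent to the existence of vectors $\{\vct{c}^{(k)}\}_{k=1}^m \subset \R^m$ and $\{\vct{\nu}^{(k)}\}_{k=1}^m$ with $\vct{\nu}^{(k)} \in \R^{[m]\setminus k}_+$ satisfying: (i) $\sum_{k=1}^m \vct{c}^{(k)} = \vct{\hat{c}}$, (ii) $c^{(k)}_i \geq 0$ for all $i \neq k$, (iii) the linear equation $[\emat_{\setminus k} - \evec_k \vct{1}^\intercal]\vct{\nu}^{(k)} = \vct{0}$, and (iv) the relative entropy inequality $\relent{\vct{\nu}^{(k)}}{e \vct{c}^{(k)}_{\setminus k}} \leq c^{(k)}_k$. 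This yields a convex program in $O(m^2)$ scalar variables (actually one can reduce further using Corollary \ref{cor:sagerelax_restrictss_primal}, but $O(m^2)$ suffices for the claimed polynomial bound) with $O(m^2)$ affine constraints and $O(m)$ scalar relative entropy inequalities.

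Finally, I would invoke the fact that the relative entropy function admits an efficiently computable self-concordant barrier (of barrier parameter $O(m)$ per AGE cone), so interior-point methods solve the resulting convex program to accuracy $\epsilon$ in time $O(p(m) \log(1/\epsilon))$ for some polynomial $p$. Combined with the $O(nm) \leq O(m^2)$ preprocessing to construct the program from $(L, \emat, \vct{h})$, this gives the stated runtime.

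The main obstacle is not really conceptual but is just to cite (or construct) an appropriate self-concordant barrier for relative entropy constraints; one clean route is to observe that each inequality $\nu \ln(\nu/\lambda) \leq t$ can be modeled using the exponential cone $K_{\exp} = \cl\{(x,y,z) : y>0,\, y e^{x/y} \leq z\}$, which is known to admit a 3-self-concordant barrier (Nesterov--Nemirovski). Since we use $O(m^2)$ such constraints, the total barrier parameter scales polynomially in $m$, and the standard complexity analysis of path-following interior-point methods delivers the claimed $O(p(m)\log(1/\epsilon))$ bound.
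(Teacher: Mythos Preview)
Your proposal is correct and follows essentially the same approach as the paper: lift via the characterization \eqref{eq:c_poly_sage_def_3_efficient}, expand $\csage{\emat}$ as a Minkowski sum of AGE cones using the relative-entropy description \eqref{eq:ageRelEnt}, count $O(m^2)$ variables and constraints, and reduce to exponential-cone programming with a self-concordant barrier of polynomial parameter. The only cosmetic difference is that the paper parametrizes the linear constraint $[\emat_{\setminus k} - \evec_k\vct{1}^\intercal]\vct{\nu}^{(k)} = \vct{0}$ by precomputing a basis $\mtx{M}_k$ for the kernel, whereas you keep it as an explicit linear equality; also note your phrase ``$O(m)$ scalar relative entropy inequalities'' should read $O(m)$ \emph{vector} relative-entropy inequalities (each a sum of $m-1$ scalar terms), which is what gives the $O(m^2)$ exponential-cone count you correctly state at the end.
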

\begin{proof}
    We appeal to standard results on interior point methods (IPMs) for conic programming.
    The task is to show that $\cpolysage{\emat}$ can be expressed as a projection of a convex cone ``$K$,'' which possesses a tractable self-concordant barrier with a complexity parameter $\vartheta$ bounded by a polynomial in $m$.
    From there, the meaning of ``$\epsilon$-approximate'' and its relationship to the polynomial ``$p$'' depends highly on the details of a given IPM;
    relevant sources for general conic IPMs include \cite[\S 4]{nesterov1994} and \cite[\S 5]{Tunel2001}.
    In particular we rely on algorithms for optimizing over the \textit{exponential cone} $K_{\exp} = \cl\{ (u,v,w) \,:\, v \exp(u/v) \leq w, v > 0 \}$, and defer to \cite{Skajaa2014,expConeThesis,Papp2017} for formal meanings of ``$\epsilon$-approximate'' in our context.
    
    For each $i \in [m]$, let $\mtx{M}_i$ denote a matrix with ``$m_i$'' columns spanning $\ker(\emat_{\setminus i} - \evec_i\vct{1}^\intercal) \subset \R^{[m] \setminus i}$ and define the cone $K_{i} = \{  (\vct{u},\vct{v},t) \,:\, \vct{u},\vct{v} \in \R^{[m]\setminus i}_+,\, \relent{\vct{u}}{e \vct{v}} \leq t \}$.
    In terms of $\mtx{M}_i$ and $K_i$ we can reformulate the $i^{\text{th}}$ signomial AGE cone as
    \[
        \left\{ \vct{c}^{(i)} \,:\, \text{some } \vct{w}^{(i)} \in \R^{m_i} \text{ satisfies }
        \left(\mtx{M}_i\vct{w}^{(i)},~ \hat{\vct{c}}^{(i)}_{\setminus i},~ \hat{c}^{(i)}_i \right) \in K_i \right\}.
    \]
    Since $K_i$ can be represented with $m-1$ copies of $K_{\exp}$ and one linear inequality over $m-1$ additional scalar variables, the preceding display tells us that $\csage{\emat}$ can be represented with $m(m-1)$ copies of $K_{\exp}$, $m$ linear inequalities, and $O(m^2)$ scalar auxiliary variables.
    Combine this with the representation \eqref{eq:c_poly_sage_def_3_efficient} to find that the feasible set for \eqref{eq:poly_runtime_formulation} can be described with $O(m^2)$ exponential cone constraints, $O(m)$ linear inequalities, and $O(\ell + m^2) \in O(m^2)$ scalar variables.
    As the exponential cone has a tractable self-concordant barrier with complexity parameter $\vartheta_{\exp} = 3$, $\cpolysage{\emat}$ has a tractable self-concordant barrier with complexity parameter $O(m^2)$.
\end{proof}
} 

\subsection{Simple consequences of our signomial results}\label{subsec:polyCorollaries}

{\color{revisionBlue}Section \ref{subsec:sagepoly} suggested that the signomial SAGE cone is more fundamental than the polynomial SAGE cone.
This section serves to emphasize that idea, by showing how our study of the signomial SAGE cone quickly produces results in the polynomial setting.
The following corollaries are obtained by viewing Theorems \ref{thm:linear_indep_and_all_nonext_neg_is_exact} and \ref{thm:SageEqNng} through the lens of orthant-dominance.}
\begin{corollary}
    If exponent vectors $\emat$ induce a simplicial Newton polytope $\newpoly{\emat}$, and nonextremal exponents are linearly independent mod 2,
	then \textcolor{revisionBlue}{$\cpolysage{\emat} = \cnnp{\emat}$}.\label{corr:lin_indep_sage_poly}
\end{corollary}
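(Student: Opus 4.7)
The direction $\cpolysage{\emat} \subseteq \cnnp{\emat}$ is automatic from the definition of SAGE polynomials, so I would focus on proving $\cnnp{\emat} \subseteq \cpolysage{\emat}$. Given $\vct{c} \in \cnnp{\emat}$, my plan is to establish orthant-dominance of $p = \Poly(\emat,\vct{c})$, use this to reduce nonnegativity of $p$ to nonnegativity of the signomial representative $q = \Sig(\emat,\hat{\vct{c}})$, and then invoke Theorem \ref{thm:linear_indep_and_all_nonext_neg_is_exact}. After passing to the support of $\vct{c}$ I will assume every $c_i$ is nonzero, so that the $\mathbb{F}_2$-condition from the excerpt may be applied directly.

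The first key observation is that the hypothesis forces no nonextremal $\evec_i$ to be even, since an even $\evec_i$ reduces to the zero vector mod $2$ and so cannot belong to a linearly independent set over $\mathbb{F}_2^n$. Combined with the standard Newton-polytope fact that a nonnegative polynomial's extremal terms must have even exponent (otherwise a leading-order direction can be signed to force $p(\vct{x}) < 0$), this means the parity vector $\vct{b} \in \mathbb{F}_2^m$ appearing in the orthant-dominance test has $b_i = 0$ on all extremal indices, so the corresponding equations $\evec_i^\intercal \vct{s} \equiv 0 \pmod 2$ are automatic.

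The system $\emat^\intercal \vct{s} \equiv \vct{b} \pmod 2$ thus collapses to its nonextremal rows, which by hypothesis form a linearly independent set in $\mathbb{F}_2^n$; a solution $\vct{s} \in \mathbb{F}_2^n$ therefore exists. Taking $\vct{x}_0 = ((-1)^{s_j})_{j=1}^n$ witnesses orthant-dominance, and the substitution $\vct{x} = \vct{x}_0 \odot e^{\vct{y}}$ gives $p(\vct{x}_0 \odot e^{\vct{y}}) = \sum_i c_i \vct{x}_0^{\evec_i} e^{\evec_i^\intercal \vct{y}}$. A quick termwise check confirms $c_i \vct{x}_0^{\evec_i} = \hat{c}_i$ in both the even case (where $\vct{x}_0^{\evec_i} = 1$) and the non-even case (where the sign of $\vct{x}_0^{\evec_i}$ is opposite to that of $c_i$), so the sum equals $q(\vct{y})$. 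Nonnegativity of $p$ on $\R^n$ therefore descends to nonnegativity of $q$ on $\R^n$, yielding $\hat{\vct{c}} \in \cnns{\emat}$.

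Theorem \ref{thm:linear_indep_and_all_nonext_neg_is_exact} should then close the argument: its two hypotheses are that $\ext\newpoly{\emat}$ is simplicial (given) and that $\hat{c}_i \leq 0$ on nonextremal indices. The latter holds because each nonextremal $\evec_i$ is non-even, so by the defining formula \eqref{eq:sig_rep_choose_c} we have $\hat{c}_i = -|c_i| \leq 0$. The theorem then yields $\hat{\vct{c}} \in \csage{\emat}$, which by the definition of polynomial SAGE gives $\vct{c} \in \cpolysage{\emat}$. The main subtlety I anticipate is the double role played by the ``linearly independent mod 2'' hypothesis: it is precisely the condition that both secures a solution to the $\mathbb{F}_2$-system for orthant-dominance and delivers the nonextremal sign pattern required by Theorem \ref{thm:linear_indep_and_all_nonext_neg_is_exact}.
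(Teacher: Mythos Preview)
Your proposal is correct and follows exactly the route the paper intends: the paper states only that this corollary is ``obtained by viewing Theorem \ref{thm:linear_indep_and_all_nonext_neg_is_exact} through the lens of orthant-dominance,'' and you have filled in precisely those details---showing the mod-2 hypothesis simultaneously (i) forces every nonextremal exponent to be non-even, so $\hat c_i\le 0$ there, and (ii) guarantees solvability of the $\mathbb F_2$ system witnessing orthant-dominance, after which Theorem \ref{thm:linear_indep_and_all_nonext_neg_is_exact} applies.

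One small point to tighten: after restricting to the support $\emat'$, be explicit that you are still applying Theorem \ref{thm:linear_indep_and_all_nonext_neg_is_exact} to the \emph{full} matrix $\emat$ (as your wording ``$\ext\newpoly{\emat}$ is simplicial (given)'' suggests), since $\newpoly{\emat'}$ need not a priori be simplicial. This is harmless because zero coefficients extend back to $\hat{\vct c}\in\cnns{\emat}$ and the sign condition $\hat c_i\le 0$ on nonextremal indices of $\emat$ holds regardless of whether $c_i=0$. Alternatively one can note that any vertex of $\newpoly{\emat'}$ must be even (nonnegativity) yet nonextremal columns of $\emat$ are non-even, so vertices of $\newpoly{\emat'}$ lie among those of $\newpoly{\emat}$ and simpliciality is inherited.
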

\begin{corollary}
	Suppose $\emat$ belonging to $p = \Poly(\emat,\vct{c})$ can be partitioned into faces where (1) each simplicial face induces an orthant-dominated polynomial with at most two nonextremal {\color{revisionBlue}terms}, and (2) all other faces have at most one nonextremal {\color{revisionBlue}term}. Then $p$ is nonnegative iff it is SAGE. \label{cor:nearlySagePolyEqNNP}
\end{corollary}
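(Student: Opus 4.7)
The plan is to reduce the polynomial statement to the signomial statement in Theorem \ref{thm:SageEqNng} by passing to the signomial representative $q = \Sig(\emat,\vct{\hat{c}})$ of $p$. The ``SAGE $\Rightarrow$ nonnegative'' direction is immediate, since by definition $p$ is a SAGE polynomial iff $q$ is a SAGE signomial, and SAGE signomials are nonnegative. For the converse I would assume $p \geq 0$ on $\R^n$ and aim to show $q \geq 0$ on $\R^n$; Theorem \ref{thm:SageEqNng} would then place $\vct{\hat{c}}$ in $\csage{\emat}$, which is exactly the assertion that $p$ is a SAGE polynomial.

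First, I would apply Theorem \ref{thm:needNonnegOfFaces} to the signomial representative, obtaining $\csage{\emat} = \oplus_{i=1}^k \csage{\emat^{(i)}}$, and establish the polynomial analogue $\cnnp{\emat} = \oplus_{i=1}^k \cnnp{\emat^{(i)}}$ by the same Reznick-style directional-scaling argument that underlies Lemma \ref{lem:ifgisface}, combined with the standard fact that vertex terms of a nonnegative polynomial must carry even exponents and nonnegative coefficients. This reduces the task to showing, face by face, that $p_i = \Poly(\emat^{(i)},\vct{c}^{(i)}) \geq 0$ implies $q^{(i)} := \Sig(\emat^{(i)},\vct{\hat{c}}^{(i)}) \geq 0$.

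For a fixed face I would exhibit a sign pattern $\vct{s}\in\{\pm 1\}^n$ for which $q^{(i)}(\ln \vct{x}) = p_i(\vct{s}\odot \vct{x})$ on $\R^n_{++}$. Comparing monomial expansions, this identity holds precisely when $c^{(i)}_j\vct{s}^{\evec_j} \leq 0$ for every $\evec_j \not\in(2\N)^n$ appearing on the face. If the face contains at most one nonextremal term, the extremal coefficients already sit on even exponents with nonnegative values, so at most one odd term must be flipped negative, and toggling a single coordinate of $\vct{s}$ whose exponent component is odd in that nonextremal term does the job. If the face is simplicial with two nonextremal terms, the orthant-dominance hypothesis on $p_i$ supplies such an $\vct{s}$ directly. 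Either way $q^{(i)}(\ln\vct{x}) = p_i(\vct{s}\odot\vct{x}) \geq 0$ on $\R^n_{++}$, giving $q^{(i)} \geq 0$. Theorem \ref{thm:SageEqNng} then places $\vct{\hat{c}}^{(i)} \in \csage{\emat^{(i)}}$, and summing over $i$ recovers $\vct{\hat{c}} \in \csage{\emat}$.

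The hard part is the polynomial face-restriction $\cnnp{\emat} = \oplus_i \cnnp{\emat^{(i)}}$: although it parallels Lemma \ref{lem:ifgisface}, the underlying scaling argument must interact correctly with sign-flips of $\vct{x}$, and this in turn relies on the parity structure of the vertex exponents of $\newpoly{\emat}$. Once that lemma is in hand, the two face-types allowed by the hypothesis assemble cleanly into the full statement.
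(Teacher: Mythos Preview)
Your proposal is correct and matches the route the paper indicates but does not spell out: the paper states only that the corollary is ``obtained by viewing Theorem~\ref{thm:SageEqNng} through the lens of orthant-dominance,'' and your face-by-face passage to the signomial representative is exactly how that slogan unpacks.

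One correction worth noting: the polynomial face-restriction $\cnnp{\emat} = \oplus_i \cnnp{\emat^{(i)}}$ is not the hard part you make it out to be, and in particular does \emph{not} depend on any parity structure of the vertex exponents. Because the $\evec_i$ are integer vectors, every face of $\newpoly{\emat}$ admits an integral outer normal $\vct{u}\in\Z^n$; the substitution $x_j \mapsto t^{u_j} x_j$ with $t>0$ is then a bijection of all of $\R^n$ (sign-flips never enter), and dividing by $t^{r}$ and sending $t\to\infty$ isolates $p_F$ exactly as in Lemma~\ref{lem:ifgisface}. The parity reasoning is only needed later, when you argue that a nonnegative $p_i$ with at most one nonextremal term is automatically orthant-dominated---and that step you already have right.
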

{\color{revisionBlue}Unfortunately it is not possible to reduce the dependence of Corollary \ref{cor:nearlySagePolyEqNNP} on the coefficient vector $\vct{c}$ of the polynomial $p$.
The obstruction is that taking a signomial representative is not without loss of generality, as the case $\emat = [0, 1, 3, 4]$ shows.
}

{\color{revisionBlue}
To more deeply understand the polynomial SAGE cone it is necessary to study its extreme rays, as wells as its sparsity preservation properties.
We now show how this can be done by leveraging Theorems \ref{thm:restrictSS} and \ref{thm:extreme_rays} from Section \ref{sec:SAGEcontributions}.}
\begin{theorem}\label{thm:sage_poly_as_sum_of_age_poly}
	Defining the cone of ``AGE polynomials'' for exponents $\emat$ and index $k$ as
	\begin{align}
	\cpolyage{\emat,k} 
	\doteq \{   \vct{c} :&~\Poly(\emat,\vct{c}) \text{ is globally nonnegative, and} \nonumber \\
	&~ \vct{c}_{\setminus k} \geq \vct{0} , ~c_i = 0 \text{ for all } i \neq k \text{ with } \evec_i \not\in (2\mathbb{N})^n \},\label{eq:def_age_poly}
	\end{align}
	we have $\sum_{k=1}^m \cpolyage{\emat,k} = \cpolysage{\emat}$.
\end{theorem}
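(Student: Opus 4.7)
The plan is to establish both inclusions by moving between a polynomial $\Poly(\emat,\vct{c})$ and its signomial representative $\Sig(\emat,\vct{\hat{c}})$ from \eqref{eq:sig_rep_choose_c}, using the characterization \eqref{eq:c_poly_sage_def_3_efficient} of $\cpolysage{\emat}$ together with the sparsity structure supplied by Theorem \ref{thm:restrictSS}.

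For the inclusion $\sum_k\cpolyage{\emat,k}\subseteq\cpolysage{\emat}$, I would start from a decomposition $\vct{c}=\sum_k\vct{c}^{(k)}$ with $\vct{c}^{(k)}\in\cpolyage{\emat,k}$ and form the signomial representative $\vct{\hat{c}}^{(k)}$ of each summand. The defining constraints of $\cpolyage{\emat,k}$ force $c^{(k)}_j=0$ for every non-even $j\neq k$, so $\vct{\hat{c}}^{(k)}$ agrees with $\vct{c}^{(k)}$ off coordinate $k$ and satisfies $\vct{\hat{c}}^{(k)}_{\setminus k}\geq\vct{0}$. To verify $\vct{\hat{c}}^{(k)}\in\cage{\emat,k}$, I would fix $\vct{y}\in\R^n$ with $\vct{z}=\exp\vct{y}$ and produce a sign vector $\sigma\in\{\pm 1\}^n$ for which $c^{(k)}_k\sigma^{\evec_k}=-|c^{(k)}_k|$; the identity $\Sig(\emat,\vct{\hat{c}}^{(k)})(\vct{y})=\Poly(\emat,\vct{c}^{(k)})(\sigma\cdot\vct{z})\geq 0$ then follows. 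Such a $\sigma$ always exists: when $\evec_k\in(2\N)^n$ the signomial representative coincides with the polynomial on the positive orthant and $\sigma=\vct{1}$ works, and when $\evec_k\notin(2\N)^n$ some odd coordinate of $\evec_k$ can be flipped to realize either sign of $\sigma^{\evec_k}$. Summing yields $\vct{\hat{c}}=\sum_k\vct{\hat{c}}^{(k)}\in\csage{\emat}$, and the sparsity at non-even indices makes the inequalities in \eqref{eq:c_poly_sage_def_3_efficient} immediate.

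For the reverse inclusion, I would take $\vct{c}\in\cpolysage{\emat}$, extract its signomial representative $\vct{\hat{c}}^*\in\csage{\emat}$, and apply Theorem \ref{thm:restrictSS} with $N^*\doteq\{j:\hat{c}^*_j<0\}$ to obtain a decomposition $\vct{\hat{c}}^*=\sum_{k\in N^*}\vct{\hat{c}}^{(k)}$ in which $\vct{\hat{c}}^{(k)}\in\cage{\emat,k}$ and $\hat{c}^{(k)}_j=0$ for every $j\in N^*\setminus\{k\}$. The pivotal side-effect is that every non-even index $j$ with $c_j\neq 0$ automatically lies in $N^*$ (because $\hat{c}^*_j=-|c_j|<0$), and together with the AGE constraint $\hat{c}^{(k)}_j\geq 0$ for $j\neq k$, this forces $\hat{c}^{(k)}_j=0$ at every non-even $j\neq k$. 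I would then set $\vct{c}^{(k)}=\vct{0}$ for $k\notin N^*$ and, for $k\in N^*$, define $c^{(k)}_j=\hat{c}^{(k)}_j$ for $j\neq k$ together with $c^{(k)}_k=c_k$; component-by-component bookkeeping then confirms $\vct{c}=\sum_k\vct{c}^{(k)}$.

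The hard step is verifying that each $\Poly(\emat,\vct{c}^{(k)})$ is globally nonnegative. Thanks to the sparsity established above, $\vct{c}^{(k)}$ is supported on $\{k\}\cup\{j:\evec_j\in(2\N)^n\}$, so evaluation at $\vct{x}\in\R^n$ contributes only a single sign factor $\sigma^{\evec_k}$ with $\sigma=\mathrm{sign}(\vct{x})$; the bound $c_k\sigma^{\evec_k}\geq -|c_k|$ reduces the polynomial inequality to the signomial inequality $\Sig(\emat,\vct{\hat{c}}^{(k)})(\log|\vct{x}|)\geq 0$ guaranteed by $\vct{\hat{c}}^{(k)}\in\cage{\emat,k}$ (using $\hat{c}^{(k)}_k=\hat{c}^*_k=-|c_k|$ in the non-even case, with the even case collapsing to $c_k=\hat{c}^{(k)}_k$), and continuity handles $\vct{x}$ with zero coordinates. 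Theorem \ref{thm:restrictSS} is indispensable here: without the enforced sparsity at non-even indices, the positive signomial terms we zero out in passing to the polynomial could have been propping up the signomial's nonnegativity certificate, and their removal would break the argument.
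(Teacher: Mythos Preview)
Your proposal is correct and follows essentially the same route as the paper: both directions go through the signomial representative, and the nontrivial inclusion $\cpolysage{\emat}\subseteq\sum_k\cpolyage{\emat,k}$ is obtained by applying Theorem~\ref{thm:restrictSS} to $\vct{\hat{c}}$ with $N=\{j:\hat c_j<0\}$, observing that non-even indices with $c_j\neq 0$ land in $N$ so that the resulting summands vanish on all non-even $j\neq k$, and then reinstating the sign at coordinate $k$ to recover polynomial AGE vectors summing to $\vct{c}$. Your treatment of the easy inclusion is in fact more careful than the paper's (which simply declares it ``obvious''), as you explicitly produce the orthant sign $\sigma$ realizing $c_k\sigma^{\evec_k}=-|c_k|$ to transfer polynomial nonnegativity to the signomial representative.
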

\begin{proof}
    {\color{revisionBlue}The inclusion $\cpolyage{\emat,k} \subset \cpolysage{\emat}$ is obvious, since polynomials satisfying \eqref{eq:def_age_poly} have AGE signomial representatives.
    We must show the inclusion $\cpolysage{\emat} \subset \sum_{k=1}^m \cpolyage{\emat,k}$.}

	Given a polynomial $p = \Poly(\emat,\vct{c})$, testing if $\vct{c}$ belongs to $\cpolysage{\emat}$ will reduce to testing if $\vct{\hat{c}}$ (given by Equation \eqref{eq:sig_rep_choose_c}) belongs to $\csage{\emat}$. 
	{\color{revisionBlue}Henceforth let $\vct{\hat{c}} \in \csage{\emat}$ be fixed and set $N = \{ i : \hat{c}_i < 0\}$.
	By Theorem \ref{thm:restrictSS}, there exist vectors $\{ \vct{\hat{c}}^{(i)} \in \cage{\emat,i} \}_{i \in N}$ where $\hat{c}_i^{(i)} = \hat{c}_i < 0$ for each $i$ and $\hat{c}_j^{(i)} = 0$ for all $j \in N \setminus \{i\}$.}
	The sign patterns here are important: $\vct{\hat{c}}^{(i)}$ is supported on the index set $ \{i \} \cup ( [m] \setminus N)$, and $\hat{c}_j^{(i)} \geq 0$ for all $j$ in $[m] \setminus N$.
	By construction of $\vct{\hat{c}}$, any index $j$ in $[m] \setminus N$ corresponds to an exponent vector $\evec_j$ in $(2\mathbb{N})^n$. 
	Therefore the carefully chosen vectors $\{ \vct{\hat{c}}^{(i)} \}_{i \in N}$ define not only AGE signomials, but also \textit{AGE polynomials} $\hat{p}_i = \Poly(\emat,\vct{\hat{c}}^{(i)} )$.
	{\color{revisionBlue}Lastly, for each index $i \in N$ set $\vct{c}^{(i)}$ by $\vct{c}_{\setminus i}^{(i)} = \vct{\hat{c}}_{\setminus i}^{(i)}$, and $c^{(i)}_i = -1 \cdot \text{sign}( c_i ) \cdot \hat{c}_i^{(i)}$. The resulting polynomials $p_i = \Poly(\emat,\vct{c}^{(i)})$ inherit the AGE property from $\hat{p}_i$, and sum to $p$.
	As we have decomposed our SAGE polynomial into an appropriate sum of ``AGE polynomials,'' the proof is complete.}
\end{proof}

\begin{corollary}\label{cor:poly_sparse_decomp}
    {\color{revisionBlue}Any SAGE polynomial can be decomposed into a sum of AGE polynomials in a manner that is cancellation-free.}
\end{corollary}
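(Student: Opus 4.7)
The plan is to show that the AGE-polynomial decomposition already constructed inside the proof of Theorem \ref{thm:sage_poly_as_sum_of_age_poly} is itself cancellation-free, so no new construction is required. First I would fix the meaning of ``cancellation-free'': given a decomposition $p = \sum_i p_i$ of $p = \Poly(\emat,\vct{c})$ into AGE polynomials $p_i = \Poly(\emat,\vct{c}^{(i)})$, cancellation-freeness asserts that for every $j \in [m]$ the nonzero values among $\{c^{(i)}_j\}_i$ all share the sign of $c_j$ (so that no additive cancellation occurs when forming $c_j = \sum_i c_j^{(i)}$).

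Next I would unpack the construction of the $\vct{c}^{(i)}$ used in Theorem \ref{thm:sage_poly_as_sum_of_age_poly}: one takes the signomial representative $\vct{\hat{c}}$ of $\vct{c}$, forms $N = \{i : \hat{c}_i < 0\}$, applies Theorem \ref{thm:restrictSS} to obtain AGE-signomial vectors $\{\vct{\hat{c}}^{(i)} \in \cage{\emat,i}\}_{i \in N}$ with $\hat{c}^{(i)}_j = 0$ for distinct $i,j \in N$ and $\sum_{i \in N} \vct{\hat{c}}^{(i)} = \vct{\hat{c}}$, and then sets $c^{(i)}_j = \hat{c}^{(i)}_j$ for $j \neq i$ together with $c^{(i)}_i = -\text{sign}(c_i)\hat{c}^{(i)}_i$. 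From there the verification splits into two index classes.

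For each $j \in N$, Theorem \ref{thm:restrictSS} guarantees that only the summand indexed by $i = j$ contributes a nonzero entry at position $j$, and by construction $c^{(j)}_j = -\text{sign}(c_j)\hat{c}_j$ shares its sign with $c_j$; since the other $c^{(i)}_j$ vanish, cancellation-freeness at this coordinate is automatic. For each $j \in [m] \setminus N$, every $\hat{c}^{(i)}_j$ is nonnegative because $\vct{\hat{c}}^{(i)} \in \cage{\emat,i}$ enforces $\vct{\hat{c}}^{(i)}_{\setminus i} \geq \vct{0}$ and we have $i \neq j$ (since $i \in N$ while $j \notin N$); hence $c^{(i)}_j = \hat{c}^{(i)}_j \geq 0$ for every $i$, and these nonnegative contributions sum to $c_j$, which is itself nonnegative by the definition of the signomial representative on indices outside $N$. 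Both cases yield sign-matched, non-cancelling sums.

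I do not anticipate a serious obstacle here: the real work was already carried out in Theorems \ref{thm:restrictSS} and \ref{thm:sage_poly_as_sum_of_age_poly}, and the corollary amounts to observing that the sparsity pattern produced by the former and the sign bookkeeping performed by the latter together yield the desired cancellation-freeness. The only point requiring care is to state explicitly that ``cancellation-free'' is interpreted coordinatewise on $\vct{c}$, and to track the sign convention $c^{(i)}_i = -\text{sign}(c_i)\hat{c}_i^{(i)}$ so that the single nonzero contribution on each index in $N$ indeed inherits the sign of $c_i$.
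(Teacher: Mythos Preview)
Your proposal is correct and takes essentially the same approach as the paper: the paper's proof is a one-line pointer to the construction in Theorem \ref{thm:sage_poly_as_sum_of_age_poly}, and you simply unpack that construction and verify coordinatewise that the resulting $\vct{c}^{(i)}$ are sign-consistent with $\vct{c}$. The case split on $j \in N$ versus $j \notin N$ and the sign tracking for $c^{(i)}_i = -\operatorname{sign}(c_i)\hat{c}^{(i)}_i$ are exactly the details one would fill in to justify the paper's claim.
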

\begin{proof}
    {\color{revisionBlue}The cancelation-free decomposition is given constructively in the proof of Theorem \ref{thm:sage_poly_as_sum_of_age_poly}.}
\end{proof}

\begin{corollary}
	{\color{revisionBlue}If $\vct{c} \in \R^m$ generates an extreme ray of $\cpolysage{\emat}$, then $\{ \evec_i \,:\, i \in [m], c_i \neq 0 \}$ is either a singleton or a simplicial circuit.}\label{cor:extreme_rays_polynomial}
\end{corollary}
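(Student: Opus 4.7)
The plan is to reduce the claim entirely to Theorem \ref{thm:extreme_rays}. By Theorem \ref{thm:sage_poly_as_sum_of_age_poly}, $\cpolysage{\emat} = \sum_{k=1}^m \cpolyage{\emat,k}$, and the standard fact that each extreme ray of a Minkowski sum of convex cones is also an extreme ray of one of the summands means it suffices to characterize the extreme rays of each individual $\cpolyage{\emat,k}$. Fix $k$, let $\vct{c}$ generate an extreme ray of $\cpolyage{\emat,k}$, and split the analysis on the sign of $c_k$.

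If $c_k \geq 0$, the definition \eqref{eq:def_age_poly} forces $\vct{c} \geq \vct{0}$ together with $c_i = 0$ for every $i \neq k$ with $\evec_i \notin (2\N)^n$, so $\vct{c}$ lives in a coordinate-subspace orthant whose extreme rays are simply the surviving coordinate unit vectors. Each such ray has singleton support.

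The substantive case is $c_k < 0$. Here the sparsity clause in \eqref{eq:def_age_poly} combined with $c_k < 0$ makes the signomial representative defined in \eqref{eq:sig_rep_choose_c} coincide with $\vct{c}$ itself, so $\vct{c}$ lies in the signomial AGE cone $\cage{\emat,k}$. I would then replay the simplicial-circuit decomposition constructed in the proof of Theorem \ref{thm:extreme_rays}: starting from any AGE witness $\vct{\nu}$ for $\vct{c}$, that proof produces $\{\vct{c}^{(i)}\}_{i=1}^\ell \subset \cage{\emat,k}$ with simplicial-circuit supports and an elementwise nonnegative remainder $\vct{c}^{(\ell+1)} \in \cage{\emat,k}$, all summing to $\vct{c}$. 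The step that requires the most care -- and which I expect to be the main obstacle -- is verifying that every summand actually belongs to the more restrictive cone $\cpolyage{\emat,k}$, not merely to $\cage{\emat,k}$. This rests on two observations: finiteness of the relative entropy $\relent{\vct{\nu}}{e\vct{c}_{\setminus k}}$ forces $\nu_j = 0$ whenever $c_j = 0$ (otherwise a $\nu_j \ln(\nu_j/0)$ term diverges), and the decomposition procedure sets $c^{(i)}_j = 0$ whenever $\nu_j = 0$ while choosing the $\vct{\nu}^{(i)}$ with supports inside $\supp \vct{\nu}$. Consequently $\supp \vct{c}^{(i)}_{\setminus k} \subset \supp \vct{c}_{\setminus k} \subset \{j : \evec_j \in (2\N)^n\}$, which is precisely the polynomial AGE sparsity requirement; the remainder $\vct{c}^{(\ell+1)}$ inherits the same support profile and is nonnegative, so it also lies in $\cpolyage{\emat,k}$.

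With the decomposition entirely inside $\cpolyage{\emat,k}$, extremality of $\vct{c}$ forces every summand to be a nonnegative scalar multiple of $\vct{c}$. Since $c_k < 0$ and $\vct{c}^{(\ell+1)} \geq \vct{0}$, the remainder cannot contribute, so some $\vct{c}^{(i)}$ with $i \leq \ell$ satisfies $\vct{c}^{(i)} = \alpha_i \vct{c}$ with $\alpha_i > 0$, giving $\supp \vct{c} = \supp \vct{c}^{(i)}$. Because $\supp \vct{c}^{(i)}$ is a simplicial circuit by Theorem \ref{thm:extreme_rays}, so is $\supp \vct{c}$. Once the sparsity-preservation step of the third paragraph is in place, the corollary follows directly from the signomial result.
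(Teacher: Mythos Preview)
Your reduction via Theorem~\ref{thm:sage_poly_as_sum_of_age_poly} and the case $c_k\geq 0$ are fine.  The gap is in the case $c_k<0$ when $\evec_k\notin(2\N)^n$.  You verify that each summand $\vct{c}^{(i)}$ from the Theorem~\ref{thm:extreme_rays} decomposition satisfies the \emph{sparsity} clause of \eqref{eq:def_age_poly}, and you know $\vct{c}^{(i)}\in\cage{\emat,k}$; but you never check the \emph{polynomial nonnegativity} clause.  When $\evec_k$ is odd, $\Poly(\emat,\vct{c}^{(i)})$ is nonnegative iff $(\vct{c}^{(i)}_{\setminus k},-|c^{(i)}_k|)\in\cage{\emat,k}$, because the monomial $c^{(i)}_k\vct{x}^{\evec_k}$ changes sign across orthants.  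Signomial nonnegativity of $\vct{c}^{(i)}$ gives this only when $c^{(i)}_k\leq 0$, and nothing in the construction rules out $c^{(i)}_k=\relent{\vct{\nu}^{(i)}}{e\vct{c}^{(i)}_{\setminus k}}>0$.  (Concretely: take $n=1$, $\emat=[0,2,6,3]$, $k=4$, $\vct{c}\approx(0.1,2,1,-1.09)$ with $\vct{\nu}=(1,3,2)$; the decomposition over the circuits $\{0,6,3\}$ and $\{2,6,3\}$ produces a summand with $c^{(1)}_4>0$ whose polynomial is negative at $x=-1$.)  The same defect hits your claim about the remainder: $\vct{c}^{(\ell+1)}\geq\vct{0}$ does \emph{not} put it in $\cpolyage{\emat,k}$ when $\evec_k$ is odd and $c^{(\ell+1)}_k>0$, since then $\vct{e}_k\notin\cpolyage{\emat,k}$.

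The paper avoids this by not replaying the decomposition at all: it observes that $\cpolyage{\emat,k}$ is, after dropping the forced-zero coordinates, carried onto $\cage{\hat{\emat},k}$ by the map $c_k\mapsto -|c_k|$ (identity when $\evec_k$ is even), and this correspondence sends extreme rays to extreme rays, so Theorem~\ref{thm:extreme_rays} applies directly.  If you want to salvage your route, one clean patch is: set $\vct{s}=\sum_{i:c^{(i)}_k<0}\vct{c}^{(i)}$, which lies in $\cpolyage{\emat,k}$ since $s_k\leq c_k<0$; scale by $t=c_k/s_k\in(0,1]$ so that $(t\vct{s})_k=c_k$, giving $\vct{c}=t\vct{s}+(\vct{c}-t\vct{s})$ with the second piece nonnegative and zero in the $k$-th slot, hence in $\cpolyage{\emat,k}$.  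Extremality then forces $t\vct{s}=\vct{c}$, and a second application of extremality to $\vct{s}=\sum_{i:c^{(i)}_k<0}\vct{c}^{(i)}$ (each summand now genuinely in $\cpolyage{\emat,k}$) yields the simplicial-circuit support.
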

\begin{proof}
	{\color{revisionBlue}In view of Theorem \ref{thm:sage_poly_as_sum_of_age_poly}, it suffices to show that for fixed $k$ the extreme rays of $\cpolyage{\emat,k}$ are supported on single coordinates, or simplicial circuits.
	This follows from Theorem \ref{thm:extreme_rays}, since vectors in $\cpolyage{\emat,k}$ are -- up to a sign change on their $k^{\text{th}}$ component -- in 1-to-1 correspondence with vectors in $\cage{\mtx{\hat{\emat}},k}$, where $\mtx{\hat{\emat}}$ is obtained by dropping suitable columns from $\emat$.}
\end{proof}

\subsection{AM/GM proofs of nonnegativity, circuits, and SAGE}\label{subsec:amgm}

{\color{revisionBlue}
In 1989, Reznick defined an \textit{agiform} as any positive multiple of a homogeneous polynomial
$f = \Poly([\emat,\vct{\beta}], [\vct{\lambda},-1]^\intercal)$, where $\emat \in (2\N)^{n \times m}$ and $\vct{\beta} = \emat \vct{\lambda}$ for a weighting vector $\vct{\lambda} \in \Delta_m$ \cite{Reznick1989}.
Agiforms have AGE signomial representatives, which follows by plugging $\vct{\nu} = \vct{\lambda}$ into \eqref{eq:ageRelEnt}.
Reznick's investigation concerned extremality in the cone of nonnegative polynomials, and identified a specific subset of simplicial agiforms which met the extremality criterion  \cite[Theorem 7.1]{Reznick1989}.

Agiform-like functions were later studied by Paneta, Koeppl, and Craciun for analysis of biochemical reaction networks \cite[Proposition 3]{Pantea2012}.
Paneta et al. spoke in terms of \textit{posynomials} $f(\vct{x}) = \sum_{i=1}^{k+1} c_i \vct{x}^{\evec_i}$ where all $c_i \geq 0$;
a posynomial $f$ was said to \textit{dominate} the monomial $\vct{x}^{\vct{\beta}}$ if $\vct{x} \mapsto f(\vct{x}) - \vct{x}^{\vct{\beta}}$ was nonnegative on $\R^n_+$.
If we adopt the notation where $\Theta(\vct{c},\vct{\lambda}) = \prod_{i=1}^{k+1}(c_i / \lambda_i)^{\lambda_i}$, \cite[Theorem 3.6]{Pantea2012} says that in the case of a simplicial Newton polytope (i.e. $k=n$), monomial domination is equivalent to $1 \leq \Theta(\vct{c},\vct{\lambda})$ where $\vct{\lambda}$ gives the barycentric coordinates for $\vct{\beta} \in \newpoly{\emat}$.

A few years following Paneta et al., Iliman and de Wolff suggested taking sums of nonnegative circuit polynomials, which are nonnegative polynomials $f = \Poly([\emat,\vct{\beta}], [\vct{c},b]^\intercal)$ where $\{\evec_i\}_{i=1}^{n+1} \cup \{\vct{\beta}\}$ form a simplicial circuit \cite{SONC1}.
Iliman and de Wolff's Theorem 1.1 states that if all $\evec_i$ are even, $f$ is a circuit polynomial, and $\vct{\beta} \in \newpoly{\emat}$ has barycentric coordinates $\vct{\lambda} \in \Delta_{n+1}$, then $f$ nonnegative if and only if
\begin{equation}
    \text{ either } \quad |b| \leq  \Theta(\vct{c},\vct{\lambda}) \text{ and } \vct{\beta} \not\in (2\mathbb{N})^n \quad \text{ or } \quad -b \leq \Theta(\vct{c},\vct{\lambda}) \text{ and } \vct{\beta} \in (2\mathbb{N})^n. \label{eq:circuit_constraint}
\end{equation}
It is clear that \cite[Theorem 1.1]{SONC1} extends \cite[Theorem 3.6]{Pantea2012}, to account for sign changes of $b \cdot \vct{x}^{\vct{\beta}}$ and to impose no scaling on $|b|$.

The approach of taking sums of nonnegative circuit polynomials is now broadly known as ``SONC.''
Prior formulations for the SONC cone work by enumerating every simplicial circuit which could possibly be of use in a SONC decomposition (see \cite[\S 5.2]{SONC3}, and subsequently \cite{wangAGEpolynomial,wangSoncSupports}).
The circuit enumeration approach is extremely inefficient, as Example \ref{ex:exponential_circuits} shows an $m$-term polynomial can contain as many as $2^{(m-1)/2}$ simplicial circuits.

\noindent\begin{minipage}[t]{0.475\textwidth}
\strut\vspace*{-\baselineskip}\newline
    \includegraphics[width=\textwidth]{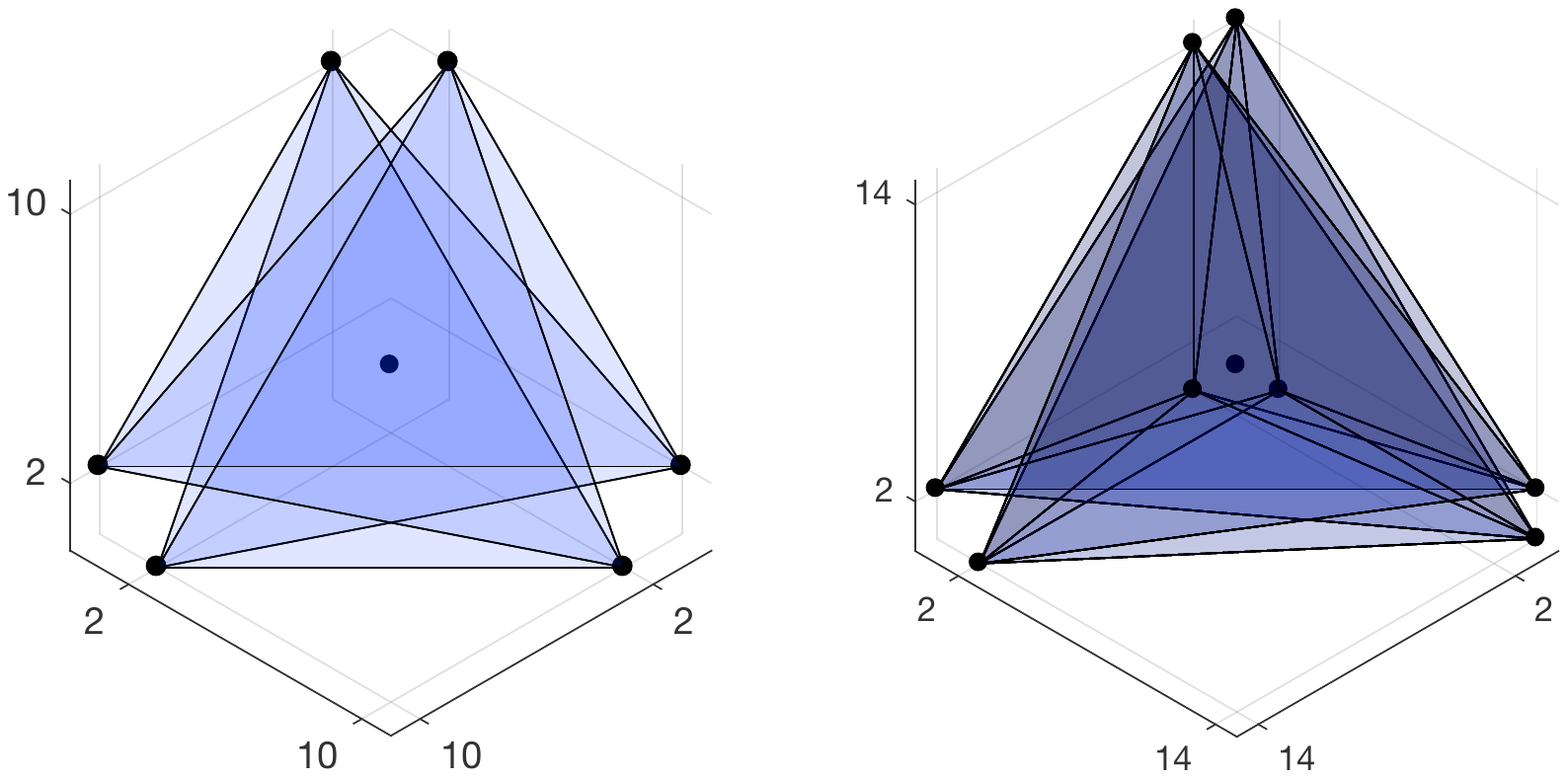}
\end{minipage}\hfill\begin{minipage}[t]{0.5\textwidth}
\strut\vspace*{-2\baselineskip}\newline
    \begin{example}\label{ex:exponential_circuits}
    Let $d$ be divisible by 2 and $n$. Construct an $n \times 2n$ matrix $\emat$ by setting $\evec_{2i-1}$ and $\evec_{2i}$ to distinct points in $\N^n \cap d \Delta_n$ adjacent to $d\vct{e}_i$. Then for large enough $d$, $\vct{\beta} = d\vct{1}/n$ will be contained in exactly $2^n$ simplices.
    Left: $(d,n)=(12,3)$, and a projection of $(d,n)=(16,4)$.
    \end{example}
\end{minipage}
Circuit enumeration is not merely a theoretical issue.
When using the heuristic circuit-selection technique from \cite{SONCExp}, Seidler and de Wolff's POEM software package fails to certify nonnegativity of the AGE polynomial $f(x,y) = (x-y)^2 + x^2 y^2$ and moreover only returns a bound $f^\star \geq -1$ \cite{poem:software}.

Of course- Corollary \ref{cor:extreme_rays_polynomial} tells us that a polynomial admits a SAGE certificate if and only if it admits a SONC certificate.
This is good news, since Theorem \ref{thm:poly_sage_tractable} says we can optimize over this set in time depending polynomially on $m$. In particular, we may avoid SONC's severe problems of circuit enumeration and circuit selection.
The qualitative distinction here is that while Paneta et al. and Iliman and de Wolff consider the weights $\vct{\lambda}$ as fixed (given by barycentric coordinates), the analogous quantity $\vct{\nu}$ in the SAGE approach is \textit{an optimization variable}.
At a technical level, the relative entropy formulation \eqref{eq:ageRelEnt} affords a joint convexity whereby SAGE can search simultaneously over coefficients $\vct{c}^{(i)}$ and weighting vectors $\vct{\nu}^{(i)}$.
As our proof of Theorem \ref{thm:poly_sage_tractable} points out, we can be certain that $[\emat_{\setminus i} - \evec_i \vct{1}^{\intercal}]\vct{\nu}^{(i)} = \vct{0}$ holds in exact arithmetic simply by defining $\vct{\nu}^{(i)} \leftarrow \mtx{M}_i \vct{w}^{(i)}$ for the indicated matrix $\mtx{M}_i$.

} 

\subsection{Comparison to existing results in the SONC literature} \label{subsec:sonc}
{\color{revisionBlue}
Due to the equivalence of the class of nonnegative polynomials induced by the SAGE and the SONC approaches, some of our results have parallels in the SONC literature. 

Corollary \ref{corr:lin_indep_sage_poly} is not stated in the literature, though it may be deduced from \cite[Corollary 7.5]{SONC1}.
Iliman and de Wolff prove \cite[Corollary 7.5]{SONC1} by signomializing $g(\vct{x}) = f(\exp \vct{x})$ and introducing an additional regularity condition so that $\nabla g(\vct{x}) = \vct{0}$ at exactly one $\vct{x} \in \R^n$.
Our proof of Corollary \ref{corr:lin_indep_sage_poly} stems from Theorem \ref{thm:linear_indep_and_all_nonext_neg_is_exact}, which employs a convex duality argument applicable to constrained signomial optimization problems in the manner of Corollary \ref{cor:constrained_case_exact}.

Wang showed that nonnegative polynomials in which at most one term $c_i \vct{x}^{\evec_i}$ takes on a negative value at some $\vct{x} \in \R^n$ (either $c_i < 0$ or $\evec_i \not\in (2\mathbb{N})^n$) are SONC polynomials \cite[Theorem 3.9]{wangAGEpolynomial}.
This result can be combined with the definition of AGE polynomial given in Theorem \ref{thm:sage_poly_as_sum_of_age_poly} in order to prove a weaker form of Corollary \ref{cor:extreme_rays_polynomial}, where all $\evec_i$ belong to $\ext\newpoly{\emat}$ or $\sint\newpoly{\emat}$.
We emphasize that Corollary \ref{cor:extreme_rays_polynomial} is not responsible for the major efficiency gains of SAGE from Theorem \ref{thm:poly_sage_tractable}; the SONC formulation in \cite[\S 5]{wangAGEpolynomial} uses $2^{(m-1)/2}$ circuits for the $m$-term polynomials from Example \ref{ex:exponential_circuits}.

Finally, in a result that was announced contemporaneously to the original submission of the present paper, Wang showed that summands in a SONC decomposition of a polynomial $f = \Poly(\emat,\vct{c})$ may have supports restricted to $\emat$ without loss of generality
\cite[Theorem 4.2]{wangSoncSupports}.
In light of the equivalence between the class of SAGE polynomials and of SONC polynomials, this result may be viewed as a weaker analog of our Corollary \ref{cor:poly_sparse_decomp}; specifically, \cite[Theorem 4.2]{wangSoncSupports} shows SONC certificates are sparsity-preserving but it does not provide a cancellation-free decomposition.

The distinctions between our polynomial Corollaries \ref{corr:lin_indep_sage_poly} and \ref{cor:nearlySagePolyEqNNP} versus our signomial Theorems \ref{thm:linear_indep_and_all_nonext_neg_is_exact} and \ref{thm:SageEqNng} make clear that polynomial results should not be conflated with signomial results.
With the exception of Section \ref{sec:PolyNNG}, our setup and results in this paper pertain to the class of signomials, which in general can have $\evec_i \in (\R \setminus \Q)^n$.
The developments in the SONC literature only consider polynomials, and employ analysis techniques of an algebraic nature which rely on integrality of exponents in fundamental ways (c.f. \cite[Theorem 4.2]{wangSoncSupports}).
In contrast, our techniques are rooted in convex duality and are applicable to the broader question of certifying signomial nonnegativity.
}  

\subsection{SAGE and SOS}\label{subsec:sos}

{\color{revisionBlue}
The SOS approach to polynomial nonnegativity considers polynomials $f$ in $n$ variables of degree $2d$, and attempts to express $f(\vct{x}) = L(\vct{x})^\intercal \mtx{P} L(\vct{x})$ where $\mtx{P}$ is a PSD matrix and $L : \R^n \to \R^{n + d \choose d}$ is a lifting which maps $\vct{x}$ to all monomials of degree at-most $d$ evaluated at $\vct{x}$ \cite{parilloPhD,Lasserre2001,shor}.
The identity $f(\vct{x}) = L(\vct{x})^\intercal \mtx{P} L(\vct{x})$ can be enforced with linear equations on the coefficients of $f$ and the entries of $\mtx{P}$, so deciding SOS-representability reduces to a semidefinite program.

Because it is extremely challenging to solve semidefinite programs at scale, several modifications to SOS have been proposed to offer reduced complexity.
Kojima et. al built on earlier work of Reznick \cite{extremalPSDFormsWithFewTerms} to replace the lifting ``$L$'' appearing in the original SOS formulation with a smaller map using fewer monomials \cite{sparsesosDecideSupports2004}. 
Their techniques had meaningful use-cases, but could fail to perform any reduction in some very simple situations \cite[Proposition 5.1]{sparsesosDecideSupports2004}.
Subsequently, Waki et. al introduced the \textit{correlative sparsity} heuristic to induce structured sparsity in the matrix variable $\mtx{P}$ \cite{sparsesosChordal2006}.
Shortly thereafter Nie and Demmel suggested replacing the standard lifting by a collection of smaller $\{L_i\}_i$, so as to express $f(\vct{x}) = \sum_i L_i(\vct{x})^\intercal \mtx{P}_i L_i(\vct{x})$ with order ${k + d \choose d}$ PSD matrices $\mtx{P}_i$ for some $k \ll n$ \cite{sparsesosDemmel2008}.
Very recently, Ahmadi and Majumdar suggested one use the standard lifting together with a \textit{scaled diagonally dominant} matrix $\mtx{P}$ of order ${n+d \choose d}$;\footnote{\textcolor{revisionBlue}{A symmetric matrix $\mtx{P}$ is scaled-diagonally-dominant if there exists diagonal $\mtx{D} \succ \mtx{0}$ so that $\mtx{D} \mtx{P} \mtx{D}^\intercal$ is diagonally dominant.
Such matrices can be represented as a sum of $2 \times 2$ PSD matrices with appropriate zero padding.}} these ``SDSOS polynomials'' are precisely those polynomials admitting a decomposition as a sum of binomial squares \cite{SDSOS1}.

Each of these SOS-derived works suffers from a drawback that SOS decompositions may require cancellation on coefficients of summands $f_i = g_i^2$ as one recovers $f= \sum_{i} f_i$.
As a concrete example, consider $f(x,y) = 1 - 2 x^2 y^2 + {\color{revisionBlue}x^8/2 + y^8/2}$; this polynomial is nonnegative (in fact, AGE) and admits a decomposition as a sum of binomial squares.
The trouble is that to decompose $f$ as a sum of binomial squares, the summands $f_i = g_i^2$ require additional terms $+x^4 y^4$ and $-x^4 y^4$.
By contrast, SAGE certificates need only involve the original monomials in $f$, and one may take summand AGE polynomials to be cancellation-free with no loss of generality (Corollary \ref{cor:poly_sparse_decomp}).
The SAGE approach also has the benefit of being formulated with a relative entropy program of size $O(m^2)$ (Theorem \ref{thm:poly_sage_tractable}),
while SOS-derived works have complexity scaling exponentially with a polynomial's degree $d$.

We make two remarks in closing.
First, it is easy to verify that every binomial square is an AGE polynomial, and so SAGE can certify nonnegativity of all SDSOS polynomials.
Second, it is well known that proof systems leveraging the AM/GM inequality (SAGE among them) can certify nonnegativity of some polynomials which are \textit{not} SOS.
A prominent example here is the Motzkin form $f(x,y,z) = x^2 y^4 + x^4 y^2 + z^6 - 3 x^2 y^2 z^2$.

}  

\subsection{Extending SAGE polynomials to a hierarchy}\label{subsec:polyhier}

{\color{revisionBlue}We conclude this section by discussing how to obtain hierarchies for constrained polynomial optimization problems, in a manner which is degree-independent and sparsity preserving.
Adopt the standard form \eqref{eq:constr_opt_standard_form} for minimizing a polynomial $f$ subject to constraint polynomials $\{g_i\}_{i=1}^k$.}
Here, all polynomials are over a common set of exponents \textcolor{revisionBlue}{$\emat \in \N^{n \times m}$}, with $\evec_1 = \vct{0}$ \textcolor{revisionBlue}{and $n \leq m$}.
Our development is based on a hierarchy for signomials that is described in \cite[\S 3.3]{CS16}.

Consider operators $\mathcal{A}$ and $\mathcal{C}$ taking values $\mathcal{A}\left( \Poly(\emat,\vct{c}) \right) = \emat$ and $\mathcal{C}\left( \Poly(\emat,\vct{c}) \right) = \vct{c}$ respectively. 
We shall say {\color{revisionBlue}our} SAGE polynomial hierarchy is indexed by two parameters: $p$ and $q$.
The parameter $p$ controls the complexity of Lagrange multipliers; when $p = 0$, the Lagrange multipliers are simply $\lambda_i \geq 0$.
For general $p$, the Lagrange multipliers are SAGE polynomials over exponents $\emat' \doteq \mathcal{A}\left( \Poly(\emat,\vct{1})^p \right)$.
The parameter $q$ controls the number of constraints in the nonconvex primal problem: {\color{revisionBlue}$H = \{ h_i \}_{i=1}^{k^q}$} are obtained by taking all $q$-fold products of the $g_i$.
Once the Lagrangian {\color{revisionBlue}$\mathcal{L} = f - \gamma - \sum_{h \in H}h \cdot s_h$} is formed, it will be a polynomial over exponents $\emat'' \doteq \mathcal{A}\left( \Poly(\emat,\vct{1})^{p + q} \right)$.
{\color{revisionBlue}By the minimax inequality we have}
\[
{\color{revisionBlue}(f,g)^{(p,q)} \doteq \sup_{\gamma,\{s_h\}_{h \in H}}\{ \gamma \,:\, \mathcal{C}(\mathcal{L}) \in \cpolysage{\emat''},~\mathcal{C}(s_h) \in \cpolysage{\emat'}, \forall \, h\, \in H\} \leq (f,g)^\star.}
\]
\textcolor{revisionBlue}{Following Theorem \ref{thm:poly_sage_tractable}, the above can be solved in time polynomial in $m,k$ for each fixed $p,q$.}
\textcolor{revisionBlue}{As} $p$ and $q$ increase, we obtain improved bounds at the expense of an increase in computation.
Mirroring \cite{CS16}, one can appeal to representation theorems from the real algebraic geometry literature \cite{Schweighofer2002,hierconv1,hierconv2} to prove that {\color{revisionBlue}this hierarchy} can provide arbitrarily accurate lower bounds \textcolor{revisionBlue}{for} sparse polynomial optimization problems in which the constraint set is {\color{revisionBlue}Archimedian (for example, if all variables have explicit finite upper and lower bounds).

Our broader message here -- beyond results on convergence to the optimal value of specific hierarchies -- is that the above construction qualitatively differs from other hierarchies in the literature, because the optimization problems encountered at every level of our construction \emph{depend only on the nonnegative lattice generated by the original exponent vectors $\emat$}.
The theoretical underpinnings of this sparsity-preserving hierarchy trace back to the decomposition result given by Theorem \ref{thm:restrictSS}.
Thus, it is possible to obtain entire families of relative entropy relaxations that are sparsity-preserving, which reinforces our message about the utility of SAGE-based relative entropy optimization for sparse polynomial problems.
}

\section{Towards Necessary and Sufficient Conditions for SAGE versus Nonnegativity}\label{sec:constructCounterExAndDualChar}

We conclude this paper with a discussion on the extent to which our results tightly characterize the distinction between SAGE and nonnegativity for signomials.  This section is split into three parts.
In the first part, we describe a process for identifying cases where $\csage{\emat} \subsetneq \cnns{\emat}$.
This process is illustrated with several examples which suggest that our results from Section \ref{sec:structureSAGEandNNG} are essentially tight.
Section \ref{subsec:conjecturecantimprove} presents a formal conjecture regarding the ways in which our results might be improved, {\color{revisionBlue}and} Section \ref{subsec:dualChar} {\color{revisionBlue}provides} a novel dual formulation for when $\csage{\emat} = \cnns{\emat}$.

\subsection{Constructing examples of non-equality}\label{subsec:constructEx}

Given a matrix of exponent vectors $\emat$, we are interested in finding a coefficient vector $\vct{c}$ so that $f = \Sig(\emat, \vct{c})$ satisfies $f_{\mathsf{SAGE}} < f^\star$. 
If such $\vct{c}$ exists, then it is evident that $\csage{\emat} \neq \cnns{\emat}$.

The na\"ive approach to this process would be to carefully construct signomials where the infimum $f^\star$ is known by inspection, to compute $f_{\mathsf{SAGE}}$, and then to test if the measured value $|f_{\mathsf{SAGE}} - f^\star|$ is larger than would be possible from rounding errors alone. 
A serious drawback of this approach is that it can be quite difficult to construct $\emat$ and $\vct{c}$ where  $f^\star$ is apparent, and yet $\{ \evec_i : c_i \neq 0 \}$ satisfy the properties for the conjecture under test.

To address this challenge, we appeal to the idea alluded to in Section \ref{sec:SAGE} that SAGE provides a means of computing {\color{revisionBlue}a} \textit{sequence} of lower bounds $(f_{\mathsf{SAGE}}^{(\ell)})_{\ell \in \mathbb{N}}$.
For details on this {``\color{revisionBlue} unconstrained} SAGE hierarchy,'' we refer the reader to \cite{CS16}. For our purposes, suffice it to say that
\[
f_{\mathsf{SAGE}}^{(\ell)} \doteq \sup\{ \gamma : \Sig(\emat, \vct{1})^\ell (f- \gamma) \text{ is SAGE } \}
\]
defines a non-decreasing sequence bounded above by $f^\star$.
Thus, while we cannot readily check if $| f_{\mathsf{SAGE}} - f^\star | \gg 0$, we \textit{can} compute a few values of $f_{\mathsf{SAGE}}^{(\ell)}$ for $\ell > 0$, and check if $|f_{\mathsf{SAGE}}^{(0)} - f_{\mathsf{SAGE}}^{(\ell)}| \gg 0$.

The remainder of this section goes through case studies in which we probe the sensitivity our earlier theorems' conclusions to their stated assumptions. 
All computation was performed {\color{revisionBlue}with a late 2013 MacBook Pro with a 2.4GHz i5 processor,} using \texttt{CVXPY} \cite{cvxpy,cvxpy_rewriting} as an interface to the conic solver \texttt{ECOS} \cite{expConeThesis,ecos}.\footnote{\textcolor{revisionBlue}{Code is hosted at \texttt{github.com/rileyjmurray/sigpy}, and also \texttt{data.caltech.edu/records/1427}.}}
Numerical precision is reported to the farthest decimal point where the primal and dual methods for computing $\sagerelax{f}{\ell}$ agree.

\begin{example}\label{ex:counter_ex1}
    We test here whether it is possible to relax the assumption of simplicial Newton polytope in Theorem \ref{thm:linear_indep_and_all_nonext_neg_is_exact}.
    Since every Newton polytope in $\mathbb{R}$ is trivially simplicial, the simplest signomials available to us are over $\mathbb{R}^2$. 
    With that in mind, consider
    \[
    \emat = 	\begin{bmatrix} 
    0 ~& 2 ~& 1  ~& 0 ~& 0 ~& 2\\
    0 ~& 0 ~& 0  ~& 2 ~& 1 ~& 2
    \end{bmatrix}.
    \]
    This choice of $\emat$ is particularly nice, because were it not for the last column $\evec_6 = [2,~2]^\intercal$, we would very clearly have $\csage{\emat} = \cnns{\emat}$.
    {\color{revisionBlue}We test tested a few values for $\vct{c}$ before finding}
    \[
    \vct{c} = [0, ~3, -4, ~2, -2, ~1]^\intercal,
    \]
    which resulted in $f_{\mathsf{SAGE}}^{(\ell)} \approx -1.83333$, and $f_{\mathsf{SAGE}}^{(\ell)} \approx -1.746505595 = f^\star$.
    Because the absolute deviation $|f_{\mathsf{SAGE}} - f^\star| \approx 0.08682$ is much larger than the precision to which we solved these relaxations, we conclude that $\csage{\emat} \neq \cnns{\emat}$ for this choice of $\emat$.
\end{example}

\begin{example}\label{ex:counter_ex2}
    Let us reinforce the conclusion from Example \ref{ex:counter_ex1}.
    Applying a 180 degree rotation about the point (1,1) to the columns of $\emat$, we obtain
    \[
    \emat = \begin{bmatrix} 
    0 ~& 2 ~& 0 ~& 2 ~& 1 ~& 2 \\
    0 ~& 0 ~& 2 ~& 2 ~& 2 ~& 1   
    \end{bmatrix}.
    \]
    We then choose the {\color{revisionBlue}coefficients} in a manner informed by the theory developed in Section \ref{subsec:dualChar}
    \[
    \vct{c} = [0, ~1, ~1, ~1.9, -2, -2]^\intercal
    \]
    which subsequently defines $f = \Sig(\emat, \vct{c})$.
    In this case the primal formulation for $f_{\mathsf{SAGE}}$ is infeasible, and so $\sagerelax{f}{0} = -\infty$.
    Meanwhile, the second level of the {\color{revisionBlue}unconstrained} hierarchy produces $\sagerelax{f}{1} \approx -0.122211863 = f^\star$. 
    Thus in a very literal sense, the gap $| f_{\mathsf{SAGE}} - f^\star |$ could not be larger.
\end{example}

We know from Theorem \ref{thm:SageEqNng} that any signomial with at most four terms is nonnegative if and only if it is SAGE.
It is natural to wonder if in some very restricted setting (e.g. univariate signomials) the SAGE and nonnegativity cones would coincide for signomials with five or more terms{\color{revisionBlue}; Example \ref{ex:counter_ex3} shows this is not true in general.}

\begin{example}\label{ex:counter_ex3}
    {\color{revisionBlue}For $f = \Sig(\emat,\vct{c})$ with $\emat = [0, ~1, ~2, ~3, ~4]$ and $\vct{c} = [1,-4,~7,-4,~1]^\intercal$, we have
    $\sagerelax{f}{0} \approx -0.3333333$ and $\sagerelax{f}{1} \approx 0.2857720944$.
    Per the affine-invariance invariance properties of the SAGE and nonnegativity cones, this examples shows $\csage{\emat}$ is a strict subset of $\cnns{\emat}$ for every $1 \times 5$ matrix $\emat$ with equispaced values.}
\end{example}

{\color{revisionBlue}Together, Examples \ref{ex:counter_ex1} through \ref{ex:counter_ex3} demonstrate there are meaningful senses in which Theorems \ref{thm:linear_indep_and_all_nonext_neg_is_exact} through \ref{thm:SageEqNng} cannot be improved upon.}

\subsection{A conjecture, under mild regularity conditions}\label{subsec:conjecturecantimprove}

Despite the conclusion in the previous subsection, there \textit{are} settings when we can prove $\csage{\emat} = \cnns{\emat}$ in spite of $\emat$ not satisfying the assumptions of Theorem \ref{thm:SageEqNng}.  For example, one case in which SAGE equals nonnegativity is when $\emat = [\vct{0}, \mtx{I}, \mtx{D}]$ where $\mtx{D}$ is a diagonal matrix with diagonal entries in $(0,1)$.  Here one proves equality as follows: for each possible sign pattern of $\vct{c} \in\cnns{\emat}$, there exists a lower dimensional simplicial face $F$ of $\newpoly{\emat}$ upon which we invoke Theorem \ref{thm:linear_indep_and_all_nonext_neg_is_exact}, 
and for which the remaining exponents (those outside of $F$) have positive coefficients. 
We know that the signomial induced by the exponents outside of $F$ is trivially SAGE, 
and so by Theorem \ref{thm:needNonnegOfFaces} we conclude $\vct{c} \in\csage{\emat}$. 
As this holds for all possible sign patterns on $\vct{c} $ in $\cnns{\emat}$, we have $\csage{\emat} = \cnns{\emat}$. However, this case is somewhat degenerate, and we wish to exclude it in our discussion via some form of regularity on $\emat$.

The most natural regularity condition on $\emat$ would be that it admits only the trivial partition, and indeed we focus on the case when every $\evec_i$ belongs to either $\ext \newpoly{\emat}$ or $\sint \newpoly{\emat}$. 
In this setting we have the following corollary of Theorem \ref{thm:SageEqNng}.

\begin{corollary}\label{cor:FeqPSageEqNng} 
	If $\newpoly{\emat}$ is full dimensional with either
	\begin{enumerate}
		\item at most one interior exponent, or
		\item $n+1$ extreme points and at most two interior exponents
	\end{enumerate}
	then $\csage{\emat} = \cnns{\emat}$. 
\end{corollary}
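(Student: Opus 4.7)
The plan is to invoke Theorem \ref{thm:SageEqNng} with the trivial partition $k=1$, taking $\emat^{(1)} = \emat$ so that the only face under consideration is $\newpoly{\emat}$ itself (an improper face of itself). As the discussion preceding the corollary makes clear, we restrict attention to matrices in which every column $\evec_i$ lies in either $\ext\newpoly{\emat}$ or $\sint\newpoly{\emat}$, so the nonextremal columns of $\emat$ are precisely the interior exponents. The admissibility of the trivial partition as a ``partition into faces'' was already noted immediately after the statement of Theorem \ref{thm:needNonnegOfFaces}, so there is nothing to check on this front.

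In case (1), the sole face $\newpoly{\emat}$ contains at most one nonextremal exponent, which meets condition (2) of Theorem \ref{thm:SageEqNng} regardless of whether $\newpoly{\emat}$ happens to be simplicial. In case (2), full-dimensionality of $\newpoly{\emat}$ in $\R^n$ combined with exactly $n+1$ extreme points forces $\newpoly{\emat}$ to be an $n$-simplex, hence simplicial in the sense used by the paper; the at-most-two interior exponents then fit under condition (1) of Theorem \ref{thm:SageEqNng}. Either way, the theorem's hypotheses are satisfied and the conclusion $\csage{\emat} = \cnns{\emat}$ follows.

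The argument is essentially a bookkeeping specialization of Theorem \ref{thm:SageEqNng}, so no real obstacle is anticipated; the only mildly delicate point is recognizing that the regularity condition (every exponent extremal or strictly interior) is what lets us dispense with the more general ``partition into faces'' machinery and simply use $k=1$.
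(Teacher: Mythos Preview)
Your proposal is correct and matches the paper's approach: the paper simply states this as a corollary of Theorem \ref{thm:SageEqNng} without supplying an explicit argument, and your write-up fills in exactly the intended specialization via the trivial partition $k=1$. Your observation that the ambient regularity assumption (every $\evec_i$ in $\ext\newpoly{\emat}$ or $\sint\newpoly{\emat}$) is what identifies ``interior exponents'' with ``nonextremal exponents'' is the only point that needs unpacking, and you handle it correctly.
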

Along with this corollary, we present a conjecture for the reader's consideration.
\begin{conjecture}\label{conj:FeqPSage_not_EqNng}  
	If $\newpoly{\emat}$ has every $\evec_i$ in either $\ext\newpoly{\emat}$ or $\sint\newpoly{\emat}$, but $\emat$ does not satisfy the hypothesis of Corollary \ref{cor:FeqPSageEqNng}, then $\csage{\emat} \neq \cnns{\emat}$.
\end{conjecture}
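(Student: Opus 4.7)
The plan is to prove the conjecture by exhibiting, for each $\emat$ satisfying the hypothesis, a signomial in $\cnns{\emat} \setminus \csage{\emat}$. The cleanest structural route leverages Theorem \ref{thm:extreme_rays}: every extreme ray of $\csage{\emat}$ is supported on either a singleton or a simplicial circuit, and a simplicial circuit in $\R^n$ has at most $n+2$ elements. Consequently, producing an extreme ray of $\cnns{\emat}$ with support of size $> n+2$ would automatically witness non-equality. Failing that, one can imitate Examples \ref{ex:counter_ex1}--\ref{ex:counter_ex3} directly, certifying $\sagerelax{f}{0} < \sagerelax{f}{1} \le f^\star$ through the unconstrained SAGE hierarchy.

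I would handle two cases matching the negation of Corollary \ref{cor:FeqPSageEqNng}. In Case~1 ($\newpoly{\emat}$ simplicial with $\ge 3$ strictly interior exponents), the goal is to generalize Example \ref{ex:counter_ex3}. By affine invariance of $\csage{\emat}$ and $\cnns{\emat}$, we may normalize $\newpoly{\emat}$ to the standard simplex. If three of the interior exponents happen to be collinear with two vertices, the univariate coefficient pattern $[1,-4,7,-4,1]$ can be pulled back through the corresponding affine embedding: the resulting signomial factors through a single variable $\exp(\vct{v}^\intercal\vct{x})$ and so inherits both nonnegativity and the SAGE-gap from the univariate case. The subcase with no such collinearity requires a genuinely multivariate analogue, perhaps a Motzkin-style form with three interior terms placed in general position.

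In Case~2 ($\newpoly{\emat}$ has $\ge n+2$ vertices and $\ge 2$ strictly interior exponents), I would adapt Examples \ref{ex:counter_ex1} and \ref{ex:counter_ex2}, which treat non-extremal exponents lying on edges rather than in the strict interior. Perturbing those constructions to move the non-extremal exponents into the strict interior and then retuning coefficients --- via the unconstrained SAGE hierarchy together with the dual certificate from Theorem \ref{thm:S_alpha_subset_of_THING_is_exact} --- should yield the required gap. Once a candidate $\vct{c}$ is identified numerically, an analytic proof of nonnegativity combined with the structural obstruction from Theorem \ref{thm:extreme_rays} closes the argument.

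The principal obstacle is the universal quantification over $\emat$: no single formula will work for every admissible configuration, so the proof requires a combinatorial argument identifying a small collection of canonical local substructures that must appear in any hypothesis-meeting $\emat$, together with a scheme for transferring precomputed counter-examples onto each. A second subtle point is that lifting a low-dimensional non-SAGE witness into the full $\emat$ must address whether the \emph{additional} exponents give the SAGE cone enough slack to swallow the lifted signomial; ruling this out will likely require careful use of the relative-entropy representation \eqref{eq:ageRelEnt} together with the sparsity theorem \ref{thm:restrictSS}, so that the extra degrees of freedom in AGE summands cannot conspire to cover the lifted coefficient vector.
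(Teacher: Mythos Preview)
The statement you are attempting to prove is Conjecture~\ref{conj:FeqPSage_not_EqNng}, and the paper does \emph{not} prove it. It is presented explicitly as an open conjecture ``for the reader's consideration,'' supported only by two numerical examples computed with \texttt{CVXPY}/\texttt{ECOS} and a remark that Theorem~\ref{thm:fsageisbounded1pluseps} may pose an obstacle to its resolution. There is therefore no proof in the paper to compare your proposal against.

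Your document is not a proof but a research outline, and you are candid about this: the ``principal obstacle'' paragraph correctly identifies the two genuine gaps. First, the universal quantification over $\emat$ means you would need a finite catalogue of local configurations together with a reduction lemma, and no such classification is offered. Second, and more seriously, the lifting step is where the plan is weakest. Even if a submatrix $\emat'$ of $\emat$ satisfies $\csage{\emat'} \subsetneq \cnns{\emat'}$, the inclusion $\csage{\emat'} \hookrightarrow \csage{\emat}$ (by zero-padding) does not by itself imply $\csage{\emat} \subsetneq \cnns{\emat}$: the additional columns of $\emat$ enlarge both cones, and the SAGE cone may grow enough to absorb the lifted witness. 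Theorem~\ref{thm:restrictSS} constrains the \emph{sign pattern} of AGE summands but does not prevent them from using the extra exponents with positive coefficients, so the sparsity argument you gesture at would need substantial strengthening. The extreme-ray route via Theorem~\ref{thm:extreme_rays} is sound in principle, but producing extreme rays of $\cnns{\emat}$ with prescribed large support is itself an open-ended problem in real algebraic geometry, not something the paper's machinery addresses. In short: your plan is a reasonable sketch of how one might \emph{attack} the conjecture, but it is not a proof, and the paper makes no claim to have one either.
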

Note that when $\emat$ satisfies the stated assumptions and and further has some $\evec_i = \vct{0}$ in the interior, Theorem \ref{thm:fsageisbounded1pluseps} ensures that $f = \Sig(\emat, \vct{c})$ can have $f_{\mathsf{SAGE}}$ deviate from $f^\star$ only by a finite amount.
To overcome a potential obstacle posed by this result in the resolution of {\color{revisionBlue}Conjecture \ref{conj:FeqPSage_not_EqNng}}, one can also consider modifying the hypotheses of the conjecture to require that all $\evec_i$ lie in the relative interior of the Newton polytope.

To finish discussion on Conjecture \ref{conj:FeqPSage_not_EqNng}, we provide empirical support with the following examples.

\begin{example}
    Let $f$ be a signomial in two variables with 
    \begin{equation*}
    \begin{bmatrix} \emat \\ \hline \vct{c}^\intercal \end{bmatrix}  = \begin{bmatrix} 
    0 & 1 & 0 & 0.30 & 0.21 & 0.16 \\
    0 & 0 & 1 & 0.58 & 0.08 & 0.54 \\ \hline
    33.94 & 67.29 & 1 & 38.28 & -57.75 & -40.37  
    \end{bmatrix}.
    \end{equation*}
    Then $f_{\mathsf{SAGE}} = -24.054866 < \sagerelax{f}{1} = -21.31651$. 
    This example provides the minimum number of interior exponents needed to be relevant to Conjecture \ref{conj:FeqPSage_not_EqNng} in the simplicial case.
\end{example}

\begin{example}
     Let $f$ be a signomial in two variables with
    \begin{equation*}
    \begin{bmatrix} \emat \\ \hline \vct{c}^\intercal \end{bmatrix}  = \begin{bmatrix} 
    0 & 1 & 0 & 2 & 0.52 & 1.30 \\
    0 & 0 & 1 & 2 & 0.15 & 1.38 \\ \hline
    0.31 & 0.85 & 2.55 & 0.65 & -1.48 & -1.73  
    \end{bmatrix}.
    \end{equation*}
    then $f_{\mathsf{SAGE}} = 0.00354263 < \sagerelax{f}{1} = 0.13793126$.
    This signomial has the minimum number of interior exponents needed to be relevant to Conjecture \ref{conj:FeqPSage_not_EqNng} in the nonsimplicial case.
\end{example}

\subsection{A dual characterization of SAGE versus nonnegativity}\label{subsec:dualChar}

In this section, we provide a general necessary and sufficient dual characterization in terms of certain moment-type mappings for the question of $\csage{\emat} = \cnns{\emat}$.
{\color{revisionBlue} To establish this dual characterization we use some new notation.
Given two vectors $\vct{u}$, $\vct{v}$ the Hadamard product $\vct{w} = \vct{u} \circ \vct{v}$ has entries $w_i  = u_i v_i$; this is extended to allow sets in either argument in the same manner as the Minkowski sum.
The operator $\mathcal{R}(\cdot)$ returns the range of a matrix.}

We begin with the following proposition (proven in the appendix).
\begin{proposition}\label{prop:tfae}
	If $\emat$ has $\evec_1 = \vct{0}$, then the following are equivalent:
	\begin{itemize}
		\item[1.] For every vector $\vct{c}$, the function $f = \Sig(\emat,\vct{c})$ satisfies $f^\star = f_{\mathsf{SAGE}}$.
		\item[2.] $ \cnns{\emat} = \csage{\emat} $.
		\item[3.] $\{ \vct{v} : v_1 = 1, ~ \vct{v} \mathrm{~in~} \csage{\emat}^\dagger  \} \subset \cl\conv \exp \mathcal{R}(\emat^\intercal) $.
	\end{itemize}
\end{proposition}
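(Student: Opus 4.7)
My plan is to establish $(1) \Leftrightarrow (2)$ directly from the definition of $f_{\mathsf{SAGE}}$, and then $(2) \Leftrightarrow (3)$ via conic duality combined with a moment-type characterization of $\cnns{\emat}^\dagger$.

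For $(1) \Leftrightarrow (2)$, the direction $(2) \Rightarrow (1)$ will follow by substituting $\csage{\emat} = \cnns{\emat}$ into \eqref{eq:defSage0Primal} to recognize $f_{\mathsf{SAGE}} = \sup\{\gamma : f - \gamma \geq 0\} = f^\star$. For $(1) \Rightarrow (2)$, the inclusion $\csage{\emat} \subset \cnns{\emat}$ is automatic, so only the reverse remains. Fixing $\vct{c} \in \cnns{\emat}$, we have $f^\star \geq 0$ and hence $f_{\mathsf{SAGE}} \geq 0$ by $(1)$. Since $\csage{\emat}$ is a closed convex cone, the feasible set $\{\gamma : \vct{c} - \gamma \vct{e}_1 \in \csage{\emat}\}$ is closed and downward-closed, so its supremum $f_{\mathsf{SAGE}}$ is attained, giving $\vct{c} - f_{\mathsf{SAGE}} \vct{e}_1 \in \csage{\emat}$. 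Adding back $f_{\mathsf{SAGE}} \vct{e}_1 \in \R^m_+ \subset \csage{\emat}$ then yields $\vct{c} \in \csage{\emat}$.

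For $(2) \Leftrightarrow (3)$, I would set $T = \{\exp(\emat^\intercal \vct{x}) : \vct{x} \in \R^n\}$ and leverage that $\cnns{\emat} = T^\dagger$ by definition. The bipolar theorem then gives $\cnns{\emat}^\dagger = \cl\cone(\conv T \cup \{\vct{0}\})$. The hypothesis $\evec_1 = \vct{0}$ forces $u_1 = 1$ for every $\vct{u} \in T$, so $\cone\conv T \cap \{v_1 = 1\} = \conv T$, and a short limiting argument transfers this to the closure, producing
\[
\cnns{\emat}^\dagger \cap \{v_1 = 1\} \;=\; \cl\conv T \;=\; \cl\conv\exp\mathcal{R}(\emat^\intercal).
\]
Under $(2)$, dualizing yields $\csage{\emat}^\dagger = \cnns{\emat}^\dagger$ and $(3)$ follows immediately. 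For the converse, I would fix $\vct{c} \in \cnns{\emat}$ and use biduality of the closed cone $\csage{\emat}$ to reduce the problem to checking $\vct{c}^\intercal \vct{v} \geq 0$ for every $\vct{v} \in \csage{\emat}^\dagger$. For $v_1 > 0$, normalize $\vct{v}/v_1 \in \csage{\emat}^\dagger \cap \{v_1 = 1\}$ and invoke $(3)$ together with $\vct{c}^\intercal \exp(\emat^\intercal \vct{x}) = f(\vct{x}) \geq 0$ and continuity of the inner product. For $v_1 = 0$, perturb by $\vct{v} + t\vct{1}$: since $\vct{1} = \exp(\emat^\intercal \vct{0}) \in \cnns{\emat}^\dagger \subset \csage{\emat}^\dagger$, the perturbation remains in $\csage{\emat}^\dagger$ with strictly positive first coordinate, the previous case applies, and taking $t \to 0^+$ concludes.

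The main technical step I anticipate is the identification $\cnns{\emat}^\dagger \cap \{v_1 = 1\} = \cl\conv T$, where the slicing and the closure operators must be shown to commute. The assumption $\evec_1 = \vct{0}$ is precisely what makes this work: it pins the first coordinate on $T$ to the uniform value $1$, ensuring that any sequence in the closed conic hull approaching the slice can be renormalized back into $\conv T$ in the limit.
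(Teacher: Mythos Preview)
Your proof is correct, and it organizes the equivalences differently from the paper. The paper makes condition~(1) the hub: it declares $(2)\Rightarrow(1)$ and $(3)\Rightarrow(1)$ easy, proves $\neg(2)\Rightarrow\neg(1)$ by finding an interior ball in $\cnns{\emat}\setminus\csage{\emat}$ (using that both cones are full-dimensional and closed), and proves $(1)\Rightarrow(3)$ by noting that $f^\star$ and $f_{\mathsf{SAGE}}$ are the support functions of the two sets in~(3) and that identical support functions force identical closed convex sets. You instead make~(2) the hub, establishing $(1)\Leftrightarrow(2)$ via attainment of the supremum in~\eqref{eq:defSage0Primal} and $(2)\Leftrightarrow(3)$ via an explicit bipolar computation $\cnns{\emat}^\dagger=\cl\cone\conv T$ plus the slicing identity $\cnns{\emat}^\dagger\cap\{v_1=1\}=\cl\conv T$. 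Your $(1)\Rightarrow(2)$ is cleaner than the paper's contrapositive---it sidesteps full-dimensionality and the ball-separation step---while the paper's $(1)\Rightarrow(3)$ is slicker, since the support-function argument gets the set equality in one stroke without the bipolar or the $v_1=0$ perturbation case. Your route has the side benefit of isolating the moment-cone identity as a standalone fact and making precise where $\evec_1=\vct{0}$ is used.
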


Our dual characterization consists of two new sets, both parameterized by $\emat$.
The first of these sets relates naturally to the third condition in Proposition \ref{prop:tfae}. 
Formally, the \textit{moment preimage} of some exponent vectors $\emat$ is the set 
\[
T(\emat) \doteq \log\cl\conv\exp \mathcal{R}(\emat^\intercal).
\]
Here, we extend the logarithm to include $\log 0 = -\infty$ in the natural way.
The second set appearing in our dual characterization is defined less explicitly.
For a given $\emat$, we say that $S(\emat)$ is a set of \textit{SAGE-feasible slacks} if $f = \Sig(\emat, \vct{c})$ has
\[
f_{\mathsf{SAGE}} = \inf\{ \vct{c}^\intercal \exp \vct{y} : \vct{y} \text{ in } \mathcal{R}(\emat^\intercal) + S(\emat) \}
\]
for every $\vct{c}$ in $\mathbb{R}^m$.

\begin{theorem}
	Let $\emat$ have $\evec_1 = \vct{0}$, and let $S(\emat)$ be any set of SAGE-feasible slacks over exponents $\emat$. Then $\csage{\emat} = \cnns{\emat}$ iff $S(\emat) \subset T(\emat)$.  \label{thm:S_alpha_subset_of_THING_is_exact}
\end{theorem}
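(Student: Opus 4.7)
The plan is to reduce the theorem, via Proposition \ref{prop:tfae}, to showing that $f_{\mathsf{SAGE}} = f^\star$ for every coefficient vector $\vct{c}$ if and only if $S(\emat) \subset T(\emat)$. Throughout, I will use that $\vct{0} \in \mathcal{R}(\emat^\intercal)$ (so $\vct{1} \in \exp \mathcal{R}(\emat^\intercal) \subset \cl\conv\exp\mathcal{R}(\emat^\intercal)$ and hence $\vct{0} \in T(\emat)$), and the identity $\exp(\vct{a}+\vct{s}) = \exp \vct{a} \circ \exp \vct{s}$, together with the standing fact that
\[
f^\star = \inf\{\,\vct{c}^\intercal \exp \vct{y} \,:\, \vct{y} \in \mathcal{R}(\emat^\intercal)\,\} \quad\text{and}\quad f_{\mathsf{SAGE}} = \inf\{\,\vct{c}^\intercal \exp \vct{y} \,:\, \vct{y} \in \mathcal{R}(\emat^\intercal) + S(\emat)\,\}.
\]

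For the ``if'' direction, I will take any $\vct{s} \in S(\emat) \subset T(\emat)$ and write $\exp \vct{s}$ as a limit of convex combinations $\sum_k \lambda_k^{(r)} \exp \vct{b}_k^{(r)}$ with $\vct{b}_k^{(r)} \in \mathcal{R}(\emat^\intercal)$. For any $\vct{a} \in \mathcal{R}(\emat^\intercal)$ I will then rewrite $\vct{c}^\intercal \exp(\vct{a}+\vct{s})$ as $\lim_r \sum_k \lambda_k^{(r)} \vct{c}^\intercal \exp(\vct{a}+\vct{b}_k^{(r)})$, which is bounded below by $f^\star$ since each $\vct{a}+\vct{b}_k^{(r)} \in \mathcal{R}(\emat^\intercal)$. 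Taking the infimum over $\vct{y} = \vct{a}+\vct{s}$ yields $f_{\mathsf{SAGE}} \geq f^\star$, and the reverse inequality $f_{\mathsf{SAGE}} \leq f^\star$ is immediate from $\vct{0} \in S(\emat) \cap T(\emat)$... actually this needs care because $S(\emat)$ might not contain $\vct{0}$; I will instead rely on the already-known bound $f_{\mathsf{SAGE}} \leq f^\star$ coming from the fact that the primal SAGE problem is a relaxation of the nonnegativity problem, which is implicit in the setup of $f_{\mathsf{SAGE}}$ itself.

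For the ``only if'' direction, I will argue by contrapositive. Suppose $\vct{s} \in S(\emat) \setminus T(\emat)$, so $\exp \vct{s} \notin \cl\conv\exp\mathcal{R}(\emat^\intercal)$. Since the latter is a closed convex subset of $\R^m$ and $\exp \vct{s}$ is a point outside it, a standard strict separating hyperplane result produces a $\vct{c} \in \R^m$ and $\epsilon > 0$ with $\vct{c}^\intercal \vct{u} \geq \vct{c}^\intercal \exp \vct{s} + \epsilon$ for every $\vct{u} \in \cl\conv\exp\mathcal{R}(\emat^\intercal)$. Specializing to $\vct{u} = \exp \vct{y}$ with $\vct{y} \in \mathcal{R}(\emat^\intercal)$ gives $f^\star \geq \vct{c}^\intercal \exp \vct{s} + \epsilon$. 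On the other hand, taking $\vct{a} = \vct{0} \in \mathcal{R}(\emat^\intercal)$ yields $\vct{y} = \vct{s} \in \mathcal{R}(\emat^\intercal) + S(\emat)$, so $f_{\mathsf{SAGE}} \leq \vct{c}^\intercal \exp \vct{s} < f^\star$. By Proposition \ref{prop:tfae} this excludes $\csage{\emat} = \cnns{\emat}$, completing the contrapositive.

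The main obstacle I anticipate is purely formal: making the strict separation step rigorous. I need $\cl\conv\exp\mathcal{R}(\emat^\intercal)$ to be a genuinely closed convex set in $\R^m$ and $\exp \vct{s}$ to lie outside it, which follows directly from $\vct{s} \notin T(\emat) = \log \cl\conv\exp\mathcal{R}(\emat^\intercal)$ because $\log$ is a bijection on $\R_{++}^m$ (and $\exp \vct{s}$ is in the positive orthant, so the log-preimage correspondence behaves cleanly). Aside from this, the argument is a direct combination of Proposition \ref{prop:tfae} with the linearity of the moment representation of $f^\star$ and the convex-combination characterization of $T(\emat)$.
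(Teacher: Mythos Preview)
Your proof is correct and follows essentially the same line as the paper's: both reduce via Proposition~\ref{prop:tfae} to comparing the infimum over $\mathcal{R}(\emat^\intercal)+S(\emat)$ with that over $\mathcal{R}(\emat^\intercal)$, and both hinge on the fact that $U=\mathcal{R}(\emat^\intercal)$ is a linear subspace (your use of $\vct{a}+\vct{b}_k^{(r)}\in U$ and $\vct{0}\in U$ is exactly the paper's ``translation invariance of $T$ under $U$''). The paper packages the argument at the level of sets---identifying $\{\vct{v}:v_1=1,\,\vct{v}\in\csage{\emat}^\dagger\}=\cl\conv\exp(U+S)$ and invoking condition~3 of Proposition~\ref{prop:tfae}---whereas you unpack this into the pointwise convex-combination and strict-separation steps, but the content is the same.
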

\begin{proof}[Proof of Theorem \ref{thm:S_alpha_subset_of_THING_is_exact}]
	To keep notation compact write $U = \mathcal{R}(\emat^\intercal)$ and $S = S(\emat)$.
	Also, introduce $W = \{\vct{v} : v_1 = 1, \vct{v} \text{ in } \csage{\emat}^\dagger \}$ to describe the feasible set to the dual formulation for $f_{\mathsf{SAGE}}$.
	By the supporting-hyperplane characterizations of convex sets, the definitions of $S$ and $W$ ensure
	\[W = \cl\conv\exp(\mathcal{U} + S).\]
	Thus by the equivalence of \textit{1} and \textit{3} in Proposition \ref{prop:tfae}, it follows that all SAGE relaxations will be exact if and only if $\exp(U + S) \subset \cl\conv\exp U$.
	We apply a pointwise logarithm to write the latter condition as $U + S \subset \log\cl\conv\exp U$. 
	
	Now we prove that $T \doteq \log\cl\conv\exp U$ is invariant under translation by vectors in $U$.
	It suffices to show that $\exp(\vct{v} + T ) = \exp T$ for all vectors $\vct{v}$ in $U$.
	Fixing $\vct{v} $ in $U$ we have
	\begin{align*}
	\exp(\vct{v} + T) &= \exp(\vct{v}) \circ \exp(T) \\
	&= \exp(\vct{v}) \circ \cl\conv \exp (U) \\
	&= \cl\conv\exp(\vct{v} + U) \\
	&= \cl\conv\exp(U) = \exp(T)
	\end{align*} 
	as claimed.
	This translation invariance establishes that $U + S \subset \log\cl\conv\exp U$ is equivalent to $S \subset \log\cl\conv\exp U$, and in turn that condition \textit{1} of Proposition \ref{prop:tfae} holds if and only if $S \subset \log\cl\conv\exp U$.
	The claim now follows by the equivalence of \textit{1} and \textit{2} in Proposition \ref{prop:tfae}. 
\end{proof}

It is the authors' hope that Theorem \ref{thm:S_alpha_subset_of_THING_is_exact} may help future efforts to resolve Conjecture \ref{conj:FeqPSage_not_EqNng}.
A starting point in understanding the moment preimage could be to use cumulant generating functions from probability theory.
For constructing sets of SAGE-feasible slacks, one might use a change-of-variables argument similar to that seen in the proof of Theorem \ref{thm:linear_indep_and_all_nonext_neg_is_exact}.

\subsection*{Acknowledgments}
    {\color{revisionBlue}The authors are thankful for the detailed suggestions of anonymous referees, which have led to a much-improved revision of our original manuscript.}
	V.C. would like to acknowledge helpful conversations with Parikshit Shah, particularly on the connections between SAGE and SDSOS polynomials.  R.M. was supported in part by NSF grant CCF-1637598 and by an NSF Graduate Research Fellowship.  V.C. was supported in part by NSF grants CCF-1350590 and CCF-1637598, AFOSR grant FA9550-16-1-0210, and a Sloan Research Fellowship.  A.W. was supported in part by NSF grant CCF-1637598.
\bibliography{sources}  

\section{Appendix}

\subsection{Proof of Lemma \ref{lem:decompose_mixture}}

\decomposeMixture*

\begin{proof}
    {\color{revisionBlue}Denote $\supp_{\mtx{B}}(\vct{\lambda}) = \{ \vct{b}_j \,:\,j \in [d],\, \lambda_j \neq 0 \}$. The proof is constructive, where there is nothing to prove when $\supp_{\mtx{B}}\vct{\lambda}$ is simplicial.
    Suppose then that  $\vct{\lambda} \in \Lambda$ has \textit{nonsimplicial} $\supp_{\mtx{B}}(\vct{\lambda})$.
    We show that it is possible to decompose $\vct{\lambda} = z \vct{\lambda}^{(1)} + (1-z)\vct{\lambda}^{(2)}$ for some $z \in (0, 1)$ and $\vct{\lambda}^{(i)} \in \Lambda$ where $\supp \vct{\lambda}^{(i)} \subsetneq \supp \vct{\lambda}$.
    It should be clear that if this is possible, then the process may be continued in a recursive way if either $\supp_{\mtx{B}}(\vct{\lambda}^{(i)})$ are nonsimplicial, and so the claim would follow.
    
	The statement ``$\vct{\lambda} \in \Lambda$'' means that $\vct{h}$ may be expressed as a convex combination of vectors in $\supp_{\mtx{B}}(\vct{\lambda})$, and so by
	Minkowski-Carath\'{e}odory,} there exists at least one $\vct{\lambda}^{(1)} $ in $ \Lambda_x$ with $\supp \vct{\lambda}^{(1)} \subsetneq \supp \vct{\lambda} $ and simplicial $\supp_{\mtx{B}}(\vct{\lambda}^{(1)})$. We will use $\vct{\lambda}$ and $\vct{\lambda}^{(1)}$ to construct the desired $\vct{\lambda}^{(2)}$ and $z$.
	
	For each real $t$, consider $\vct{\lambda'}_t \doteq \vct{\lambda}^{(1)} + t(\vct{\lambda} - \vct{\lambda}^{(1)})$. 
	It is easy to see that for all $t$ the vector $\vct{\lambda'}_t $ belongs to the affine subspace $ \{\vct{w} : \vct{h} = \mtx{B}\vct{w},~ \mathbf{1}^\intercal \vct{w} = 1\}$,  and furthermore the support of  $\vct{\lambda'}_t $ is contained within the support of $\vct{\lambda}$.
	Now define $T = \max\{t : \vct{\lambda'}_t \text{ in } \Delta_d\}$; we claim that $T > 1$ and that the support of $ \vct{\lambda'}_T$ is a proper subset of the support of $\vct{\lambda}$.
	The latter claim is more or less immediate.
	To establish the former claim consider how $\vct{\lambda'}_t$ (as an affine combination of $\vct{\lambda}^{(1)},\vct{\lambda}$) belongs to $\Delta_d$ if and only if it is elementwise nonnegative.
	This lets us write $T = \max\{ t : \vct{\lambda'}_t \geq \vct{0}   \}$. 
	Next, use our knowledge about the support of $\vct{\lambda'}_t$ to rewrite the constraint ``$\vct{\lambda'}_t \geq \vct{0}$'' as ``${\lambda}_i^{(1)} + t({\lambda}_i - {\lambda}^{(1)}_i) \geq 0 \text{ for all } i \text{ in } \supp \vct{\lambda}$.''
	Once written in this form, we see that for $t = 1$ all constraints are satisfied strictly. 
	It follows that $T > 1$ at optimality, and furthermore that the support of $\vct{\lambda'}_T$ is distinct from (read: \textit{a proper subset of}) that of $\vct{\lambda}$.
	
	We complete the proof by setting $\vct{\lambda}^{(2)} = \vct{\lambda'}_T$ and $z = 1-1/T$. 
\end{proof}

\subsection{Proof of Lemma \ref{lem:ifgisface}}

\partitionLemma*

\begin{proof}[Proof of Lemma \ref{lem:ifgisface}]
    {\color{revisionBlue}Denote $f = \Sig(\emat,\vct{c})$ and $g = \Sig_F(\emat,\vct{c})$.}
	For brevity write $P = \newpoly{\emat}$; we may assume without loss of generality that $P$ contains the origin. 
	If $F = P$ then $g = f$ and the claim is trivial. 
	If otherwise, the affine hull of $F$ must have some positive codimension $\ell$, and there exist supporting hyperplanes $\{S_i\}_{i=1}^\ell$ such that $F = [ \cap_{i=1}^\ell S_i] \cap P$. 
	We can express $S_i$ as $\{\vct{x} :  \vct{s}_i^\intercal \vct{x} = r_i\}$ for a vector $\vct{s}_i $ and a scalar $r_i$. 
	Because $P$ is convex we know that it is contained in one of the half spaces $\{ \vct{x} : \vct{s}_i^\intercal \vct{x}  \leq r_i  \}$ or $\{ \vct{x} : \vct{s}_i^\intercal \vct{x}  \geq r_i \}$. 
	By possibly replacing $(\vct{s}_i,r_i)$ by $(-\vct{s}_i,-r_i)$, we can assume that $P$ is contained in $ \{ \vct{x} : \vct{s}_i^\intercal \vct{x}  \leq r_i  \}$. 
	In addition, the assumption that $\vct{0}$ belongs to $P$ ensures that each $r_i$ is nonnegative.
	Now define $\vct{s} = \sum_{i=1}^\ell \vct{s}_i$ and $r = \sum_{i=1}^\ell r_i \geq 0$. The pair $(\vct{s}, r)$ is constructed to satisfy the following properties:
	\begin{itemize}
		\item For every $\evec_j $ in $ F$, we have $\evec_j^\intercal \vct{s} = r$. 
		\item For every $\evec_j $ \textit{not} in $ F$, we have $\evec_j^\intercal \vct{s} < r$. 
	\end{itemize}
	Finally, define $h = \Sig_{P \setminus F}(\emat,\vct{c})$ so $f = g + h$.
	The remainder of the proof is case analysis on $r$.
	
	If $r = 0$ then we must have $r_i = 0$ for all $i$. 
	The condition that $r_i = 0$ for all $i$ implies that $F$ is contained in a linear subspace $U$ which is orthogonal to $\vct{s}$, and so nonnegativity of $g$ over $\mathbb{R}^n$ reduces to nonnegativity of $g$ over $U$. 
	Suppose then that there exists some $\vct{\hat{x}} $ in $U$ where $g(\vct{\hat{x}})$ is negative. 
	For any vector $\vct{y} $ in the orthogonal complement of $U$ we have $g(\vct{\hat{x}} + \vct{y}) = g(\vct{\hat{x}})$. Meanwhile no matter the value of $\vct{\hat{x}}$ we know that $\lim_{t \to \infty} h(\vct{\hat{x}} + t\vct{s}) = 0$. 
	Using $f^\star \leq \inf\{ f(\vct{\hat{x}} + t\vct{s}) : t \text{ in } \mathbb{R}  \} \leq g(\vct{\hat{x}})$, we have the desired result for $r = 0$: $g^\star < 0$ implies $f^\star < 0$.
	
	Now consider the case when $r$ is positive.
	Define the vector $\vct{\hat{s}} = r \vct{s} / \|\vct{s}\|^2$; we produce an upper bound on $f^\star$ by searching over all hyperplanes $\{\vct{x} :   \vct{\hat{s}}^\intercal \vct{x}  = t \}$ for $t $ in $ \mathbb{R}$. 
	Specifically, for any $\vct{x} $ in $ \mathbb{R}^n$ there exists a scalar $t$ and a vector $\vct{y}$ such that $\vct{x} = t\vct{\hat{s}} + \vct{y}$ and $\vct{\hat{s}}^\intercal \vct{y} = 0$. 
	In these terms we have 
	\begin{align}
	g(t \vct{\hat{s}} + \vct{y}) = \exp( t \|\vct{\hat{s}}\|^2 )  \sum_{\evec_i \in F} c_i \underbrace{\exp( t [\evec_i - \vct{\hat{s}}]^\intercal \vct{\hat{s}})}_{=1 \text{ for all } t} \exp( \evec_i^\intercal \vct{y}) . 
	\end{align}
	{\color{revisionBlue}Hence assuming $g^\star < 0$ means $\sum_{\evec_i \in F} c_i \exp( \evec_i^\intercal \vct{\hat{y}} ) < 0$ for some $\vct{\hat{y}}$ in $\text{Span}(\vct{s})^\perp$.}
	Using this $\vct{\hat{y}}$, one may verify that
	\begin{equation}
	\lim_{t \to \infty}f(t\vct{\hat{s}} + \vct{\hat{y}}) = -\infty,
	\end{equation}
	and so when $r$ is positive, $g^\star < 0$ implies $f^\star < 0$.
\end{proof}

\subsection{Proof of Proposition \ref{prop:strong_duality}}

In Section \ref{sec:SAGE} we asserted strong duality held between \eqref{eq:defSage0Primal} and \eqref{eq:defSage0Dual_conic}.
Here we prove a more general result with two lemmas. In what follows, ``$\mathrm{co}$'' is an operator that computes a set's conic hull.

\begin{lemma}
	Fix a closed convex cone $K$ in $\mathbb{R}^n$. If $\vct{a}$ in $K^\dagger$ is such that 
	\[
	X \doteq \{\vct{x} : \vct{a}^\intercal \vct{x} = 1,~ \vct{x} \text{ in } K  \}
	\]
	is nonempty, then $\cl\cone X = K$.\label{lem:strong_dual_cone}
\end{lemma}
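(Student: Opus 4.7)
The plan is to prove the two inclusions of $\cl\cone X = K$ separately. The inclusion $\cl\cone X \subset K$ is immediate, since $X \subset K$ and $K$ is itself a closed convex cone (hence closed under the operations of taking conic combinations and limits). The substantive direction is $K \subset \cl\cone X$.

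For the reverse inclusion, pick an arbitrary $\vct{y} \in K$. The hypothesis $\vct{a} \in K^\dagger$ gives $\vct{a}^\intercal \vct{y} \geq 0$, so there are two cases to consider. If $\vct{a}^\intercal \vct{y} > 0$, I would simply rescale: the vector $\vct{y}/(\vct{a}^\intercal\vct{y})$ lies in $X$, so $\vct{y}$ is a nonnegative multiple of an element of $X$ and hence lies in $\cone X$ directly, with no closure needed.

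The only mildly delicate case is $\vct{a}^\intercal \vct{y} = 0$, where the rescaling trick is unavailable. Here I would exploit nonemptiness of $X$ by fixing any $\vct{x}_0 \in X$ and perturbing: set $\vct{y}_\epsilon = \vct{y} + \epsilon \vct{x}_0$ for $\epsilon > 0$. Since $K$ is a convex cone, $\vct{y}_\epsilon \in K$, and $\vct{a}^\intercal \vct{y}_\epsilon = \epsilon > 0$, so the previous case applies and $\vct{y}_\epsilon \in \cone X$. Letting $\epsilon \to 0^+$ gives $\vct{y} \in \cl\cone X$, completing the proof.

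I do not anticipate any real obstacles; the argument is essentially a one-line rescaling together with a standard perturbation to cover the boundary $\vct{a}^\intercal \vct{y} = 0$. The only structural points worth highlighting in the write-up are that nonemptiness of $X$ is exactly what licenses the perturbation step, and that $K$ being a cone (so $\epsilon \vct{x}_0 \in K$ for all $\epsilon \geq 0$) is what keeps $\vct{y}_\epsilon$ inside $K$.
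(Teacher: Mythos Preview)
Your proposal is correct and follows essentially the same approach as the paper: both prove $K \subset \cl\cone X$ by rescaling when $\vct{a}^\intercal \vct{y} > 0$ and, in the case $\vct{a}^\intercal \vct{y} = 0$, by perturbing with a fixed $\vct{x}_0 \in X$ and passing to the limit. The only cosmetic difference is that the paper writes the perturbation as $\vct{x}_0 + n\vct{y}$ (landing directly in $X$, then scaling by $1/n$), whereas you write it as $\vct{y} + \epsilon \vct{x}_0$; these are the same sequence up to the reparameterization $n = 1/\epsilon$.
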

\begin{proof}
	Certainly the conic hull of $X$ is contained within $ K$, and the same is true of its closure. 
	The task is to show that every $\vct{x}$ in $K$ also belongs to $\cl\cone X$;
	we do this by case analysis on $b \doteq \vct{a}^\intercal \vct{x}$. 
	
	By the assumptions $\vct{a} \in K^\dagger$ and $\vct{x} \in K$, we must have $b \geq 0$. 
	If $b$ is positive then the scaling $\tilde{\vct{x}} \doteq \vct{x} / b$ belongs to $K$ and satisfies $\vct{a}^\intercal \tilde{\vct{x}} = 1$.
	That is, $b > 0$ gives us $\tilde{\vct{x}}$ in $X$.
	Simply undo this scaling to recover $\vct{x}$ and conclude $\vct{x} \in \cone X$.
	Now suppose $b = 0$
	Here we consider the sequence of points $\vct{y}_n \doteq \vct{x}_0 + n \vct{x}$, where $\vct{x}_0$ is a fixed but otherwise arbitrary element of $X$.
	Each point $\vct{y}_n$ belongs to $K$, and has $\vct{a}^\intercal \vct{y}_n = 1$, hence the $\vct{y}_n$ are contained in $X$.
	It follows that the \textit{scaled points} $\vct{y}_n / n$ are contained in $\cl\cone X$, and the same must be true of their limit $\lim_{n \to \infty}\vct{y}_n / n = \vct{x}$.
	
	Since $\vct{x}$ in $K$ was arbitrary, we have $\cl\cone X = K$. 
\end{proof}

\begin{lemma}
	Let $C$ be a closed convex cone, and fix $\vct{a} \in C^\dagger \setminus \{\vct{0}\}$.
	Then the primal dual pair
	\[
	f_{\mathrm{p}} = \sup\{ \gamma : \vct{c} - \gamma \vct{a} \text{ in } C \}\quad \text{ and } \quad
	f_{\mathrm{d}} = \inf\{ \vct{c}^\intercal \vct{v} : \vct{a}^\intercal\vct{v} = 1,~ \vct{v} \text{ in } C^\dagger \}
	\]
	exhibits strong duality.\label{lem:general_sd}
\end{lemma}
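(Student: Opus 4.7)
The plan is to combine Lemma~\ref{lem:strong_dual_cone} with the bipolar identity $C = C^{\dagger\dagger}$ (valid for closed convex cones) to rewrite primal feasibility as a condition over the dual feasible set, from which strong duality falls out immediately. Weak duality $f_{\mathrm{p}}\leq f_{\mathrm{d}}$ is routine: for any primal-feasible $\gamma$ and dual-feasible $\vct{v}$, the inclusions $\vct{c}-\gamma\vct{a}\in C$ and $\vct{v}\in C^\dagger$ yield $\vct{v}^\intercal(\vct{c}-\gamma\vct{a})\geq 0$, which combined with $\vct{a}^\intercal\vct{v}=1$ rearranges to $\gamma\leq \vct{c}^\intercal\vct{v}$.

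First I would verify that the dual feasible set $X\doteq\{\vct{v}\in C^\dagger : \vct{a}^\intercal\vct{v}=1\}$ is automatically nonempty. The explicit witness $\vct{v}_0 \doteq \vct{a}/\|\vct{a}\|^2$ lies in $C^\dagger$ (since $\vct{a}$ does) and satisfies $\vct{a}^\intercal\vct{v}_0=1$, so $\vct{v}_0 \in X$. I would then apply Lemma~\ref{lem:strong_dual_cone} to the closed convex cone $K=C^\dagger$ to conclude $\cl\cone X = C^\dagger$.

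Using this together with the bipolar identity $C=C^{\dagger\dagger}$, primal feasibility $\vct{c}-\gamma\vct{a}\in C$ is equivalent to requiring $\vct{v}^\intercal(\vct{c}-\gamma\vct{a})\geq 0$ for every $\vct{v}\in C^\dagger = \cl\cone X$. The left-hand side is linear and continuous in $\vct{v}$ and positively homogeneous, so this condition holding on $\cl\cone X$ is equivalent to its holding on $X$ alone; on $X$ it simplifies via $\vct{a}^\intercal\vct{v}=1$ to $\gamma\leq \vct{c}^\intercal\vct{v}$ for every $\vct{v}\in X$, i.e.\ $\gamma\leq f_{\mathrm{d}}$. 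The primal-feasible set in $\gamma$ is therefore $(-\infty, f_{\mathrm{d}}]$ when $f_{\mathrm{d}}$ is finite (and empty if $f_{\mathrm{d}}=-\infty$, in which case weak duality already forces $f_{\mathrm{p}}=-\infty$), yielding $f_{\mathrm{p}}=f_{\mathrm{d}}$ in every case.

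The main subtlety in invoking Lemma~\ref{lem:strong_dual_cone} with $K=C^\dagger$ is that its hypothesis $\vct{a}\in K^\dagger$ reads, after bipolar, as $\vct{a}\in (C^\dagger)^\dagger = C$, slightly stronger than the stated $\vct{a}\in C^\dagger$. This holds in the motivating SAGE application, where $\vct{a}=\vct{e}_1$ lies in $C\cap C^\dagger$ for $C=\csage{\emat}$. Should only the weaker hypothesis $\vct{a}\in C^\dagger$ be available, the reduction "$\vct{v}\in C^\dagger$ to $\vct{v}\in X$" must be supplemented by a short case analysis on the sign of $\vct{a}^\intercal\vct{v}$, exploiting that finiteness of $f_{\mathrm{d}}$ combined with the translates $\vct{v}_0+\lambda\vct{v}\in C^\dagger$ rules out $\vct{c}^\intercal\vct{v}<f_{\mathrm{d}}\cdot\vct{a}^\intercal\vct{v}$ in the zero- and negative-sign cases.
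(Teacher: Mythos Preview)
Your approach matches the paper's: set $K=C^\dagger$ in Lemma~\ref{lem:strong_dual_cone}, extend the inequality $(\vct{c}-f_{\mathrm{d}}\vct{a})^\intercal\vct{v}\geq 0$ from $X$ to $\cl\cone X=C^\dagger$, and conclude via bipolarity that $\vct{c}-f_{\mathrm{d}}\vct{a}\in C$. You are in fact more careful than the paper on one point: you correctly observe that invoking Lemma~\ref{lem:strong_dual_cone} with $K=C^\dagger$ formally requires $\vct{a}\in K^\dagger=(C^\dagger)^\dagger=C$, which is stronger than the stated hypothesis $\vct{a}\in C^\dagger$ of Lemma~\ref{lem:general_sd}; the paper's own proof applies the lemma without addressing this mismatch. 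Your suggested patch via case analysis on the sign of $\vct{a}^\intercal\vct{v}$ (using translates $\vct{v}_0+\lambda\vct{v}\in C^\dagger$ with $\vct{v}_0$ an approximate minimizer in $X$) is sound and closes the gap in full generality, and as you note the issue is moot in the intended SAGE application where $\vct{a}=\vct{e}_1\in C\cap C^\dagger$.
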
	
\begin{proof}
	By assumption that $\vct{a}$ is a nonzero vector in $C^\dagger$, the dual feasible set $\{  \vct{v} ~:~ \vct{a}^\intercal\vct{v} = 1,~ \vct{v} \text{ in } C^\dagger \}$ is nonempty. 
	Since the dual problem is feasible, a proof that $f_{\mathrm{d}} = f_{\mathrm{p}}$ can be divided into the cases $f_{\mathrm{d}} = -\infty$, and $f_{\mathrm{d}}$ in $\mathbb{R}$.
	The proof in former case is trivial; weak duality combined with $f_{\mathrm{p}} \geq -\infty$ gives $f_{\mathrm{d}} = f_{\mathrm{p}}$.
	In the latter case we prove $f_{\mathbf{p}} \geq f_{\mathbf{d}}$ by showing that $\vct{c}^\star \doteq \vct{c} - f_{\mathrm{d}}\vct{a}$ belongs to $C$.
	
	To prove $\vct{c}^\star \in C$ we will appeal to Lemma \ref{lem:strong_dual_cone} with $K \doteq C^\dagger$.
	Clearly the set $X = \{ \vct{v} : \vct{a}^\intercal\vct{v}=1, \vct{v} \text{ in } K  \}$ is precisely the [nonempty] feasible set for computing $f_{\mathrm{d}}$, and so from the definition of $f_{\mathrm{d}}$ we have ${\vct{c}^\star}^\intercal \vct{v} \geq 0 \text{ for all } \vct{v} \text{ in } X$.
	The inequality also applies to any $\vct{v}$ in $\cl\cone X$, which by Lemma \ref{lem:strong_dual_cone} is equal to $K^\dagger$.
	Therefore the definition of $f_{\mathrm{d}}$ ensures $\vct{c}^\star$ is in $K^\dagger$. Using $K^\dagger \equiv C$, we have the desired result. 
\end{proof}

Strong duality in computation of $f_{\mathsf{SAGE}}$ for $f = \Sig(\emat,\vct{c})$ readily follows from Lemma \ref{lem:general_sd}.
Letting $N = \{ i : c_i < 0\}$, simply take $C = \csage{\emat}$ or $C = \sum_{i \in N\cup \{1\}} \cage{\emat,i,N}$, and use $\vct{a} = \vct{e}_1$. What's more, with appropriate bookkeeping one can use Lemma \ref{lem:general_sd} to prove strong duality in computation of $\sagerelax{f}{p}$ for \textit{any} nonnegative integer $p$!

\subsection{Proof of Proposition \ref{prop:tfae}}

\begin{proof}
	The cases $(2) \Rightarrow (1)$ and $(3) \Rightarrow (1)$ are easy.
	
	$\neg (2) \Rightarrow  \neg (1)$. Because $\cnns{\emat}$ and $\csage{\emat}$ are full dimensional closed convex sets, the condition $\csage{\emat} \neq \cnns{\emat}$ implies that $\cnns{\emat} \setminus \csage{\emat}$ has nonempty interior. Assuming this condition, fix a vector $\tilde{\vct{c}}$ and a radius $r$ such that $B(\tilde{\vct{c}},r) \subset \cnns{\emat} \setminus \csage{\emat}$.\footnote{$B(\vct{x},d)$ is $\ell_2$ ball centered at $\vct{x}$ of radius $d$.} This allows us to strictly separate $\tilde{\vct{c}}$ from $\csage{\emat}$, which establishes $f^\star \geq f_{\mathsf{SAGE}} + r > f_{\mathsf{SAGE}}$.
	
	$(1) \Rightarrow (3)$. Now suppose that $f^\star = f_{\mathsf{SAGE}}$ for all relevant $f$. 
	In this case, the function $\vct{c} \mapsto \inf\{ \vct{c}^\intercal \vct{x} : \vct{x} \in \Omega\}$ is the same for $\Omega = \cl \conv \exp \mathcal{R}(\emat^\intercal)$ or $\Omega = \{ \vct{v} : v_1 = 1 \text{ and } \vct{v} \text{ in } \csage{\emat}^\star \}$. 
	This function completely determines the set of all half spaces containing $\Omega$. 
	Since $\Omega$ is closed and convex, it is precisely equal to the intersection of all half spaces containing it; the result follows. 
\end{proof}

\end{document}